\newcommand{\vertiii}[1]{{\left\vert\kern-0.25ex\left\vert\kern-0.25ex\left\vert #1 
		\right\vert\kern-0.25ex\right\vert\kern-0.25ex\right\vert}}
\newtheorem{theorem}{Theorem}
\newtheorem{definition}{Definition}
\newtheorem{lemma}{Lemma}
\newtheorem{remark}{Remark}
\newtheorem{proposition}{Proposition}
\newtheorem{assumption}{Assumption}
\global\long\def\bDelta{\mathbf{\Delta}}
\global\long\def\bPhi{\mathbf{\Phi}}
\global\long\def\H2{\mathcal{H}_2}
\global\long\def\Hinf{\mathcal{H}_{\infty}}
\global\long\def\bK{\mathbf{K}}
\global\long\def\bV{\mathbf{V}}
\global\long\def\cC{\mathcal{C}}
\global\long\def\E1{\mathcal{E}_1}
\global\long\def\trueK{\mathbf{K}_\star}
\begin{document}

	\title{\bf Efficient Learning of Distributed Linear-Quadratic Controllers}
	\author{Salar Fattahi$^1$,  Nikolai Matni$^2$, Somayeh Sojoudi$^1$
		\date{%
			$^1$University of California, Berkeley\\%
			$^2$University of Pennsylvania%
		}
	}

	\maketitle
	
	\begin{abstract}
		In this work, we propose a robust approach to design distributed controllers for unknown-but-sparse linear and time-invariant systems. By leveraging modern techniques in distributed controller synthesis and structured linear inverse problems as applied to system identification, we show that near-optimal distributed controllers can be learned with sub-linear sample complexity and computed with near-linear time complexity, both measured with respect to the dimension of the system. In particular, we provide sharp end-to-end guarantees on the stability and the performance of the designed distributed controller and prove that for sparse systems, the number of samples needed to guarantee robust and near optimal performance of the designed controller can be significantly smaller than the dimension of the system. Finally, we show that the proposed optimization problem can be solved to global optimality with near-linear time complexity by iteratively solving a series of small quadratic programs.
	\end{abstract}

\section{Introduction}\label{sec:problem}
Encouraged by the success of machine learning (ML) techniques applied to complex decision making problems \cite{jordan2015machine} such as image classification \cite{krizhevsky2012imagenet}, video and board games  \cite{atari-nature,alphago,silver2018general}, and robotics \cite{duan2016benchmarking,openAI,Levine16}, the use of ML for the control of autonomous systems interacting with physical environments has been an active area of research in recent years.  While there is an increasing body of work studying the theoretical and practical aspects of deploying learning-enabled control policies in individual systems (e.g., self-driving cars, agile robots) \cite{Levine16,duan2016benchmarking,bojarski2016end,stulp2012reinforcement,recht2019tour}, there has been little work studying the use of these techniques on \emph{distributed systems}, that is to say systems composed of interconnected and often spatially-distributed subsystems. Examples of such distributed systems include intelligent transportation systems and cities, smart grids, and distributed sensor networks.  Even when the individual components are well modeled, controlled, and understood, integrating them into a large-scale, interconnected, and heterogeneous system can make modeling and control of the full system challenging, strongly motivating the use of machine-learning-based techniques.

Extending the application of data-driven techniques to large-scale and safety-critical systems requires overcoming several challenges.  First, we must ensure that the new data-driven methods lead to autonomous systems that are safe, reliable, and robust, as many of our target application areas correspond to safety-critical infrastructure.  Failure of such systems could be catastrophic in terms of both social, economic, and possible human losses.  Second, any proposed learning and control algorithm must scale gracefully to large-scale and potentially spatially distributed systems.  To address these challenges, we extend the approach taken in \cite{dean2017sample} for designing centralized control policies to the \emph{distributed} optimal control of an unknown \emph{distributed} dynamical system.  We develop both deterministic and probabilistic guarantees for a novel robust distributed control synthesis approach. Our proposed method is scalable to large systems, and it allows us to provide the first end-to-end sample complexity guarantees for the distributed optimal control of an unknown system.

In particular, we consider the discrete-time stochastic linear time-invariant system
\begin{align}\label{dyn}
x(t+1) = A_\star x(t)+B_\star u(t)+w(t)
\end{align}
with the state $x(t)\in\mathbb{R}^n$, {state matrix} $A_\star\in\mathbb{R}^{n\times n}$, controllable input $u(t)\in\mathbb{R}^m$, {input matrix} $B_\star\in\mathbb{R}^{n\times m}$, and exogenous random noise $w(t)\in\mathbb{R}^n$ (also referred to as disturbance noise). The goal is to design a control policy $u(t) = f(\{x(\tau)\}_{\tau=0}^t,\{u(\tau)\}_{t=0}^\tau)$ that minimizes the following expected cost function:
\begin{align}\label{cost}
\lim\limits_{T\to\infty}\frac{1}{T}\sum_{t=1}^{T}\mathbb{E}\left\{x(t)^\top Qx(t)+u(t)^\top Ru(t)\right\}
\end{align}
subject to dynamics~\eqref{dyn}, where $Q$ and $R$ are positive-definite matrices. When the system matrices are known and there is no communication constraint on the control policy, this problem reduces to the well-known centralized linear-quadratic regulator (LQR) design for which the static linear policy $u(t) = Kx(t)$ is known to be optimal. The optimality of this control policy is contingent upon the full knowledge of the system matrices, as well as the absence of communication constraints on the structure of the controller. However, these conditions are not satisfied in general, as the system may be subject to unknown dynamics and spatiotemporal constraints as discussed below:

\vspace{1mm}
\noindent\textbf{Unknown dynamics:} In many systems, the exact parameters of the dynamics are not known \textit{a priori}. In particular, rather than having direct access to the system matrices $(A_\star,B_\star)$, we usually only have access to some estimates $(\hat{A},\hat{B})$ obtained from first principles, domain knowledge, or a system identification technique.  Further, in the distributed setting, the \emph{sparsity structure} of these matrices may be unknown as well, due to dynamic interconnections between component sub-systems.  As we describe in the sequel, identifying a structured model is the key to scaling robust and optimal control methods to large systems.

\noindent\textbf{Spatiotemporal constraints:}
Large-scale distributed systems, such as power grids and distributed computing networks, are composed of smaller sub-systems that are locally interconnected according to a physical interaction topology.  Exploiting the underlying sparsity of these systems, as induced by the local interactions between subsystems, is crucial in extending robust and optimal control methods to the distributed setting \cite{wang2016system,2006_Rotkowitz_QI_TAC} by allowing \emph{local} sub-controllers to communicate and coordinate with each other. Furthermore, from a practical perspective,  controllers that can be implemented using \emph{finite impulse response (FIR)} components lead to simple and intuitive implementations \cite{wang2014localized,wang2017separable}.

\vspace{1mm}


\subsection{Contributions}
In this work, we overcome the aforementioned difficulties by leveraging recent advances in control theory and machine learning. Namely, we develop a novel distributed robust control synthesis method using the System Level Synthesis (SLS) framework \cite{wang2016system}, and combine it with model error bounds obtained via the non-asymptotic analysis of regularized estimators as applied to sparse system identification \cite{Salar18,fattahi2019learning}, leading to a method that is efficient \textit{both in sample and computational complexities.}

Given the estimates $(\hat{A},\hat{B})$ of the \textit{true} system matrices $(A_\star,B_\star)$, we are interested in designing a \textit{distributed} controller that can guarantee the stability of the true system with a small optimality gap in its cost function. In particular, given the estimates $\hat{A}$, $\hat{B}$ with an estimation error $\epsilon:=\max\{\|\hat{A}-A_\star\|_2, \|\hat{B}-B_\star\|_2\}$, we propose a method to design a dynamic and linear state-feedback controller $\bK$ that 1) admits a distributed implementation, respecting the spatiotemporal constraints imposed by the underlying communication topology, and 2) is robust against the model uncertainties; in particular, it stabilizes the closed-loop gain $A_\star+B_\star\bK$ and admits a relative sub-optimality bound $J(A_\star, B_\star, \bK)-J_\star\leq \alpha(\epsilon, L)J_\star$ for some positive sub-optimality factor $\alpha(\epsilon, L)$. Here, $J(A_\star, B_\star, \bK)$ is the value of the cost function~\eqref{cost} achieved by the controller  $\mathbf{u} = \bK\mathbf{u}$ acting on the true system, and $J_\star$ is the cost of the \textit{oracle} distributed controller to be formally defined later. Furthermore, $L$ is the enforced temporal length of the obtained system responses with the designed controller. We show that the sub-optimality factor $\alpha(\epsilon, L)$ can be decomposed into two terms:
\begin{align}
\alpha(\epsilon, L) = \alpha_e(\epsilon)+\alpha_t(L)
\end{align}
\begin{sloppypar}
	\noindent	where $\alpha_e(\epsilon)$ bounds the performance degradation caused by \emph{model uncertainty}, and $\alpha_t(L)$ bounds the effect of \textit{temporal truncation}, which quantifies the deviation of the designed controller from its oracle counterpart, when the system responses are restricted to the FIR filters with length $L$. We prove that the uncertainty and truncation errors decay linearly in $\epsilon$ and exponentially in $L$, respectively. Furthermore, by carefully examining the sparsity structure of the estimated system matrices and the controller, we show that under some conditions, these errors \textit{do not} scale with the system dimensions, and instead, they are only dependent on the sparsity structures of the system dynamics and the controller, as well as other spectral characteristics of the system. By combining the derived bounds with the recent high-dimensional system identification techniques \cite{Salar18,fattahi2019learning}, we provide an end-to-end sub-optimality bound on the performance of the designed distributed controller in terms of the number of sample trajectories that are used for estimating the dynamics, as well as the required temporal length of the system responses. Finally, we provide an efficient algorithm with near-linear time complexity to solve the proposed optimization problem. The performance of the presented method is extensively evaluated in different case studies.
\end{sloppypar}

\vspace{2mm}
\noindent{\bf Notation:} Upper- and lower-case letters are used to denote matrices and vectors, respectively. Boldface upper- and lower-case letters refer to transfer matrices and vector-valued signals, respectively. For a matrix $M$, the symbols $\|M\|_2$, $\|M\|_1$, and $\|M\|_{\infty}$ refer to its induced spectral norm, induced norm-1, and maximum absolute value of its elements, respectively. The symbols $\mathcal{H}_2$ and $\mathcal{H}_\infty$ are endowed with the standard definitions of the Hardy spaces, and $\mathcal{RH}_2$ and $\mathcal{RH}_\infty$ correspond to the restriction of these spaces to the set of real, rational, and proper transfer functions. For a transfer matrix $\mathbf{M}\in\mathcal{RH}_\infty$, one can write $\mathbf{M} = \sum_{\tau=0}^{\infty} M(\tau)z^{-\tau}$, where $M(\tau)$ is the $\tau^{\text{th}}$ spectral component of $\bf M$. The notation $x\sim \mathcal{N}(\mu, \Sigma)$ implies that $x$ is a random vector drawn from a Gaussian distribution with mean vector $\mu$ and covariance matrix $\Sigma$. Given a matrix $M$, the symbol $\mathrm{supp}(M)$ refers to a binary matrix that shares the same sparsity pattern as $M$. Finally, given a matrix $M_0$, the set $\mathcal{S}(M_0)$ is defined as $\{M\ |\  \mathrm{supp}(M) = \mathrm{supp}(M_0)\}$.
\subsection{Related Work}
\paragraph{Distributed Control}
Many dynamical systems, such as the power grid, intelligent transportation systems, and distributed computing networks, are large-scale, physically distributed, and interconnected.  In such settings, control systems are composed of several sub-controllers, each equipped with their own sensors and actuators -- these sub-controllers then exchange local sensor measurements and control actions via a communication network.  This information exchange between sub-controllers is constrained by the underlying properties of the communication network, ultimately manifesting as information asymmetry among sub-controllers.  This information asymmetry is what makes distributed optimal controller synthesis challenging \cite{1972_Ho_info,2012_Mahajan_Info_survey,2006_Rotkowitz_QI_TAC,2002_Bamieh_spatially_invariant,2005_Bamieh_spatially_invariant,2013_Nayyar_common_info}---indeed, early negative results gave reason to suspect that the resulting distributed optimal control problems were intractable \cite{1968_Witsenhausen_counterexample,1984_Tsitsiklis_NP_hard}.

However, in the early 2000s, a body of work \cite{2002_Bamieh_spatially_invariant, 2004_QI, 2004_Dullerud, 2005_Bamieh_spatially_invariant, 2006_Rotkowitz_QI_TAC, 2012_Mahajan_Info_survey, 2013_Nayyar_common_info} culminating with the introduction of quadratic invariance (QI) in the seminal paper \cite{2006_Rotkowitz_QI_TAC}, showed that for a large class of practically relevant systems, the resulting distributed optimal control problem is convex.  The identification of QI as a useful condition for determining the tractability of a distributed optimal control problem led to an explosion of synthesis results in this area \cite{2012_Lessard_two_player,2010_Shah_H2_poset,2013_Lamperski_H2,2013_Lessard_structure,2013_Scherer_Hinf,2014_Lessard_Hinf,2014_Matni_Hinf,2014_Tanaka_Triangular,2014_Lamperski_state, fattahi2019transformation}.  These results showed that the robust and optimal control methods that were proven so powerful for centralized systems could be used in distributed settings.  However, they also made clear that the synthesis and implementation of QI distributed optimal controllers did not scale gracefully with the size of the underlying system---indeed, the complexity of computing a QI distributed optimal controller is at least as expensive to compute as its centralized counterpart, and can be more difficult to implement.
This lack of scalability motivated the development of the SLS framework \cite{wang2016system}, which allowed for the convex synthesis of \emph{localized} distributed optimal controllers \cite{wang2014localized,wang2017separable} that enjoyed \emph{order constant} synthesis and implementation complexity. In this paper, we build upon the SLS framework to synthesize an efficient learning-based distributed controller.


\paragraph{System Identification} Estimating system models from input/output experiments has a well-developed theory dating back to the 1960s, particularly in the case of linear and time-invariant systems. Standard reference textbooks on the topic include \cite{aastrom1971system,ljung1999system,chen2012identification,goodwin1977dynamic}, all focusing on establishing \emph{asymptotic} consistency of the proposed estimators.   

On the other hand, contemporary results in statistical learning as applied to system identification seek to characterize \emph{finite time and finite data} rates, leaning heavily on tools from stochastic optimization and concentration of measure.  Such finite-time guarantees provide estimates of both system parameters and their uncertainty, which allows for a natural bridge to robust/optimal control.  
In \cite{dean2017sample}, it was shown that under full state observation, if the system is driven by Gaussian noise, the ordinary least squares estimate of the system matrices constructed from independent data points achieves order optimal rates that are linear in the system dimension. This result was later generalized to the single trajectory setting for (i) marginally stable systems in \cite{simchowitz2018learning}, (ii) unstable systems  in \cite{sarkar2018fast},
and (iii) partially observed stable systems in \cite{oymak2018non,sarkar2019finite,tsiamis2019finite,simchowitz2019learning}.  

In this paper, we leverage recent analogous results for the identification of sparse state-space parameters \cite{fattahi2018data,fattahi2019learning}, where rates are shown to be logarithmic in the ambient dimension, and polynomial in the number of nonzero elements to be estimated.  We note that \cite{fattahi2019learning} builds on \cite{pereira2010learning}, wherein non-asymptotic guarantees on the identification of sparse autonomous dynamical systems are established.

\paragraph{Machine Learning for Continuous Control}  We focus on classical and contemporary results most related to the approach taken in this paper.  The use of learning and adaptation in controller design goes back to Kalman: in particular, self-tuning adaptive control, as pioneered in \cite{kalman1958design,aastrom1973self}, proved to be successful, and was followed by a long sequence of contributions to adaptive control theory, deriving conditions for convergence, stability, robustness and performance under various assumptions.  Contemporary approaches can be viewed as non-asymptotic refinements of these classical problems.  The modern study of adaptive control, as applied to the LQR problem, was initiated in~\cite{abbasi2011regret}, which provided regret bounds for the optimal LQR control of an unknown system.  The work~\cite{abbasi2011regret} uses an Optimism in the Face of Uncertainty (OFU) based approach, where it maintains confidence ellipsoids of system parameters and selects those parameters that lead to the best closed-loop performance. This work was followed up by several refinements and extensions to different settings \cite{russo17,abeille18,ouyang17,abbasi2019model,dean2018regret,mania2019certainty,rantzer2018concentration}, and can all be viewed as model-based reinforcement learning algorithms. Another approach was taken in~\cite{aswani2013provably}, where the authors proposed a learning-based model predictive control (MPC) approach to guarantee the robustness and high performance of an unknown system.

Closest to our work are the results in \cite{dean2017sample}, where the LQR optimal control of an unknown system is studied in the centralized setting.  In \cite{dean2017sample}, the authors propose a two-step procedure.  First, they identify a coarse model of the matrices $(A_\star,B_\star)$ describing system behavior, as well as high-probability bounds on the corresponding model estimate uncertainty.  They then use these model and uncertainty estimates to synthesize a robustly stabilizing controller, and analyze the end-to-end sample complexity of the resulting controller performance. We generalize this approach to distributed settings, by efficiently exploiting the structure of the system both during the identification and control synthesis phase. This in turn allows us to reduce both the sample and computational complexities of learning distributed controllers, as will be described in the sequel.  

\section{Preliminaries on System Level Synthesis}\label{sec:prelim}
\newcommand{\trueA}{A_\star}
\newcommand{\trueB}{B_\star}
\newcommand{\tf}[1]{\mathbf{#1}}
\newcommand{\hinf}{\mathcal{H}_\infty}
\newcommand{\htwo}{\mathcal{H}_2}
\newcommand{\norm}[1]{\left\|#1\right\|}

Given the true system matrices, the optimal centralized LQR controller can be computed by solving its corresponding Ricatti equation~\cite{bertsekas1995dynamic}.  However, as described above, in general the resulting problem becomes highly difficult when solving for a structured controller since it amounts to an NP-hard problem~\cite{tsitsiklis1985complexity}. To circumvent this inherent difficulty,~\cite{wang2016system} introduces the SLS framework, and shows how it can be used to synthesize distributed controllers by optimizing over their induced closed-loop \emph{system responses}.

We motivate this approach via a simple example. Given a static state-feedback control policy $K$, the closed-{loop map from} the disturbance noise $\{w(0), w(1), \dots\}$ to the state $x(t)$ and the control input $u(t)$ at time $t$ is given by
\begin{equation}
\begin{array}{rcl}
x(t) &=& \sum_{\tau=0}^{t} (\trueA + \trueB K)^{\tau}w({t-\tau-1}) \:, \\
u(t) &=& \sum_{\tau=0}^t K(\trueA + \trueB K)^{\tau}w({t-\tau-1}) \:.
\end{array}
\label{eq:impulse-response}
\end{equation}
where, with a slight abuse of notation, the initial state $x(0)$ is denoted by $w(-1)$. Letting $\Phi_x(t) := (\trueA + \trueB K)^{t-1}$ and $\Phi_u(t) := K(\trueA + \trueB K)^{t-1}$, we can rewrite~\eqref{eq:impulse-response} as
\begin{equation}
\begin{bmatrix} x(t) \\ u(t) \end{bmatrix} =
\sum_{\tau=0}^t \begin{bmatrix}\Phi_x(\tau) \\ \Phi_u(\tau) \end{bmatrix}w({t-\tau-1}) \:,
\label{eq:phis}
\end{equation}
where {$\{\Phi_x(t),\Phi_u(t)\}$ are called the \emph{system responses} induced by the controller $K$. The closed-loop system response elements can be defined for a \textit{dynamic} controller in a similar vein. In particular, consider the control policy $\mathbf{u} = \bK\mathbf{x}$ for some dynamic controller $\bK$. Then, the closed-loop transfer matrices from the disturbance noise $\tf w$ to the state $\tf x$ and control action $\tf u$  satisfy

	\begin{equation}
	\begin{bmatrix} \tf x \\ \tf u \end{bmatrix} = \begin{bmatrix} (zI - A-B\tf K)^{-1} \\ \tf K (zI-A-B \tf K)^{-1} \end{bmatrix} \tf w.
	\label{eq:response}
	\end{equation}
	The following theorem parameterizes the set of stable closed-loop transfer matrices, as described in \eqref{eq:response}, that are achievable by any stabilizing controller $\tf K$.
	\begin{theorem}[State-Feedback Parameterization~\cite{wang2016system}]
		The followings are true:
		\begin{itemize}
			\item[-] The affine subspace defined by
			\begin{equation}
			\begin{bmatrix} zI - A & - B \end{bmatrix} \begin{bmatrix} \tf \Phi_x \\ \tf \Phi_u \end{bmatrix} = I, \ \tf \Phi_x, \tf \Phi_u \in \frac{1}{z}\mathcal{RH}_\infty
			\label{eq:achievable}
			\end{equation}
			parameterizes all system responses \eqref{eq:response} from $\tf w$ to $(\tf x, \tf u)$ that are achievable by an internally stabilizing state-feedback controller $\tf K$.
			\item[-] For any transfer matrices $\{\tf \Phi_x, \tf \Phi_u\}$ satisfying \eqref{eq:achievable}, the controller $\tf K = \tf \Phi_u \tf \Phi_x^{-1}$, as implemented in Figure \ref{fig:implementation}, is internally stabilizing and achieves the desired system response \eqref{eq:response}.
		\end{itemize}
		\label{thm:param}
	\end{theorem}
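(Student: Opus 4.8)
The plan is to prove both bullets by direct transfer-function algebra; the only delicate point is the internal-stability claim in the second bullet.

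\textbf{Achievability.} For the forward (``only if'') direction, I would start from an internally stabilizing $\mathbf{K}$, so that by \eqref{eq:response} the induced responses are $\mathbf{\Phi}_x=(zI-A-B\mathbf{K})^{-1}$ and $\mathbf{\Phi}_u=\mathbf{K}(zI-A-B\mathbf{K})^{-1}$. Left-multiplying $[\mathbf{\Phi}_x;\mathbf{\Phi}_u]$ by $[zI-A,\ -B]$ and collecting terms gives $(zI-A-B\mathbf{K})\mathbf{\Phi}_x=I$, which is exactly the affine constraint in \eqref{eq:achievable}; membership in $\mathcal{RH}_\infty$ is the very definition of internal stability of this closed loop, while the extra $\tfrac1z$ (strict properness) is inherited from the resolvent $(zI-A-B\mathbf{K})^{-1}$, equivalently from the one-step delay already visible in \eqref{eq:impulse-response}. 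For the converse I would first record a well-posedness fact: matching the $z^0$ coefficients on both sides of \eqref{eq:achievable} forces the first spectral component of $\mathbf{\Phi}_x$ to equal $I$, so $z\mathbf{\Phi}_x$ is biproper (value $I$ at $z=\infty$); since $z\mathbf{\Phi}_u$ is also proper, $\mathbf{K}:=\mathbf{\Phi}_u\mathbf{\Phi}_x^{-1}=(z\mathbf{\Phi}_u)(z\mathbf{\Phi}_x)^{-1}$ is a well-defined proper (possibly unstable) controller. Substituting $\mathbf{\Phi}_u=\mathbf{K}\mathbf{\Phi}_x$ into \eqref{eq:achievable} gives $(zI-A-B\mathbf{K})\mathbf{\Phi}_x=I$, hence $\mathbf{\Phi}_x=(zI-A-B\mathbf{K})^{-1}$ and $\mathbf{\Phi}_u=\mathbf{K}\mathbf{\Phi}_x$, so $\mathbf{K}$ reproduces the prescribed closed-loop maps \eqref{eq:response} exactly.

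\textbf{Internal stability of $\mathbf{K}=\mathbf{\Phi}_u\mathbf{\Phi}_x^{-1}$.} This is the crux. Because $\mathbf{K}$ need not itself lie in $\mathcal{RH}_\infty$, one cannot deduce internal stability of the interconnection from stability of $\mathbf{K}$; instead I would argue using the specific realization of $\mathbf{K}$ in Figure~\ref{fig:implementation}, whose internal state is driven by the reconstructed disturbance $\hat{\mathbf{w}}$ and never forms $\mathbf{\Phi}_x^{-1}$ in isolation. Injecting test signals $\mathbf{d}_x$ at the plant state update and $\mathbf{d}_u$ at the control input, I would compute the transfer matrices from $(\mathbf{d}_x,\mathbf{d}_u)$ to every signal in the loop ($\mathbf{x}$, $\mathbf{u}$, and the controller state $\hat{\mathbf{w}}$). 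The key observation is that each such map is an \emph{affine} function of $(\mathbf{\Phi}_x,\mathbf{\Phi}_u)$ whose coefficients are built only from the constant matrices $A,B,I$ and the stable operator $\tfrac1z$ --- for instance the map to $(\mathbf{x},\mathbf{u})$ has blocks of the form $\mathbf{\Phi}_x$, $\mathbf{\Phi}_xB$, $\mathbf{\Phi}_u$, $I+\mathbf{\Phi}_uB$ (up to sign conventions) --- and therefore lies in $\mathcal{RH}_\infty$ as soon as $\mathbf{\Phi}_x,\mathbf{\Phi}_u\in\tfrac1z\mathcal{RH}_\infty$. Stability of all closed-loop maps is precisely internal stability, and together with the achievability computation this proves the second bullet.

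\textbf{Main obstacle.} The real work sits in that last step: writing the realization of Figure~\ref{fig:implementation} out explicitly, checking that its internal algebraic loop is well-posed (it is, because $z\mathbf{\Phi}_x-I$ is strictly proper, again a consequence of the first spectral component of $\mathbf{\Phi}_x$ being $I$), and then verifying that \emph{every} closed-loop map --- not merely the nominal $\mathbf{w}\mapsto(\mathbf{x},\mathbf{u})$ map --- is affine in $(\mathbf{\Phi}_x,\mathbf{\Phi}_u)$ with $\mathcal{RH}_\infty$ coefficients. Intuitively, the ``hidden modes'' of this realization are exactly the dynamics encoded by $\mathbf{\Phi}_x$, which are stable by hypothesis, so no unstable mode can be concealed by a pole--zero cancellation; converting that intuition into the explicit transfer-matrix bookkeeping is the only nontrivial part of the proof.
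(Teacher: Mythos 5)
The paper does not prove this theorem---it is imported verbatim from the SLS reference \cite{wang2016system}---so there is no in-paper proof to compare against. Your argument is essentially the standard proof from that reference: the forward direction by direct algebra on the resolvent, the converse by reading off $\Phi_x(1)=I$ from the $z^0$ coefficient to get well-posedness of $\tf K=\tf\Phi_u\tf\Phi_x^{-1}$, and internal stability by injecting perturbations into the realization of Figure~\ref{fig:implementation} and observing that every closed-loop map is affine in $(\tf\Phi_x,\tf\Phi_u)$ with coefficients in $\mathcal{RH}_\infty$. The outline is correct and correctly identifies the one genuinely delicate point (that stability of the nominal $\tf w\mapsto(\tf x,\tf u)$ map alone would not suffice, since $\tf K$ itself need not be stable); the only thing left unwritten is the explicit transfer-matrix bookkeeping for all injection points, which is routine.
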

	
	\begin{figure}
		\centering
		\includegraphics[width=.5\textwidth]{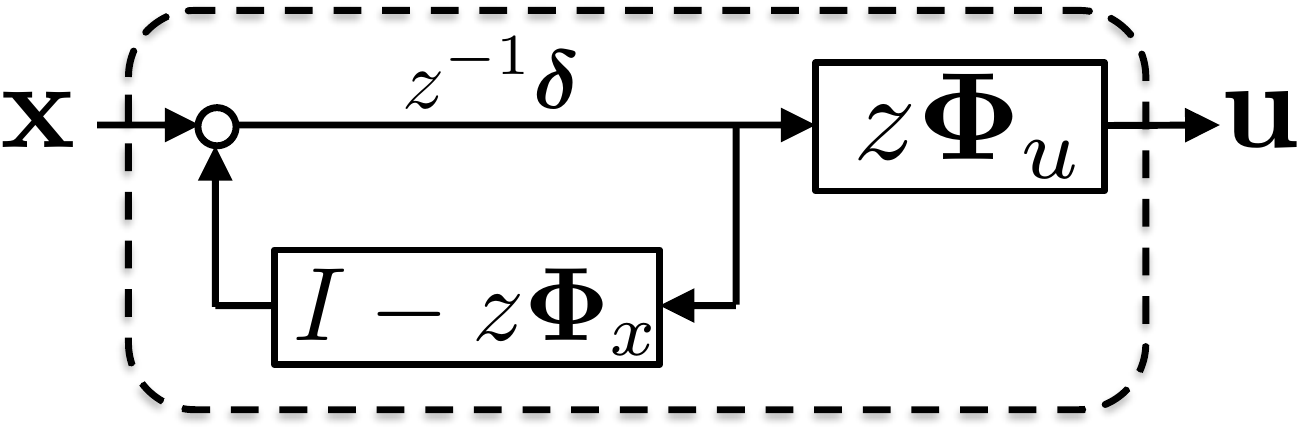}
		\caption{Internally stabilizing realization of the SLS controller specified in Theorem \ref{thm:param}.  Notice that sparsity structure imposed on the system responses $\{\tf \Phi_x, \tf \Phi_u\}$ translates directly to the \emph{internal sparsity structure} of the corresponding controller realization.}
		\label{fig:implementation}
	\end{figure}
	
	We now make two comments on the consequences of Theorem \ref{thm:param}.  
	First, note that $\{\tf \Phi_x, \tf \Phi_u\}=\{ (zI - A-B\tf K)^{-1} , \tf K (zI-A-B \tf K)^{-1} \}$ (as described in~\eqref{eq:response}) are elements of the affine subspace defined by~\eqref{eq:achievable} whenever $\tf K$ is a causal stabilizing controller. It is clear from~\eqref{eq:achievable} that any pair of transfer functions that satisfy~\eqref{eq:achievable} also obey
	\begin{equation}
	\Phi_x(t+1) = \trueA \Phi_x(t) + \trueB \Phi_u(t) \:, \:\: \Phi_x(1) = I \:, \:\: \forall t \geq 1 \:,
	\label{eq:time-achievability}
	\end{equation}
	and hence, satisfy the state-space equation. Furthermore, the above theorem implies that there exists a dynamic controller $\bK$ that achieves these system responses. The SLS framework therefore allows for any optimal control problem over linear systems to be cast as an optimization problem over elements $\{\Phi_x(t),\Phi_u(t)\}$, constrained to satisfy the affine equations~\eqref{eq:time-achievability}. Comparing equations \eqref{eq:impulse-response} and \eqref{eq:phis}, we see that the former is non-convex in the controller $\bK$, whereas the latter is convex in the elements $\{\Phi_x(t),\Phi_u(t)\}$, enabling solutions to previously difficult optimal control problems. 
	
	Second, notice that the realization of the controller $\tf K = \tf \Phi_u \tf \Phi_x^{-1}$ in Figure~\ref{fig:implementation} implies that any sparsity structure imposed on the the system responses translates directly to the internal structure of the corresponding controller.  Therefore, we can synthesize controllers that admit distributed realizations by imposing appropriate structural constraints on the system responses.  For example, if we wish to limit communications between sub-controllers that are first neighbors according to the topology defined by $\emph{A}$, it suffices to impose additional \emph{linear constraints} that the supports of the system responses $\tf \Phi_x$ and  $\tf \Phi_u$ be contained in the support of the matrix $A$. 
	This concept of \emph{locality} in system behavior and corresponding controller implementation is formalized and generalized in \cite{wang2014localized,wang2017separable}, and is the key in scaling robust and optimal control methods to large-scale distributed systems.  
	
	It follows from Theorem \ref{thm:param} and the standard equivalence between infinite horizon LQR and $\htwo$ optimal control that, for a disturbance process $w_t \overset{iid}{\sim{}} \mathcal{N}(0,\sigma_w^2 I)$, the standard LQR problem can be equivalently written as
	\begin{equation}
	\min_{\tf\Phi_x, \tf\Phi_u} \sigma_w^2 \left\|\begin{bmatrix} Q^\frac{1}{2} & 0 \\ 0 & R^\frac{1}{2}\end{bmatrix}\begin{bmatrix} \tf \Phi_x \\ \tf \Phi_u \end{bmatrix}\right\|_{\htwo}^2 \text{ s.t. equation \eqref{eq:achievable}}.
	\label{eq:lqr2}
	\end{equation}
	We drop the $\sigma_w^2$ in the objective function as it affects neither the optimal controller nor the sub-optimality guarantees. 
	
	Finally, we will make extensive use of a robust variant of Theorem \ref{thm:param}.
	\begin{sloppypar}
		\begin{theorem}[Robust Stability~\cite{virtual}]
			Suppose that the transfer matrices $\{\tf\Phi_x, \tf \Phi_u\} \in \frac{1}{z}\mathcal{RH}_\infty$ satisfy
			\begin{equation}
			\begin{bmatrix} zI - A & - B \end{bmatrix} \begin{bmatrix} \tf \Phi_x \\ \tf \Phi_u \end{bmatrix} = I + \tf \Delta.
			\end{equation}
			Then, the controller $\tf K = \tf \Phi_u \tf \Phi_x^{-1}$ stabilizes the system described by $(A,B)$ if and only if $(I+\tf \Delta)^{-1} \in \mathcal{RH}_\infty$.  Furthermore, the resulting system response is given by
			\begin{equation}
			\begin{bmatrix} \tf x \\ \tf u \end{bmatrix} = \begin{bmatrix} \tf \Phi_x \\ \tf \Phi_u \end{bmatrix}(I+\tf \Delta)^{-1} \tf w.
			\end{equation}
			\label{thm:robust}
		\end{theorem}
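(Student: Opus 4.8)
The plan is to reduce both directions of the biconditional to the exact System Level Synthesis parameterization of Theorem~\ref{thm:param}, by absorbing the perturbation into a right factor $(I+\tf{\Delta})^{-1}$ and checking that this manipulation leaves the controller $\tf{K}=\tf{\Phi}_u\tf{\Phi}_x^{-1}$ unchanged; the response formula then falls out for free.

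For the ``if'' direction (and the formula), I would set $\hat{\tf{\Phi}}_x:=\tf{\Phi}_x(I+\tf{\Delta})^{-1}$ and $\hat{\tf{\Phi}}_u:=\tf{\Phi}_u(I+\tf{\Delta})^{-1}$. Assuming $(I+\tf{\Delta})^{-1}\in\mathcal{RH}_\infty$, these products stay in $\frac{1}{z}\mathcal{RH}_\infty$, and right-multiplying the hypothesis $\begin{bmatrix} zI-A & -B \end{bmatrix}\begin{bmatrix}\tf{\Phi}_x \\ \tf{\Phi}_u\end{bmatrix}=I+\tf{\Delta}$ by $(I+\tf{\Delta})^{-1}$ shows that $\{\hat{\tf{\Phi}}_x,\hat{\tf{\Phi}}_u\}$ satisfies the exact achievability constraint~\eqref{eq:achievable}. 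By Theorem~\ref{thm:param}, $\hat{\tf{\Phi}}_u\hat{\tf{\Phi}}_x^{-1}$ is internally stabilizing for $(A,B)$ and realizes the closed-loop response $\begin{bmatrix}\hat{\tf{\Phi}}_x \\ \hat{\tf{\Phi}}_u\end{bmatrix}\tf{w}$. The key algebraic observation is that $\hat{\tf{\Phi}}_u\hat{\tf{\Phi}}_x^{-1}=\tf{\Phi}_u(I+\tf{\Delta})^{-1}(I+\tf{\Delta})\tf{\Phi}_x^{-1}=\tf{K}$, so the conclusion transfers verbatim to $\tf{K}$, with induced response $\begin{bmatrix}\tf{\Phi}_x \\ \tf{\Phi}_u\end{bmatrix}(I+\tf{\Delta})^{-1}\tf{w}$.

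For the ``only if'' direction I would argue in reverse. Substituting $\tf{\Phi}_u=\tf{K}\tf{\Phi}_x$ into the hypothesis collapses it to $(zI-A-B\tf{K})\tf{\Phi}_x=I+\tf{\Delta}$. If $\tf{K}$ stabilizes $(A,B)$, the genuine closed-loop responses $(zI-A-B\tf{K})^{-1}$ and $\tf{K}(zI-A-B\tf{K})^{-1}$ lie in $\frac{1}{z}\mathcal{RH}_\infty$, and the displayed identity reads $\tf{\Phi}_x=(zI-A-B\tf{K})^{-1}(I+\tf{\Delta})$. Reading $I+\tf{\Delta}$ off $\begin{bmatrix} zI-A & -B \end{bmatrix}\begin{bmatrix}\tf{\Phi}_x \\ \tf{\Phi}_u\end{bmatrix}$ directly shows it already belongs to $\mathcal{RH}_\infty$ (each summand is a stable transfer matrix, using $z\tf{\Phi}_x\in\mathcal{RH}_\infty$ since $\tf{\Phi}_x\in\frac{1}{z}\mathcal{RH}_\infty$), so everything reduces to showing $I+\tf{\Delta}$ is a \emph{unit} in $\mathcal{RH}_\infty$. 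This I would obtain by rearranging to $(I+\tf{\Delta})^{-1}=\tf{\Phi}_x^{-1}(zI-A-B\tf{K})^{-1}$ and verifying stability using the normalization $\Phi_x(1)=I$ (from matching constant terms, exactly as in~\eqref{eq:time-achievability}) together with well-posedness of $\tf{K}=\tf{\Phi}_u\tf{\Phi}_x^{-1}$; equivalently, any zero of $I+\tf{\Delta}$ on or outside the unit circle would force an unstable pole into one of the closed-loop maps of $\tf{K}$ acting on $(A,B)$, contradicting stabilization.

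The main obstacle is precisely the bookkeeping in this last direction: unlike the clean substitution in the ``if'' part, one must carefully track properness --- the distinction between $\frac{1}{z}\mathcal{RH}_\infty$, $\mathcal{RH}_\infty$, and merely proper transfer matrices --- and handle inverses of transfer matrices that need not be units, so that the rearrangement isolating $(I+\tf{\Delta})^{-1}$ provably stays in $\mathcal{RH}_\infty$. In effect this amounts to reproving the converse half of Theorem~\ref{thm:param} in the presence of the perturbation $\tf{\Delta}$, which is where essentially all the work lies; the ``if'' direction and the response formula are short corollaries of Theorem~\ref{thm:param} obtained via the similarity-type transformation by $(I+\tf{\Delta})^{-1}$.
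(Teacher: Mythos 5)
The paper does not actually prove Theorem~\ref{thm:robust}; it is imported from the cited reference, so there is no in-paper argument to compare against. Judged on its own, your proposal gets the easy half right and leaves a real gap in the hard half.

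Your ``if'' direction and the response formula are correct and are exactly the standard argument: right-multiplying the perturbed identity by $(I+\bDelta)^{-1}\in\mathcal{RH}_\infty$ yields a pair $\{\bPhi_x(I+\bDelta)^{-1},\,\bPhi_u(I+\bDelta)^{-1}\}\subset\frac{1}{z}\mathcal{RH}_\infty$ satisfying the exact constraint \eqref{eq:achievable}, Theorem~\ref{thm:param} applies, and the telescoping $\bPhi_u(I+\bDelta)^{-1}(I+\bDelta)\bPhi_x^{-1}=\bK$ transfers both stability and the response $\left[\bPhi_x^\top\ \bPhi_u^\top\right]^\top(I+\bDelta)^{-1}\mathbf{w}$ to $\bK$.

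The ``only if'' direction, which you rightly flag as where all the work lies, has two concrete problems as sketched. First, the normalization you lean on is not available: matching constant coefficients in $z\bPhi_x-A\bPhi_x-B\bPhi_u=I+\bDelta$ gives $\Phi_x(1)=I+\Delta(0)$, not $\Phi_x(1)=I$; equation \eqref{eq:time-achievability} holds only when $\bDelta=0$, and nothing in the hypothesis forces $\bDelta$ to be strictly proper. Second, and more fundamentally, your proposed contradiction---that an unstable zero of $I+\bDelta$ ``would force an unstable pole into one of the closed-loop maps of $\bK$ acting on $(A,B)$''---does not follow from input--output stability alone. The maps you can reach that way are $(zI-A-B\bK)^{-1}=\bPhi_x(I+\bDelta)^{-1}$ and $\bK(zI-A-B\bK)^{-1}=\bPhi_u(I+\bDelta)^{-1}$, and an unstable pole of $(I+\bDelta)^{-1}$ can be cancelled by matching zeros of $\bPhi_x$ and $\bPhi_u$, leaving both maps stable. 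The missing ingredient is the \emph{internal} stability analysis of the realization of $\bK$ in Figure~\ref{fig:implementation}: with internal state $\hat{\mathbf{w}}$ satisfying $z\bPhi_x\hat{\mathbf{w}}=\mathbf{x}$ and $\mathbf{u}=z\bPhi_u\hat{\mathbf{w}}$, the closed loop with the plant gives $z(I+\bDelta)\hat{\mathbf{w}}=\mathbf{w}$, so $\frac{1}{z}(I+\bDelta)^{-1}$ appears \emph{by itself} as the closed-loop map from $\mathbf{w}$ to $\hat{\mathbf{w}}$, and internal stability forces $(I+\bDelta)^{-1}\in\mathcal{RH}_\infty$ with no cancellation loophole. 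Without this realization-based step the converse is not established.
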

	\end{sloppypar}
	\section{Main Results}
	
	Following the SLS framework, the following optimization serves as an alternative formulation of the optimal distributed control problem:
	\begin{align}\label{eq22}
	\min_{\bPhi_x, \bPhi_x}&\left\|\begin{bmatrix}
	Q^{1/2} & 0\\
	0 & R^{1/2}
	\end{bmatrix}\begin{bmatrix}
	\bPhi_x\\
	\bPhi_u
	\end{bmatrix}\right\|_{\H2}\\
	\text{s.t.}& \begin{bmatrix}
	zI-{A} & -{B}
	\end{bmatrix}\begin{bmatrix}
	\bPhi_x\\
	\bPhi_u
	\end{bmatrix} = I,\\
	& \bPhi_x\in\frac{1}{z}\mathcal{R}\mathcal{H}_{\infty}\cap\cC_x,\\
	& \bPhi_u\in\frac{1}{z}\mathcal{R}\mathcal{H}_{\infty}\cap\cC_u,
	\end{align}
	where $\cC_x := \{\cC_x(\tau)\}_{\tau = 1}^\infty$ and $\cC_u := \{\cC_u(\tau)\}_{\tau = 1}^\infty$ capture the structural constraints on $\bPhi_x$ and $\bPhi_u$, respectively. In particular, we have $\Phi_x(\tau)\in \cC_x(\tau)$ and $\Phi_u(\tau)\in \cC_u(\tau)$ for every $\tau \in \{1,\dots,\infty\}$. 
	The optimization~\eqref{eq22} is referred to as the \textit{oracle optimization} and its corresponding optimal objective value is called the \textit{oracle cost}. 
	According to Theorem~\ref{thm:param}, the corresponding oracle controller $\bK^\star = \bPhi^\star_u{\bPhi^\star_x}^{-1}$ uniformly asymptotically stabilizes the true system. This together with the fact that for LTI systems, uniform asymptotic stability is equivalent to exponential stability, implies that the system responses are exponentially stable~\cite{wang2016system}. Therefore, upon writing $\bPhi^\star_x = \sum_{t=1}^{\infty}\Phi^\star_x(t)z^{-t}$ and $\bPhi^\star_u = \sum_{t=1}^{\infty}\Phi^\star_u(t)z^{-t}$, there exist constants $C_\star\geq 1$ and $0<\rho_\star<1$ such that
	\begin{equation}\label{stab_opt}
	\max\left\{\|\Phi^\star_x(t)\|_{\infty}, \|\Phi^\star_u(t)\|_{\infty}\right\}\leq C_\star\rho_\star^t
	\end{equation}
	for every integer $t$. 
	
	
	Note that in general the oracle optimization problem~\eqref{eq22} cannot be solved directly as it is infinite dimensional, and requires perfect knowledge of the system matrices $(\trueA,\trueB)$.  To circumvent this issue, we introduce a surrogate  to the oracle optimization that can be solved to robustly design a stabilizing distributed controller based on learned estimates $(\hat{A}, \hat{B})$, taking into account the resulted estimation error. Throughout the paper, $\epsilon$ is used to refer to the spectral norm of the estimation error. In particular, upon defining $\Delta_A = \hat{A}-\trueA$ and $\Delta_B = \hat{B}-\trueB$, we have $\epsilon := \max\{\|\Delta_A\|_{2},\|\Delta_B\|_{2}\}$.   We now recall a robust stability result from \cite{dean2017sample}:

	\begin{lemma}[\cite{dean2017sample}]\label{lem_cost}
		Suppose that the controller $\hat{\bK}$ stabilizes the system defined by the matrices $(\hat{A}, \hat{B})$ and that $(\hat{\bPhi}_x, \hat{\bPhi}_u)$ is its corresponding system response on $(\hat{A}, \hat{B})$. Then, controller $\hat{\bK}$ stabilizes the system defined by the matrices $(\trueA, \trueB)$ if $\|\hat{\bDelta}\|_{\Hinf}<1$, where
		\begin{align}
		\hat{\bDelta} = \begin{bmatrix}
		\Delta_A & \Delta_B
		\end{bmatrix}\begin{bmatrix}
		\hat{\bPhi}_x\\
		\hat{\bPhi}_u
		\end{bmatrix}.
		\end{align}
		Moreover, under this stability condition, one can write
		\begin{align}
		J(\trueA,\trueB,\hat{\bK}) = \left\|\begin{bmatrix}
		Q^{1/2} & 0\\
		0 & R^{1/2}
		\end{bmatrix}\begin{bmatrix}
		\hat{\bPhi}_x\\
		\hat{\bPhi}_u
		\end{bmatrix}\left(I+\hat{\bDelta}\right)^{-1}\right\|_{\H2}
		\end{align}
	\end{lemma}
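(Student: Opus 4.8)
The plan is to reduce the statement to two structural facts already in hand: the SLS parameterization (Theorem~\ref{thm:param}) and the robust-stability characterization (Theorem~\ref{thm:robust}). Since $\hat{\bK}$ stabilizes $(\hat{A},\hat{B})$, Theorem~\ref{thm:param} guarantees that its closed-loop responses satisfy $\hat{\bPhi}_x,\hat{\bPhi}_u\in\RH$ together with $\begin{bmatrix} zI-\hat{A} & -\hat{B}\end{bmatrix}\begin{bmatrix}\hat{\bPhi}_x\\ \hat{\bPhi}_u\end{bmatrix}=I$, and that $\hat{\bK}=\hat{\bPhi}_u\hat{\bPhi}_x^{-1}$. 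Substituting $\hat{A}=\trueA+\Delta_A$ and $\hat{B}=\trueB+\Delta_B$ into this affine identity gives
\begin{equation}
\begin{bmatrix} zI-\trueA & -\trueB\end{bmatrix}\begin{bmatrix}\hat{\bPhi}_x\\ \hat{\bPhi}_u\end{bmatrix}=I+\begin{bmatrix}\Delta_A & \Delta_B\end{bmatrix}\begin{bmatrix}\hat{\bPhi}_x\\ \hat{\bPhi}_u\end{bmatrix}=I+\hat{\bDelta},
\end{equation}
so $(\hat{\bPhi}_x,\hat{\bPhi}_u)$ is precisely an admissible perturbed response for the \emph{true} system $(\trueA,\trueB)$ with mismatch $\hat{\bDelta}$, in the sense required by Theorem~\ref{thm:robust}.

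Next I would apply Theorem~\ref{thm:robust} with $(A,B)=(\trueA,\trueB)$ and $\tf\Delta=\hat{\bDelta}$: the controller $\hat{\bK}=\hat{\bPhi}_u\hat{\bPhi}_x^{-1}$ stabilizes $(\trueA,\trueB)$ if and only if $(I+\hat{\bDelta})^{-1}\in\mathcal{RH}_\infty$, in which case the resulting closed-loop map from $\tf w$ to $(\tf x,\tf u)$ is $\begin{bmatrix}\hat{\bPhi}_x\\ \hat{\bPhi}_u\end{bmatrix}(I+\hat{\bDelta})^{-1}$. It then remains to show that $\|\hat{\bDelta}\|_{\Hinf}<1$ forces $(I+\hat{\bDelta})^{-1}\in\mathcal{RH}_\infty$. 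Because $\Delta_A,\Delta_B$ are constant matrices and $\hat{\bPhi}_x,\hat{\bPhi}_u\in\RH$, the transfer matrix $\hat{\bDelta}$ is rational, strictly proper, and has all its poles strictly inside the unit disk; hence $I+\hat{\bDelta}$ is rational with value $I$ at $z=\infty$, so $(I+\hat{\bDelta})^{-1}$ is again rational and proper. Analyticity and boundedness of $(I+\hat{\bDelta})^{-1}$ on $\{|z|\ge 1\}$ follow from the Neumann expansion $(I+\hat{\bDelta})^{-1}=\sum_{k\ge 0}(-\hat{\bDelta})^k$, which converges in $\Hinf$-norm since $\Hinf$ is a Banach algebra under this norm and $\|\hat{\bDelta}\|_{\Hinf}<1$; this excludes poles on and outside the unit circle, so $(I+\hat{\bDelta})^{-1}\in\mathcal{RH}_\infty$ and $\hat{\bK}$ stabilizes the true system.

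For the cost identity I would combine the closed-loop formula just obtained with the $\H2$ reformulation of the infinite-horizon LQR cost in \eqref{eq:lqr2}: applying $\hat{\bK}$ to $(\trueA,\trueB)$ produces the response $\begin{bmatrix}\hat{\bPhi}_x\\ \hat{\bPhi}_u\end{bmatrix}(I+\hat{\bDelta})^{-1}$, and therefore (consistently with the normalization of $J$ used in the lemma) $J(\trueA,\trueB,\hat{\bK})$ equals the $\H2$ norm of the weighted response $\begin{bmatrix}Q^{1/2} & 0\\ 0 & R^{1/2}\end{bmatrix}\begin{bmatrix}\hat{\bPhi}_x\\ \hat{\bPhi}_u\end{bmatrix}(I+\hat{\bDelta})^{-1}$, which is exactly the claimed expression.

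I expect the only delicate point to be the implication $\|\hat{\bDelta}\|_{\Hinf}<1\Rightarrow(I+\hat{\bDelta})^{-1}\in\mathcal{RH}_\infty$: one must verify genuine membership in $\mathcal{RH}_\infty$ — rationality from inverting a rational matrix, properness from the strict properness of $\hat{\bDelta}$, and the absence of unstable poles from the convergent Neumann series — rather than merely an $\Hinf$-boundedness bound. Everything else is a direct substitution followed by invocations of Theorems~\ref{thm:param} and~\ref{thm:robust}.
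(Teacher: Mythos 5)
The paper states this lemma without proof, importing it directly from \cite{dean2017sample}; your reconstruction is correct and follows exactly the standard argument used there — substitute $\trueA=\hat{A}-\Delta_A$, $\trueB=\hat{B}-\Delta_B$ into the achievability identity for $(\hat{A},\hat{B})$, invoke the robust-stability characterization (Theorem~\ref{thm:robust}) with $\tf\Delta=\hat{\bDelta}$, and close the gap between $\|\hat{\bDelta}\|_{\Hinf}<1$ and $(I+\hat{\bDelta})^{-1}\in\mathcal{RH}_\infty$ via the Neumann series together with rationality and strict properness of $\hat{\bDelta}$. The cost identity then follows from the closed-loop response formula and the $\H2$ reformulation of the LQR cost, as you say; no gaps.
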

	
	Following~\cite{dean2017sample}, we design a near-optimal distributed controller by solving the following robust counterpart of the oracle optimization problem~\eqref{eq22} based on the estimated values of $(\hat{A},\hat{B})$ with a given estimation error $\epsilon$:
	\begin{align}\label{eq1}
	\min_{\bPhi_x, \bPhi_x}\underset{\begin{subarray}
		.\|\Delta_A\|_{2}\leq\epsilon,\\ \|\Delta_B\|_{2}\leq \epsilon
		\end{subarray}}{\max}&\left\|\begin{bmatrix}
	Q^{1/2} & 0\\
	0 & R^{1/2}
	\end{bmatrix}\begin{bmatrix}
	{\bPhi}_x\\
	{\bPhi}_u
	\end{bmatrix}\left(I+\begin{bmatrix}
	\Delta_A & \Delta_B
	\end{bmatrix}\begin{bmatrix}
	{\bPhi}_x\\
	{\bPhi}_u
	\end{bmatrix}\right)^{-1}\right\|_{\H2}\\
	\text{s.t.} &\begin{bmatrix}
	zI-\hat{A}+\Delta_A & -\hat{B}+\Delta_B
	\end{bmatrix}\begin{bmatrix}
	\bPhi_x\\
	\bPhi_u
	\end{bmatrix} = I,\\
	& \bPhi_x\in\frac{1}{z}\mathcal{R}\mathcal{H}_{\infty}\cap\cC_x,\quad \bPhi_u\in\frac{1}{z}\mathcal{R}\mathcal{H}_{\infty}\cap\cC_u,
	\end{align}
	The above optimization seeks to find a stabilizing distributed controller that minimizes the worst-case performance achieved on the true system, given the estimates $(\hat{A},\hat{B})$, and the estimation error $\epsilon$. Clearly, this problem is equivalent to its oracle analog if $\epsilon = 0$. However, notice that the above optimization is infinite-dimensional and non-convex in its current form. 
	To deal with its non-convexity,~\cite{dean2017sample} introduces the following surrogate:
	\begin{align}\label{eq2}
	\min_{\gamma\in[0,1)}&\frac{1}{1-r}\min_{\bPhi_x, \bPhi_x}\left\|\begin{bmatrix}
	Q^{1/2} & 0\\
	0 & R^{1/2}
	\end{bmatrix}\begin{bmatrix}
	\bPhi_x\\
	\bPhi_u
	\end{bmatrix}\right\|_{\H2}\\
	\text{s.t.} & \begin{bmatrix}
	zI-\hat{A} & -\hat{B}
	\end{bmatrix}\begin{bmatrix}
	\bPhi_x\\
	\bPhi_u
	\end{bmatrix} = I,\,
	\left\|\begin{bmatrix}
	\frac{\epsilon_A}{\sqrt{\alpha}}\bPhi_x\\
	\frac{\epsilon_B}{\sqrt{1-\alpha}}\bPhi_u
	\end{bmatrix}\right\|_{\Hinf}\leq \gamma,\label{robust}\\
	& \begin{bmatrix}
	\bPhi_x\\
	\bPhi_u
	\end{bmatrix} \in\frac{1}{z}\mathcal{R}\mathcal{H}_{\infty}\cap\cC
	\end{align}
	where $\epsilon_A = \|\hat{A}-\trueA\|_2$ and $\epsilon_B = \|\hat{B}-\trueB\|_2$. It can be easily verified that the above optimization is jointly quasi-convex in $\gamma$ and $(\bPhi_x, \bPhi_u)$. Therefore, upon restricting $(\bPhi_x, \bPhi_u)$ to FIR responses, it can be solved in polynomial time to an arbitrary accuracy. In the absence of sparsity constraints,~\cite{dean2017sample} shows that the above problem gives rise to a robust controller that stabilizes the true system for sufficiently small $\epsilon_A$ and $\epsilon_B$. Moreover,~\cite{dean2017sample} characterizes the gap between the cost of the derived and optimal LQR controllers, and shows that the gap scales as $O(\epsilon_A + \epsilon_B)$. However, care must be taken when extending this approach to the distributed setting:
	
	\noindent{\it 1. Sparsity constraints:} The derived bound on the performance of the synthesized controller in~\cite{dean2017sample} is only valid if there are no sparsity constraints on the system responses. 
	
	\noindent{\it 2. Computational complexity:} As mentioned before, the above optimization is infinite dimensional and hence, intractable to solve. With the goal of reducing~\eqref{eq2} to a finite-dimensional problem,~\cite{dean2017sample} proposes to restrict $(\Phi_x,\Phi_u)$ to FIR responses with length $L$. With this assumption,~\cite{dean2017sample} shows that for a fixed $\gamma$, the inner optimization in~\eqref{robust} can be represented as a semidefinite programming (SDP) with the size $L(n+m)+n$. Moreover,~\cite{dean2017sample} introduces a gridding method to search for the optimal value of $\gamma$ over the interval $[0,1)$. Considering the expensive computational complexity of the available SDP solvers,~\eqref{eq2} quickly becomes prohibitive to solve as the system dimension and/or the length of the FIR responses grow. In particular, using an interior point method~\cite{nesterov1994interior} to solve the inner SDP for every $\gamma$, the proposed algorithm in~\cite{dean2017sample} has the time complexity $\mathcal{O}\left(\left(L(n\!+\!m)\!\right)^{6.5} \frac{1}{\eta}\log\left(\frac{1}{\eta}\right)\right)$ to obtain an $\eta$-accurate solution.
	
	
	\noindent{\it 3. Sample complexity:} Combined with the proposed least-squares estimation method in~\cite{dean2017sample}, the minimum number of sample trajectories to accurately estimate the system matrices scales linearly in the system dimension. This linear dependency makes the accurate estimation impractical, if not impossible, as the system size scales up---this is because no \emph{a priori} knowledge of sparsity in the underlying system is exploited.
	
	In this paper, we will remedy all of the aforementioned issues by introducing a scalable surrogate to the robust optimization problem~\eqref{eq1} with provable optimality guarantees.
	\subsection{Tractable Surrogates}
	We now show how the underlying sparse structure of the system matrices $(\trueA,\trueB)$ and distributed controller can be exploited to develop a tractable and scalable convex surrogate to optimization problem \eqref{eq1}.
	
	Consider the sequence $\cC_v := \{\cC_v(\tau)\}_{\tau = 1}^\infty$, where $\cC_v(0) = \{X|X\in\mathcal{S}(I_n)\}$ and 
	\begin{align}\label{Cv}
	\cC_v(\tau) = \{X_1X_2 + X_3X_4| X_1\in\mathcal{S}(\hat{A}), X_2\in \cC_x(\tau), X_3\in\mathcal{S}(\hat{B}), X_4\in \cC_u(\tau)\}
	\end{align}
	for every $\tau = 1,\dots,\infty$. Assuming that $(\hat{A},\hat{B})$ and $(\trueA,\trueB)$ share the same sparsity pattern, consider the following optimization problem:
	
	\begin{subequations}\label{opt_f}
		\begin{align}
		\min_{\gamma\in[0,1)}\frac{1}{1-\gamma}\underset{\begin{subarray}	.V(0:L)\\
			\Phi_x(1:L)\\ 
			\Phi_u(1:L)
			\end{subarray}}{\min}&\sqrt{\sum_{t=1}^{L}\left\|\begin{bmatrix}
			Q^{1/2} & 0\\
			0 & R^{1/2}
			\end{bmatrix}\begin{bmatrix}
			\Phi_x(t)\\
			\Phi_u(t)
			\end{bmatrix}\right\|^2_{F}}\\
		\mathrm{s.t.}\ &\Phi_x(1) = I+V(0)\label{opt_f_1}\\
		&\Phi_x(t+1) = \hat{A}\Phi_x(t)+\hat{B}\Phi_u(t)+V(t) && t = 1,\dots,L-1\label{opt_f_2}\\
		&0 = \hat{A}\Phi_x(L)+\hat{B}\Phi_u(L)+V(L)\label{opt_f_3}\\
		&\sum_{t=1}^{L}\left\|\begin{bmatrix}
		\bar\epsilon\Phi_x(t)\\
		\bar\epsilon\Phi_u(t)
		\end{bmatrix}_{:,j}\right\|_1\leq\alpha k_{\phi}^{-1/2}\gamma && j=1,\dots,n\label{opt_f_4}\\
		&\sum_{t=0}^{L}\left\|V_{:,j}(t)\right\|_1\leq({1-\alpha}) k_v^{-1}\gamma && j=1,\dots,n\label{opt_f_5}\\
		&\Phi_x(t)\in\cC_x(t),\quad\Phi_u(t)\in\cC_u(t)  && t = 1,\dots,L\label{opt_f_6}\\
		& V(t)\in\cC_v(t) && t = 0,\dots,L\label{opt_f_7}
		\end{align}
	\end{subequations}
	Here, $\alpha\in(0,1)$ is a parameter to be tuned. Furthermore, $\bar{\epsilon}$ is an upper bound on the spectral norm of the true estimation error $\epsilon$, i.e., $\bar{\epsilon}\geq \epsilon$. Later, we will show how to obtain such upper bound directly from the sample trajectories via bootstrapping. The scalar $k_{\phi}$ corresponds to the maximum number of nonzero elements in different rows and columns of $\begin{bmatrix}
	\bPhi_x^\top & \bPhi_u^\top
	\end{bmatrix}^\top$. Similarly, $k_v$ denotes the maximum number of nonzero elements in different rows and columns of $\bV$; we will explain later how to obtain $k_v$ based on the imposed sparsity patterns of the system responses. 
	Let a globally optimal solution of the above optimization be denoted by $(\bPhi_x^L, \bPhi_u^L, \bV^L,\gamma^L)$. The inner optimization problem of~\eqref{opt_f} can be written as a parametric QP with respect to $\gamma$ and is denoted by $\mathrm{OPT}(\gamma)$, whose optimal objective value is referred to as $g(\gamma)$. It is easy to see that $g(\gamma)$ is defined over the domain $[\gamma_0,+\infty)$ for some $\gamma_0\geq0$, and is monotonically decreasing.
	
	\begin{sloppypar}
			We will discuss a number of key properties of this problem. First, notice that the optimization is over only the first $L$ components of the system responses, thus yielding a finite-dimensional approximation of the previous infinite-dimensional problem.  The slack variables $V(0),V(1),\dots, V(L)$ are used to capture the error incurred by this truncation. In Theorem~\ref{thm_FIR}, we show that the approximation error incurred by restricting our optimization to the first $L$ system response elements decays exponentially with respect to $L$. Moreover, as will be shown in Lemma~\ref{lem_Cv}, the supports of the introduced slack variables are only slightly larger than those of the  system responses. Therefore, if the computed system responses are sparse, so are the slack variables. This will in turn help reduce the number of variables in the problem, thereby resulting in a significant computational saving. Finally, a close comparison between~\eqref{opt_f} and~\eqref{eq2} reveals that the constraint imposed on the $\Hinf$-norm of the system responses in the latter is replaced by induced norm-1 constraints on the system response elements and the slack variables. Considering the fact that these constraints can be represented as linear inequalities, we will later show how to efficiently decompose the proposed optimization problem into a series of small and independent QPs.
	\end{sloppypar}

	
	The next lemma characterizes the sparsity structure of the set $\cC_v$. To simplify notation, $k$ will be used to denote the maximum number of nonzero elements of every row and column of $\begin{bmatrix}
	\trueA & \trueB
	\end{bmatrix}$ and feasible $\begin{bmatrix}
	\Phi_x^\top(\tau) & \Phi_u^\top(\tau)
	\end{bmatrix}^\top, \tau = 1,\dots, L$. Furthermore, we will drop the scripts from a time-dependent sequence $\{M(\tau)\}_{\tau=t_1}^{t_2}$ whenever they are implied by the context.
	\begin{lemma}\label{lem_Cv}
		The following statements hold:
		\begin{itemize}
			\item[1.] The maximum number of nonzero elements in the rows or columns of every ${M}\in\cC_v$ is upper bounded by $2k^2$.
			\item[2.] The equality $\cC_v(\tau) = \mathcal{S}(P_1P_2+P_3P_4)$ is satisfied for every $\tau = 1,\dots, L$, where $P_1 = \mathrm{supp}(\hat{A})$ and $P_3 = \mathrm{supp}(\hat{B})$. Furthermore, $P_2$ and $P_4$ are binary matrices with the maximum number of nonzero elements that satisfy $P_2\in C_x(\tau)$ and $P_4\in C_u(\tau)$. 
		\end{itemize}
	\end{lemma}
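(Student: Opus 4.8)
The plan is to prove both statements by propagating sparsity patterns through the matrix products that define $\cC_v$. Set $P_1=\mathrm{supp}(\hat A)$ and $P_3=\mathrm{supp}(\hat B)$; by the standing assumption these equal $\mathrm{supp}(A_\star)$ and $\mathrm{supp}(B_\star)$, and I take $P_2,P_4$ to be the ($\subseteq$-maximal) binary matrices in the structural constraint sets $\cC_x(\tau),\cC_u(\tau)$, which are sparsity subspaces. The first thing I would record is a reduction of $k$ to the individual blocks: since a row of $\begin{bmatrix}A_\star & B_\star\end{bmatrix}$ is the concatenation of a row of $A_\star$ with a row of $B_\star$, a column of it is a column of one of the two, and analogously for $\begin{bmatrix}\Phi_x^\top(\tau) & \Phi_u^\top(\tau)\end{bmatrix}^\top$, each of $\hat A$, $\hat B$, $\Phi_x(\tau)$, $\Phi_u(\tau)$ — and hence each of the patterns $P_1,P_2,P_3,P_4$ — has at most $k$ nonzeros in every row and in every column.

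Next I would prove Statement~2 by double inclusion. For $\cC_v(\tau)\subseteq\mathcal{S}(P_1P_2+P_3P_4)$: if $M=X_1X_2+X_3X_4$ with the prescribed supports and $(X_1X_2)_{ij}\neq 0$, then $(X_1)_{i\ell}(X_2)_{\ell j}\neq 0$ for some $\ell$, so $(P_1P_2)_{ij}\geq 1$; thus $\mathrm{supp}(X_1X_2)\subseteq\mathrm{supp}(P_1P_2)$, $\mathrm{supp}(X_3X_4)\subseteq\mathrm{supp}(P_3P_4)$, and since all factors are nonnegative, $\mathrm{supp}(M)\subseteq\mathrm{supp}(P_1P_2+P_3P_4)$. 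For the reverse I would exhibit choices of $X_1,X_2,X_3,X_4$ supported on all of $P_1,P_2,P_3,P_4$ whose sum realizes the full pattern: each entry of $X_1X_2+X_3X_4$ indexed by $\mathrm{supp}(P_1P_2+P_3P_4)$ is a polynomial in the entries of these factors that is not identically zero, hence nonzero for generic values, so every such coordinate is attained and the sparsity subspace generated by $\cC_v(\tau)$ is exactly that of $P_1P_2+P_3P_4$.

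With the pattern of $\cC_v(\tau)$ in hand, Statement~1 is a counting argument (and can also be run directly from the definition). For a column index $j$, write $(X_1X_2)_{:,j}=\sum_{\ell\in\mathcal{I}}(X_2)_{\ell j}\,(X_1)_{:,\ell}$, where $\mathcal{I}$ is the support of column $j$ of $X_2$, so $|\mathcal{I}|\leq k$ and each column $(X_1)_{:,\ell}$ of $\hat A$ has at most $k$ nonzeros; hence $(X_1X_2)_{:,j}$ has at most $k^2$ nonzeros, and the same for $(X_3X_4)_{:,j}$, giving at most $2k^2$ for column $j$ of $M$. The symmetric expansion $(X_1X_2)_{i,:}=(X_1)_{i,:}X_2$ over the $\leq k$ support of row $i$ of $\hat A$, together with rows of $\Phi_x(\tau)$ having $\leq k$ nonzeros, gives the same bound for rows; and $\tau=0$ is trivial because $\cC_v(0)=\mathcal{S}(I_n)$ has row/column sparsity $1\leq 2k^2$.

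I expect the only genuine subtlety to be the reverse inclusion in Statement~2: read literally with $\mathcal{S}(\cdot)$ an exact support set, cancellations (taking $X_2=X_4=0$, say) obstruct equality, so the statement should be understood as identifying the smallest sparsity subspace containing $\cC_v(\tau)$ — which is precisely what the genericity argument delivers, and which is all that is needed when the constraint $V(t)\in\cC_v(t)$ is later imposed as the linear constraint $\mathrm{supp}(V(t))\subseteq\mathrm{supp}(P_1P_2+P_3P_4)$. Everything else is routine bookkeeping of supports through two matrix products.
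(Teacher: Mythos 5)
Your proof is correct and takes the same approach as the paper, whose entire argument is the one-line assertion that both statements are ``immediately implied by the sparsity patterns of $\hat{A}$, $\hat{B}$, and the elements of $\cC_x(\tau)$ and $\cC_u(\tau)$''; your support-propagation counting for Statement~1 and double inclusion for Statement~2 are precisely the elided details. Your caveat about Statement~2 is also well taken: with the paper's literal definition $\mathcal{S}(M_0)=\{M\,|\,\mathrm{supp}(M)=\mathrm{supp}(M_0)\}$, cancellations in $X_1X_2+X_3X_4$ can produce elements of $\cC_v(\tau)$ with strictly smaller support, so the equality must be read (as you do, and as the later use of the constraint $V(t)\in\cC_v(t)$ requires) as identifying the smallest sparsity subspace containing $\cC_v(\tau)$, with genericity supplying the reverse inclusion.
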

	\begin{proof}
		The proofs of both statements are immediately implied by the sparsity patterns of $\hat{A}$, $\hat{B}$, and the elements of $\cC_x(\tau)$ and $\cC_u(\tau)$.
	\end{proof}
	Since $P_1$, $P_2$, $P_3$, and $P_4$ are sparse matrices, Lemma~\ref{lem_Cv} implies that $\{\cC_v(\tau)\}$ can be efficiently characterized by sparse matrix multiplication and summation. 
	
	\subsection{Optimality gap} In this subsection, we analyze the performance of the controller derived from~\eqref{opt_f}. 
	The following is the first main theorem of the paper.
	
	
	\begin{theorem}\label{thm_FIR}
		Let $J_\star$ be the oracle cost and $(\gamma^L,\bPhi_x^L,\bPhi_u^L)$ be the optimal solution of~\eqref{opt_f}. Suppose that $\hat{A}$ and $\hat{B}$ have the same sparsity structure as $\trueA$ and $\trueB$, and that
		\begin{align}\label{cond}
		\bar\epsilon<\frac{(1-\rho_\star)\min\{\alpha,1-\alpha\}}{32C_\star\rho_\star} k^{-2}, \quad L> \frac{2\log(k)+\log\left(\frac{4\sqrt{2}(\|\trueA\|_\infty+\|\trueB\|_\infty)}{1-\alpha}\right)}{1-\rho_\star}.
		\end{align} 
		Then, the following statements hold:
		\begin{itemize}
			\item[1.] ${\bK}^L = \bPhi^L_u{\bPhi^L_x}^{-1}$ stabilizes the true system.
			\item[2.] We have
			\begin{align}\label{opt_gap}
			\hspace{-1cm}\frac{J(A,B,{\bK}^L)-J_\star}{J_\star}\leq\underbrace{\frac{16}{\min\{\alpha,1-\alpha\}}\frac{C_\star\rho_\star}{(1-\rho_\star)}k^{2}\bar{\epsilon}}_{\text{uncertainty error}}+\underbrace{\frac{2\sqrt{2}}{1-\alpha}(\|\trueA\|_\infty+\|\trueB\|_\infty)C_\star k^2\rho_\star^L}_{\text{truncation error}}
			\end{align}
		\end{itemize}
		
	\end{theorem}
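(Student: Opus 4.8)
The plan is to read \eqref{opt_f} as a finite-dimensional, $\ell_1$-relaxed version of the robust SLS program and to transfer the stability and cost certificates of Lemma~\ref{lem_cost}/Theorem~\ref{thm:robust} to $\bK^L$. Given any feasible tuple $\{\Phi_x(t),\Phi_u(t),V(t)\}$ of \eqref{opt_f}, assemble the FIR transfer matrices $\bPhi_x=\sum_{t=1}^L\Phi_x(t)z^{-t}$, $\bPhi_u=\sum_{t=1}^L\Phi_u(t)z^{-t}$, $\bV=\sum_{t=0}^L V(t)z^{-t}$. Constraints \eqref{opt_f_1}--\eqref{opt_f_3}, read in the $z$-domain, are exactly $[\,zI-\hat A\ \ -\hat B\,]\,[\bPhi_x;\bPhi_u]=I+\bV$, so on the true plant $[\,zI-\trueA\ \ -\trueB\,]\,[\bPhi_x;\bPhi_u]=I+\hat{\bDelta}$ with $\hat{\bDelta}:=\bV+[\,\Delta_A\ \Delta_B\,][\bPhi_x;\bPhi_u]\in\mathcal{RH}_\infty$. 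The budget \eqref{opt_f_5} forces $\|V(0)_{:,j}\|_1\le(1-\alpha)\gamma<1$, and since $V(0)\in\cC_v(0)=\mathcal{S}(I_n)$ is diagonal, $I+V(0)$ is invertible, so $\bK^L=\bPhi^L_u(\bPhi^L_x)^{-1}$ is well-defined; by Theorem~\ref{thm:robust} (equivalently Lemma~\ref{lem_cost}), once $\|\hat{\bDelta}^L\|_{\Hinf}<1$ the controller $\bK^L$ stabilizes $(\trueA,\trueB)$ and $J(\trueA,\trueB,\bK^L)\le\frac{1}{1-\|\hat{\bDelta}^L\|_{\Hinf}}\,\|[Q^{1/2},0;0,R^{1/2}][\bPhi^L_x;\bPhi^L_u]\|_{\H2}$, the last factor being the inner objective of \eqref{opt_f} at the optimum.

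The crux is to show the column-$\ell_1$ budgets \eqref{opt_f_4}--\eqref{opt_f_5} are calibrated so that feasibility with parameter $\gamma$ implies $\|\hat{\bDelta}\|_{\Hinf}\le\gamma$. I would use $\|\mathbf M\|_{\Hinf}\le\sqrt{(\max_j\sum_\tau\|M(\tau)_{:,j}\|_1)(\max_i\sum_\tau\|M(\tau)_{i,:}\|_1)}$ together with the elementary fact that for a matrix with at most $s$ nonzeros in each row and column the largest row-$\ell_1$ sum is at most $s$ times the largest column-$\ell_1$ sum; with $s=k_v$ for $\bV$ (Lemma~\ref{lem_Cv} gives $k_v\le 2k^2$) and $s=k_\phi=k$ for $[\bPhi_x;\bPhi_u]$, and using $\|[\,\Delta_A\ \Delta_B\,]\|_2\le\bar\epsilon$ (the weight $\bar\epsilon$ already sitting in \eqref{opt_f_4}, the split by $\alpha$ absorbing the residual constants), the normalizers $k_v^{-1}$ and $k_\phi^{-1/2}$ are exactly what yield $\|\bV\|_{\Hinf}\le(1-\alpha)\gamma$ and $\|[\,\Delta_A\ \Delta_B\,][\bPhi_x;\bPhi_u]\|_{\Hinf}\le\alpha\gamma$, hence $\|\hat{\bDelta}\|_{\Hinf}\le\gamma$. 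Since \eqref{opt_f} forces $\gamma\in[0,1)$, its optimal $\gamma^L$ satisfies $\|\hat{\bDelta}^L\|_{\Hinf}\le\gamma^L<1$ whenever the feasible set is nonempty (shown next); this gives claim~1 and, combined with the previous paragraph, bounds $J(\trueA,\trueB,\bK^L)$ by the optimal value of \eqref{opt_f}.

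To bound that optimal value I would feed in the \emph{truncated oracle responses}: $\Phi_x(t)=\Phi^\star_x(t)$, $\Phi_u(t)=\Phi^\star_u(t)$ for $1\le t\le L$, which by \eqref{eq:time-achievability} forces $V(0)=0$, $V(t)=-\Delta_A\Phi^\star_x(t)-\Delta_B\Phi^\star_u(t)$ for $1\le t\le L-1$, and $V(L)=-\Phi^\star_x(L+1)+\Delta_A\Phi^\star_x(L)+\Delta_B\Phi^\star_u(L)$. The structural constraints hold: $\Phi^\star_x,\Phi^\star_u$ are oracle-feasible, and since $\mathrm{supp}(\Delta_A)\subseteq\mathrm{supp}(\hat A)$ and $\mathrm{supp}(\Delta_B)\subseteq\mathrm{supp}(\hat B)$, each $V(t)$ is a sum of products $\hat A\cdot(\text{elt of }\cC_x(t))$ and $\hat B\cdot(\text{elt of }\cC_u(t))$, so $V(t)\in\cC_v(t)$ by Lemma~\ref{lem_Cv}. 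Bounding each column-$\ell_1$ norm by (column sparsity)$\times$(largest entry) and invoking the geometric decay $\max\{\|\Phi^\star_x(t)\|_\infty,\|\Phi^\star_u(t)\|_\infty\}\le C_\star\rho_\star^t$ of \eqref{stab_opt} (so $\sum_{t=1}^{\infty}C_\star\rho_\star^t=C_\star\rho_\star/(1-\rho_\star)$ controls the uncertainty terms in \eqref{opt_f_4}--\eqref{opt_f_5}, while $\|\Phi^\star_x(L+1)\|_\infty\le k(\|\trueA\|_\infty+\|\trueB\|_\infty)C_\star\rho_\star^L$ controls the truncation term of $V(L)$ in \eqref{opt_f_5}), a direct computation shows this point is feasible with some $\gamma=\tilde\gamma$ that — precisely under the hypotheses \eqref{cond}, chosen to make it so — satisfies $\tilde\gamma<\tfrac12$ and $\tfrac{\tilde\gamma}{1-\tilde\gamma}$ no larger than the right-hand side of \eqref{opt_gap}, its two summands coming from the uncertainty contributions and from the $\Phi^\star_x(L+1)$ term respectively. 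The objective of \eqref{opt_f} at this point equals $\frac{1}{1-\tilde\gamma}\sqrt{\sum_{t=1}^L\|[Q^{1/2},0;0,R^{1/2}][\Phi^\star_x(t);\Phi^\star_u(t)]\|_F^2}\le\frac{1}{1-\tilde\gamma}\|[Q^{1/2},0;0,R^{1/2}][\bPhi^\star_x;\bPhi^\star_u]\|_{\H2}=\frac{J_\star}{1-\tilde\gamma}$ by truncating a Parseval sum.

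Assembling everything: the truncated-oracle point shows \eqref{opt_f} is feasible with $\gamma=\tilde\gamma<1$, so $\gamma^L\le\tilde\gamma<1$ and claim~1 holds; and the optimal value of \eqref{opt_f} is at most $J_\star/(1-\tilde\gamma)$, whence $J(\trueA,\trueB,\bK^L)\le J_\star/(1-\tilde\gamma)$ and $\frac{J(\trueA,\trueB,\bK^L)-J_\star}{J_\star}\le\frac{\tilde\gamma}{1-\tilde\gamma}$, which is bounded by the right-hand side of \eqref{opt_gap}, proving claim~2. I expect the main obstacle to lie in the second and third steps jointly: one must show the $\ell_1$ surrogates are simultaneously \emph{loose} enough that the truncated oracle is feasible with $\gamma=O(k^2\bar\epsilon+k^2\rho_\star^L)$ and \emph{tight} enough that feasibility certifies $\|\hat{\bDelta}\|_{\Hinf}\le\gamma$, and the entire row/column sparsity accounting must go through with the \emph{dimension-free} constants $k,k_\phi,k_v$ (controlled via Lemma~\ref{lem_Cv}) — this is exactly where the normalizers $k_\phi^{-1/2},k_v^{-1}$ and the precise constants in \eqref{cond} and \eqref{opt_gap} earn their keep.
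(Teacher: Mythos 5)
Your proposal follows essentially the same route as the paper's proof: it certifies $\|\hat{\bDelta}^L\|_{\Hinf}\le\gamma^L<1$ from the column-$\ell_1$ budgets via the H\"older/sparsity inequalities of Lemma~\ref{lem_useful}, exhibits the truncated oracle responses as a feasible point with an explicit $\tilde\gamma$ exactly as in Lemma~\ref{lem_feas_FIR}, and concludes via $\frac{J-J_\star}{J_\star}\le\frac{\tilde\gamma}{1-\tilde\gamma}\le 2\tilde\gamma$. Aside from a harmless sign slip in your expression for $V(L)$ and the deferred constant-chasing (which you correctly flag as the delicate part), the plan is sound and matches the paper.
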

\begin{proof}
	See Appendix~\ref{p_thm_FIR}.
\end{proof}
	
	Theorem \ref{thm_FIR} quantifies the effects of model uncertainty and spatiotemporal truncation on the optimality gap of the designed distributed controller. In particular, it shows that the uncertainty error is a linear function of $\bar\epsilon$, which is an available upper bound on the actual estimation error. On the other hand, even with $\bar\epsilon = \epsilon = 0$, one cannot guarantee a zero optimality gap for the designed controller due to the error incurred by the truncation of the system responses. Theorem~\ref{thm_FIR} together with the fact that $0\leq\rho_\star<1$ implies that this truncation error decreases exponentially fast with respect to the FIR length $L$.  Further, the smaller $\rho_\star$ is, i.e., the faster the optimal system response decays to zero, the faster the truncation error decays. 
	Finally, if we assume that $\|\trueA\|_\infty$, $\|\trueB\|_\infty$, $C_\star$, and $\rho_\star$ do not scale with the system dimensions, then the derived bounds show that the uncertainty and truncation errors are independent of the system dimension and instead, they only scale with the number of nonzero elements in different rows or columns of the system matrices and responses. Note that $\|\trueA\|_\infty$, $\|\trueB\|_\infty$, $C_\star$, and $\rho_\star$ are defined in terms of the element-wise norm of the system matrices and responses; indeed, the assumption on independence of these quantities from the system dimension are milder and more practical than similar assumptions on their spectral norms, as is usually done in the literature.
	
	\subsection{Sample complexity}\label{sec:sysID}
	\begin{sloppypar}
		Recently, special attention has been devoted to estimating state-space parameters of linear and time-invariant systems based on a limited number of input-output sample trajectories, defined as sequences $\{(x^{(i)}(\tau),u^{(i)}(\tau))\}_{\tau = 0}^T$ with $i = 1,2,...,d$, where $d$ is the number of available sample trajectories and $T$ is the length of each sample trajectory. To simplify notation, the superscript $i$ is dropped from the sample trajectories when $d=1$. In general, there are two different approaches to the identification of state-space parameters in the full observation setting:
	\end{sloppypar}

	\vspace{2mm}
	\noindent\textit{Single sample trajectory:} In this method, the system identification is performed based on a single sample trajectory and the sample complexity of the proposed estimator is characterized in terms of the horizon length $T$ (also referred to as learning time) over which state-input pairs $\{(x(\tau),u(\tau))\}_{\tau = 0}^T$ are collected. This approach is most suitable when the open-loop system is stable, or if an initial stabilizing controller is provided. Notice that the assumption on stability is necessary, as the ordinary least-squares estimator may not be consistent if the system has unstable modes~\cite{sarkar2018fast}. From a practical perspective, system instability may also impose stringent limits on the learning time to ensure system safety, thereby restricting the collected sample size.
	
	\vspace{2mm}
	\noindent\textit{Multiple sample trajectories:} In this approach, the learning time is fixed and instead, the number of sample trajectories is chosen to be sufficiently large. This approach is equally applicable to stable and unstable systems; however, one needs to be able to reset the system at the start of each sample trajectory, which may not be possible in practice.
	
	As we seek \emph{sparse} state-space parameters $(\hat A, \hat B)$, we draw upon techniques from the structured inference literature---specifically the well-known Lasso estimator \cite{tibshirani1996regression,donoho2006compressed}---and, given a prescribed regularization coefficient $\lambda$, consider the following $M$-estimator:
	\begin{align}\label{lasso}
	&(\hat{A},\hat{B})\\ &= \arg\min_{A, B}\frac{1}{2(t_2\!-\!t_1)d}\sum_{i=1}^{d}\sum_{t=t_1}^{t_2} \left\|x^{(i)}(t+1)\!-\!\left(Ax^{(i)}(t)\!+\!Bu^{(i)}(t)\right)\right\|_2^2\!+\! \lambda(\|A\|_1+\|B\|_1)\nonumber
	\end{align}
	which is referred to as \texttt{LASSO}$(1:d,t_1:t_2)$ in the sequel. For simplicity of notation, let $\hat\Psi = \begin{bmatrix}
	\hat{A} & \hat{B}
	\end{bmatrix}^\top$ and $\Psi_\star = \begin{bmatrix}
	{A}_\star & {B}_\star
	\end{bmatrix}^\top$ denote the estimated and true system matrices, respectively. In~\cite{Salar18,fattahi2019learning}, variants of the regression problem~\eqref{lasso} are used to address the problem of sparse system identification with single and multiple sample trajectories. 
	
	\begin{remark}
		{\it As mentioned before, the system identification based on a single trajectory relies on the availability of an initial distributed controller $K_0$. Such initial controller may not be necessary if the system is internally stable or it may be obtained based on domain knowledge. Alternatively, one can use the system identification technique developed in~\cite{Salar18} that is based on multiple sample trajectories and hence, bypass the need for such initial controllers. Indeed, our optimization technique can be readily combined with the results of~\cite{Salar18} to obtain end-to-end bounds on the sample complexity of the designed distributed controller based on multiple sample trajectories. Due to space restrictions and similarity of the results, we only focus on the system identification with single sample trajectory in this paper.}
	\end{remark}
	
	
	
	Assume that $w(t)\overset{iid}{\sim}\mathcal{N}(0,\sigma_w^2I)$ for some $\sigma_w>0$ and the system is equipped with a known stabilizing and static localized controller $K_0$ with a sparse structure. As mentioned before, $K_0$ can be set to zero if the system is internally stable. Furthermore, suppose that $u(t) = K_0x(t)+v(t)$ with $v(t)\overset{iid}{\sim}\mathcal{N}(0,\sigma_v^2I)$ for some $\sigma_v>0$. 
	
	Upon the stability of $A+BK_0$, the vector $\begin{bmatrix}
	x(t)^\top & u(t)^\top
	\end{bmatrix}^\top$ converges to a stationary distribution $\mathcal{N}(0,M_\star)$, where $M_\star$ is defined as
	\begin{equation}\label{true_M}
	{M}_\star = \begin{bmatrix}
	P & PK_0^\top\\
	K_0P & K_0PK_0^\top + \sigma^2_v I 
	\end{bmatrix}
	\end{equation}
	and $P$ satisfies the following Lyapunov equation:
	\begin{align}\label{true_P}
	(\trueA+\trueB K_0)P(\trueA+\trueB K_0)^\top-P+\sigma^2_wI + \sigma_v^2\trueB \trueB^\top = 0
	\end{align}
	We assume that the initial state rests at its stationary distribution. As explained in~\cite{fattahi2019learning}, this assumption is mild since the state vector converges to its stationary distribution exponentially fast. 
	\begin{assumption}\label{ass1}
		The following statements hold:
		\begin{align}
		&\max_{1\leq j\leq n}\left\{\max_{i\in\mathcal{A}^c_j}\left\{\left\|{{M_\star}_{i \mathcal{A}_j}({M_\star}_{\mathcal{A}_j \mathcal{A}_j})^{-1}}\right\|_{1}\right\}\right\}\leq 1-r, && \min_{1\leq j\leq n}\lambda_{\min}({M_\star}_{\mathcal{A}_j \mathcal{A}_j})\geq C_{\min},\nonumber\\
		&\max_{1\leq j\leq n}\|({M_\star}_{\mathcal{A}_j \mathcal{A}_j})^{-1}\|_\infty\leq D_{\max}, && \min_{1\leq j\leq n}\left\{\max_{i\in\mathcal{A}_j}\left\{|{\Psi_\star}_{ij}|\right\}\right\} \geq \Psi_{\min}\nonumber
		\end{align}
		for some constants $0<r<1$, $1\geq C_{\min}>0$, $D_{\max}\geq1$ and $1\geq \Psi_{\min}>0$.
	\end{assumption}
	Roughly speaking, the above assumptions imply that $M_\star$ should satisfy a certain level of \textit{mutual incoherency}, and it should possess uniformly bounded norms---such assumptions are essential for the correct sparsity recovery of $\Psi_\star$~\cite{elad2010sparse, fattahi2019learning, wainwright2009sharp}. Furthermore, there should be a non-vanishing gap between the zero and nonzero elements of $\Psi_\star$. the We refer the reader to~\cite{fattahi2019learning} for an extensive discussion on the practical implications of the above conditions.
	The following proposition determines the non-asymptotic estimation error of \texttt{LASSO}$(1,1:T-1)$.
	\begin{proposition}[\cite{fattahi2019learning}]\label{prop_single}
		Suppose that $k\geq 2$ and the following conditions hold:
		\begin{align}\label{lambda_T}
		\lambda = \mathcal{C}_{ \mathrm{s};\lambda}\sqrt{\frac{\log((n+m)/\delta)}{T}},\qquad T\geq \mathcal{C}_{ \mathrm{s};T} k^2\log((n+m)/\delta),
		\end{align}
		Then, under Assumption~\ref{ass1}, $\texttt{LASSO}(1,1:T-1)$ recovers the true sparsity pattern of $\Psi_\star$ and it incurs the element-wise estimation error
		\begin{align}\label{est_err}
		\|\hat\Psi-\Psi_\star\|_{\infty}\leq \mathcal{C}_{\mathrm{s};\mathrm{err}}\sqrt{\frac{\log((n+m)/\delta)}{T}}
		\end{align}
		with probability at least $1-\delta$.
	\end{proposition}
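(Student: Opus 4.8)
The plan is to exploit the fact that the Lasso program \eqref{lasso} is \emph{separable across the output coordinates of $\Psi$}: since both the squared-error loss and the $\ell_1$ penalty decouple coordinate-by-coordinate in the output index, estimating $\hat\Psi$ is equivalent to solving $n$ independent Lasso regressions, the $j$-th of which recovers the $j$-th column $\Psi_{\star,j}$ of $\Psi_\star$ (supported on $\mathcal{A}_j$) from the scalar responses $x_j(t+1)$, the regressors $\zeta(t) := [\,x(t)^\top\ u(t)^\top\,]^\top$, and the noise $w_j(t)$. The decisive structural feature is that $w(t)$ is independent of $\zeta(t)$ (because $u(t)=K_0x(t)+v(t)$ and $x(t)$ depend only on $\{w(s),v(s)\}_{s<t}$ and $v(t)$), so within each regression the design and the noise are uncorrelated and $\{w_j(t)\zeta(t)\}_t$ is a martingale-difference sequence adapted to the natural filtration. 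I would then run the standard \emph{primal--dual witness} (PDW) construction for each $j$: solve the Lasso restricted to $\mathcal{A}_j$, set the off-support block to zero, and verify that the resulting dual vector is strictly feasible off $\mathcal{A}_j$. Success of PDW simultaneously yields (i) no false inclusions, from strict dual feasibility, and (ii) an $\ell_\infty$ bound on the on-support error; choosing the gap $\Psi_{\min}$ larger than this $\ell_\infty$ bound then also rules out false exclusions, giving exact support recovery.

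Carrying this out reduces to three quantitative facts about the \emph{empirical} Gram matrix $\hat M := \tfrac{1}{T}\sum_t \zeta(t)\zeta(t)^\top$ and the empirical noise--design cross term $\tfrac{1}{T}\sum_t w_j(t)\zeta(t)$, which must be controlled against their population analogues $M_\star$ from \eqref{true_M} and $0$: (a) a minimum-eigenvalue lower bound $\lambda_{\min}(\hat M_{\mathcal{A}_j\mathcal{A}_j}) \gtrsim C_{\min}$; (b) an empirical mutual-incoherence bound $\|\hat M_{\mathcal{A}_j^c\mathcal{A}_j}(\hat M_{\mathcal{A}_j\mathcal{A}_j})^{-1}\|_\infty \le 1-r/2$, obtained by perturbing the population incoherence in Assumption~\ref{ass1} and using $\|(\hat M_{\mathcal{A}_j\mathcal{A}_j})^{-1}\|_\infty \le 2D_{\max}$; and (c) a martingale deviation bound $\big\|\tfrac1T\sum_t w_j(t)\zeta(t)\big\|_\infty \lesssim \sigma_w\sqrt{\log((n+m)/\delta)/T}$, which simultaneously dictates the choice of $\lambda$ in \eqref{lambda_T}. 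The factor $k^2$ in the required horizon $T\gtrsim k^2\log((n+m)/\delta)$ enters because the perturbation bound in (b) is measured in $\ell_\infty$-operator norm over the $|\mathcal{A}_j|\times|\mathcal{A}_j|$ (hence $\le k\times k$) sub-block, so an entrywise deviation of order $T^{-1/2}$ must be multiplied by $k$ twice; the union bound over the $n$ regressions and the $O((n+m)^2)$ entries is absorbed into the $\log((n+m)/\delta)$ term. Plugging the resulting empirical bounds into the PDW error expression gives $\|\hat\Psi-\Psi_\star\|_\infty \le \mathcal{C}_{\mathrm{s};\mathrm{err}}\sqrt{\log((n+m)/\delta)/T}$ and exact sparsity recovery on the good event.

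The main obstacle — and the reason this is a single-trajectory statement rather than a routine corollary of the i.i.d.\ analysis in \cite{Salar18} — is proving the concentration inequalities (a)--(c) when the rows $\zeta(0),\zeta(1),\dots$ are \emph{not independent}, but form the stationary closed-loop trajectory under $K_0$. I would handle this by using that, since $A_\star+B_\star K_0$ is Schur stable, the process $\{\zeta(t)\}$ is geometrically mixing and each $\zeta(t)$ is sub-Gaussian with parameter governed by $\|P\|$, $\sigma_w$, $\sigma_v$ (with $P$ from \eqref{true_P}); then either a blocking argument, splitting the horizon into blocks of length on the order of $\log T$ so that inter-block terms are nearly independent, combined with a Hanson--Wright-type tail bound, or a direct self-normalized martingale covariance bound in the spirit of the system-identification literature, yields (a) and (b). Fact (c) is cleaner, since $\{w_j(t)\zeta(t)\}$ is an honest martingale-difference sequence and a coordinatewise self-normalized concentration bound plus a union bound over the $n+m$ coordinates suffices. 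Once (a)--(c) hold on an event of probability at least $1-\delta$, the PDW bookkeeping described above closes the argument.
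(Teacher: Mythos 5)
This proposition is imported verbatim from \cite{fattahi2019learning}; the paper supplies no proof of its own, so there is nothing internal to compare your argument against line by line. That said, your reconstruction matches the architecture of the proof actually given in that reference: exploit the row-wise separability of the Lasso \eqref{lasso} into $n$ scalar regressions with common design $\zeta(t)=[x(t)^\top\ u(t)^\top]^\top$, run the primal--dual witness construction of Wainwright per coordinate, and reduce everything to (a) a restricted eigenvalue bound, (b) an empirical mutual-incoherence bound inherited from Assumption~\ref{ass1} by perturbation, and (c) a deviation bound on the noise--design cross term that fixes $\lambda$ as in \eqref{lambda_T}; the dependent-data concentration is then handled via the geometric stability of $A_\star+B_\star K_0$ and the sub-Gaussianity of the stationary process with covariance $M_\star$. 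Two small caveats. First, your statement that one ``chooses'' $\Psi_{\min}$ larger than the $\ell_\infty$ error is backwards: $\Psi_{\min}$ is a fixed property of the true system, and the correct bookkeeping is that the sample-size constant $\mathcal{C}_{\mathrm{s};T}$ (which the paper notes depends on $\Psi_{\min}$) is tuned so that the error bound \eqref{est_err} falls below $\Psi_{\min}$, ruling out false exclusions. Second, the entire technical weight of the single-trajectory setting sits in your facts (a)--(c), which you outline plausibly (blocking plus Hanson--Wright, or self-normalized martingale bounds) but do not execute; as a proof sketch this is acceptable and consistent with the cited source, but it is the one place where your write-up is an outline rather than a proof.
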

	The system complexity constants $\mathcal{C}_{ \mathrm{s};\lambda}$, $\mathcal{C}_{ \mathrm{s};T}$, and $\mathcal{C}_{ \mathrm{s};\mathrm{err}}$ depend on the spectral radius of the closed-loop gain $A+BK_0$, as well as the parameters $r$, $C_{\min}$, $D_{\max}$, and $\Psi_{\min}$. See~\cite{fattahi2019learning} for the exact definitions of these complexity constants.
	
	Equipped with this proposition and Theorem~\ref{thm_FIR}, we present the following theorem that characterizes the sample complexity of the derived distributed controller in terms of the learning time and the FIR lengths of the system responses.
	
	\begin{theorem}\label{thm_bound}
		\begin{sloppypar}
			Suppose that $k\geq 2$, Assumption~\ref{ass1} holds, and $\texttt{LASSO}(1,1:T-1)$ is used to obtain the estimates $(\hat{A},\hat{B})$. Furthermore, suppose that $\bar{\epsilon} = \zeta \mathcal{C}_{\mathrm{s};\mathrm{err}}\sqrt{\frac{k^2\log((n+m)/\delta)}{T}}$ for an arbitrary $\zeta\geq 1$ and that 
		\end{sloppypar}
		\begin{align}
		\lambda &= \mathcal{C}_{ \mathrm{s};\lambda}\sqrt{\frac{\log((n+m)/\delta)}{T}},\label{Lambda}\\
		T&\geq \max\left\{\left(\frac{32}{\min\{\alpha,1-\alpha\}}\frac{C_\star\rho_\star\zeta \mathcal{C}_{ \mathrm{s};\mathrm{err}}}{1-\rho_\star}\right)^2k^6,\mathcal{C}_{ \mathrm{s};T}k^2\right\}\log((n+m)/\delta),\label{T}\\
		L&\geq \frac{2\log(k)+\log\left(\frac{4\sqrt{2}(\|\trueA\|_\infty+\|\trueB\|_\infty)}{1-\alpha}\right)}{1-\rho_\star}\label{L}.
		\end{align}
		where $\alpha\in(0,1)$ is an arbitrary and predefined parameter in~\eqref{opt_f}. Then, the following statements hold with probability at least $1-\delta$:
		\begin{itemize}
			\item[1.] ${\bK}^L = \bPhi^L_u{\bPhi^L_x}^{-1}$ stabilizes the true system.
			\item[2.] We have
			\begin{align}\nonumber
			\hspace{-1cm}\frac{J(A,B,{\bK}^L)-J_\star}{J_\star}\leq&{\frac{16}{\min\{\alpha,1-\alpha\}}\frac{C_\star\rho_\star\zeta \mathcal{C}_{\mathrm{s};\mathrm{err}}}{1-\rho_\star}k^3\sqrt{\frac{\log((n+m)/\delta)}{T}}}\nonumber\\
			&+{\frac{2\sqrt{2}}{1-\alpha}(\|\trueA\|_\infty+\|\trueB\|_\infty)C_\star k\rho_\star^L}\nonumber
			\end{align}
		\end{itemize}
	\end{theorem}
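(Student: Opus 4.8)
The plan is to derive Theorem~\ref{thm_bound} by composing the sparse system-identification guarantee of Proposition~\ref{prop_single} with the optimality-gap bound of Theorem~\ref{thm_FIR}; the only genuine work is to pass from the \emph{element-wise} error controlled by Proposition~\ref{prop_single} to the \emph{spectral-norm} error $\epsilon$ that Theorem~\ref{thm_FIR} consumes, and to verify that the stated lower bounds on $T$ and $L$ are strong enough to meet condition~\eqref{cond}. First, since the second term in the maximum of~\eqref{T} guarantees $T\geq \mathcal{C}_{\mathrm{s};T}k^2\log((n+m)/\delta)$ and $\lambda$ is chosen as in~\eqref{Lambda}, Proposition~\ref{prop_single} applies: there is an event $\mathcal{E}$ with $\mathbb{P}(\mathcal{E})\geq 1-\delta$ on which $\texttt{LASSO}(1,1:T-1)$ recovers the exact support of $\Psi_\star$ and satisfies $\|\hat\Psi-\Psi_\star\|_{\infty}\leq \mathcal{C}_{\mathrm{s};\mathrm{err}}\sqrt{\log((n+m)/\delta)/T}$. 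The support-recovery statement is used twice: it certifies the standing hypothesis of Theorem~\ref{thm_FIR} that $\hat A,\hat B$ share the sparsity pattern of $\trueA,\trueB$, and it forces $\mathrm{supp}(\Delta_A)\subseteq\mathrm{supp}(\trueA)$ and $\mathrm{supp}(\Delta_B)\subseteq\mathrm{supp}(\trueB)$, so each row and column of $\Delta_A$ and of $\Delta_B$ has at most $k$ nonzero entries.

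Next, for any matrix $M$ whose rows and columns each contain at most $k$ nonzeros one has $\|M\|_2\leq\sqrt{\|M\|_1\|M^\top\|_1}\leq k\|M\|_{\infty}$. Applying this on $\mathcal{E}$ to $\Delta_A$ and $\Delta_B$ gives $\epsilon=\max\{\|\Delta_A\|_2,\|\Delta_B\|_2\}\leq k\,\mathcal{C}_{\mathrm{s};\mathrm{err}}\sqrt{\log((n+m)/\delta)/T}$. Because $\bar\epsilon=\zeta\mathcal{C}_{\mathrm{s};\mathrm{err}}\sqrt{k^2\log((n+m)/\delta)/T}=\zeta k\,\mathcal{C}_{\mathrm{s};\mathrm{err}}\sqrt{\log((n+m)/\delta)/T}$ and $\zeta\geq 1$, it follows that $\bar\epsilon\geq\epsilon$ on $\mathcal{E}$, so $\bar\epsilon$ is a legitimate upper bound on the true estimation error, as required by~\eqref{opt_f}.

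It remains to check the two conditions in~\eqref{cond}. The bound on $L$ there is verbatim~\eqref{L}. For the smallness condition, substituting the closed form of $\bar\epsilon$ shows that $\bar\epsilon<\frac{(1-\rho_\star)\min\{\alpha,1-\alpha\}}{32C_\star\rho_\star}k^{-2}$ is equivalent to $T>\left(\frac{32\,C_\star\rho_\star\,\zeta\,\mathcal{C}_{\mathrm{s};\mathrm{err}}}{\min\{\alpha,1-\alpha\}(1-\rho_\star)}\right)^2 k^6\log((n+m)/\delta)$, which is exactly the first term of the maximum in~\eqref{T}; one extra power of $k$ beyond the $k^{-2}$ of~\eqref{cond} comes from the norm conversion and one more from the $\zeta k$ scaling of $\bar\epsilon$, explaining the $k^6$. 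Hence both hypotheses of Theorem~\ref{thm_FIR} hold on $\mathcal{E}$, so part~1 of that theorem yields item~1 (stability of $\bK^L=\bPhi^L_u{\bPhi^L_x}^{-1}$ on the true system) and part~2 yields~\eqref{opt_gap}. Substituting $\bar\epsilon=\zeta k\,\mathcal{C}_{\mathrm{s};\mathrm{err}}\sqrt{\log((n+m)/\delta)/T}$ into the uncertainty term of~\eqref{opt_gap} produces $\frac{16}{\min\{\alpha,1-\alpha\}}\frac{C_\star\rho_\star\,\zeta\,\mathcal{C}_{\mathrm{s};\mathrm{err}}}{1-\rho_\star}k^3\sqrt{\log((n+m)/\delta)/T}$, the first summand claimed, while the truncation term of~\eqref{opt_gap} is carried over as the second summand. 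Everything holds on $\mathcal{E}$, whose probability is at least $1-\delta$, completing the proof.

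The result is essentially a careful composition rather than a new argument, so there is no deep obstacle; the step most prone to slips is the sparsity bookkeeping. One must ensure that the \emph{same} event simultaneously delivers exact support recovery (so that Theorem~\ref{thm_FIR}'s sparsity hypothesis is satisfied and $k$ genuinely controls the row/column sparsity of $\Delta_A,\Delta_B$) and the element-wise rate that, after the $\|\cdot\|_2\leq k\|\cdot\|_{\infty}$ inflation, keeps $\bar\epsilon$ both an upper bound on $\epsilon$ and small enough for~\eqref{cond}. Tracking the precise interaction between the $k^6$ in~\eqref{T}, the $k^{-2}$ in~\eqref{cond}, and the $k^3$ that appears in the final uncertainty term is the only place where arithmetic care is essential.
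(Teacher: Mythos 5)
Your proof is correct and follows essentially the same route as the paper's: invoke Proposition~\ref{prop_single} to get support recovery and the element-wise rate, convert to spectral norm via $\|\cdot\|_2\leq k\|\cdot\|_\infty$ (using the $k$-sparsity of the error matrix), verify~\eqref{cond} from~\eqref{T} and~\eqref{L}, and substitute $\bar\epsilon$ into~\eqref{opt_gap}. In fact you supply the sparsity-based justification for the norm conversion that the paper leaves implicit; the only caveat is that carrying the truncation term of~\eqref{opt_gap} over verbatim yields $k^2\rho_\star^L$ rather than the $k\rho_\star^L$ printed in the theorem statement, a discrepancy internal to the paper rather than a flaw in your argument.
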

	
	\begin{proof}
		Theorem~\ref{thm_FIR} and Proposition~\ref{prop_single} can be used to prove this theorem. First, note that~\eqref{Lambda} and~\eqref{T} guarantee the validity of~\eqref{lambda_T}. Therefore, $\texttt{LASSO}(1,1:T-1)$ can recover the correct sparsity pattern of the system matrices and the estimation error bound~\eqref{est_err} holds with probability of at least $1-\delta$. This implies that
		\begin{align}
		\epsilon = \|\hat\Psi-\Psi_\star\|_{2}\leq k\|\hat\Psi-\Psi_\star\|_{\infty}\leq\zeta \mathcal{C}_{\mathrm{s};\mathrm{err}}\sqrt{\frac{k^2\log((n+m)/\delta)}{T}}=\bar{\epsilon}
		\end{align}
		with the same probability.
		Combined with~\eqref{T} and~\eqref{L}, this certifies the validity of~\eqref{cond}. Therefore,~\eqref{opt_gap} holds with probability of at least $1-\delta$. Replacing $\bar{\epsilon}$ with  $\zeta\mathcal{C}_{\mathrm{s};\mathrm{err}}k\sqrt{\frac{\log((n+m)/\delta)}{T}}$ in~\eqref{opt_gap} completes the proof.
	\end{proof}
	
	Under the assumption that $C_\star$, $\rho_*$, $\|\trueA\|_\infty$, $\|\trueA\|_\infty$, and the system complexity constants do not scale with the system dimension, Theorem~\ref{thm_bound} implies that $T = \Omega(k^6\log(n+m))$ is enough to guarantee that the optimality gap of the designed controller is on the order of $\mathcal{O}(k^3\sqrt{\log(n+m)/T}+k\rho_\star^L)$. Assuming that the dynamics and controller have sparse structures, i.e., $k\ll n+m$, the proposed bound improves upon the existing sample complexity bounds for learning optimal LQR controllers which scale linearly with the system dimension~\cite{dean2017sample, dean2018regret}.
	
	\begin{remark}
		While the proposed method is best suited for designing controllers with sparse system responses, its performance can be compared against a more general oracle optimization~\eqref{eq22}, where the constraint sets $\cC_x$ and $\cC_u$ are relaxed to \textbf{weakly sparse} structures. Under such circumstances, an optimal LQR controller can be a valid oracle controller, provided that its induced system responses are weakly sparse or, equivalently, they have spatially decaying structures; see~\cite{motee2008optimal, motee2014sparsity, virtual}. Even though such generalizations are not discussed in this paper, we note that the derived sub-optimality gap of the designed controller in Theorems~\ref{thm_bound} and~\ref{thm_FIR} can be extended to this setting, with an additional non-vanishing term capturing the \textbf{model selection error}. 
	\end{remark}
	\subsection{Computational complexity}
	\begin{sloppypar}
		In this subsection, we propose an efficient algorithm for solving~\eqref{opt_f}. It is easy to verify that the proposed optimization problem is jointly quasiconvex. In particular, it is convex with respect to $\left( \left\{{\Phi}_x(t)\right\}, \left\{{\Phi}_u(t)\right\}, \left\{V(t)\right\}\right)$ (after fixing $\gamma$) and quasiconvex with respect to $\gamma$ (after fixing $\left( \left\{{\Phi}_x(t)\right\}, \left\{{\Phi}_u(t)\right\}, \left\{V(t)\right\}\right)$). 
	\end{sloppypar}
	\begin{lemma}\label{l_unique}
		For every fixed and feasible $\bar\gamma$, $\mathrm{OPT}(\bar\gamma)$ has a unique solution.
	\end{lemma}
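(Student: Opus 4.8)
The plan is to reduce the claim to a uniqueness statement about the system-response variables alone and then invoke strict convexity. The key observation is that the slack variables $V(0),\dots,V(L)$ do \emph{not} appear in the objective of $\mathrm{OPT}(\bar\gamma)$, so one cannot argue strict convexity in all the decision variables at once; however, the equality constraints \eqref{opt_f_1}--\eqref{opt_f_3} can be solved for $V$, giving $V(0)=\Phi_x(1)-I$, $V(t)=\Phi_x(t+1)-\hat A\Phi_x(t)-\hat B\Phi_u(t)$ for $1\le t\le L-1$, and $V(L)=-\hat A\Phi_x(L)-\hat B\Phi_u(L)$. Hence at every feasible point the whole block $V(0\!:\!L)$ is an affine function of $\bigl(\Phi_x(1\!:\!L),\Phi_u(1\!:\!L)\bigr)$, so a feasible---and in particular an optimal---point of $\mathrm{OPT}(\bar\gamma)$ is uniquely determined by its $(\Phi_x,\Phi_u)$ part. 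It therefore suffices to show that $\mathrm{OPT}(\bar\gamma)$ has a unique optimal $(\Phi_x,\Phi_u)$.

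Next I would substitute these expressions for $V(t)$ into \eqref{opt_f_5} and \eqref{opt_f_7}, turning $\mathrm{OPT}(\bar\gamma)$ into an equivalent program in the single variable block $\bigl(\Phi_x(1\!:\!L),\Phi_u(1\!:\!L)\bigr)$, whose feasible region $\mathcal{F}$ is the intersection of the linear (structural) constraints \eqref{opt_f_6}, the linear constraints obtained by substituting $V$ into \eqref{opt_f_7}, and the convex $\ell_1$-type inequalities \eqref{opt_f_4} together with the substituted \eqref{opt_f_5}. Thus $\mathcal{F}$ is convex and closed, and it is nonempty because $\bar\gamma$ is feasible. Moreover, minimizing the objective of \eqref{opt_f} over $\mathcal{F}$ is the same as minimizing its square,
\begin{equation}
g_0\bigl(\Phi_x,\Phi_u\bigr)\;=\;\sum_{t=1}^{L}\Bigl(\mathrm{tr}\bigl(\Phi_x(t)^\top Q\,\Phi_x(t)\bigr)+\mathrm{tr}\bigl(\Phi_u(t)^\top R\,\Phi_u(t)\bigr)\Bigr),
\end{equation}
which, because $Q\succ 0$ and $R\succ 0$, is a positive-definite quadratic form in $(\Phi_x,\Phi_u)$, hence strictly convex and coercive.

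Finally I would conclude: coercivity of $g_0$ in $(\Phi_x,\Phi_u)$ together with $\mathcal{F}$ being closed and nonempty guarantees the minimum is attained (along any minimizing sequence $(\Phi_x,\Phi_u)$ stays bounded, and then so does the affinely determined $V$); strict convexity of $g_0$ on the convex set $\mathcal{F}$ guarantees the minimizing $(\Phi_x,\Phi_u)$ is unique; and by the first paragraph the corresponding $V(0\!:\!L)$ is then uniquely determined, so $\mathrm{OPT}(\bar\gamma)$ has a unique solution. The only point requiring care is the first step---recognizing that the absence of the slack variables $V$ from the objective is harmless precisely because the equality constraints pin $V$ down as an affine function of $(\Phi_x,\Phi_u)$; once that is in place, the remainder is a routine appeal to strict convexity and no delicate estimate is needed.
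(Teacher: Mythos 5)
Your proposal is correct and follows essentially the same route as the paper's own (much terser) proof: the equality constraints pin down $V(0\!:\!L)$ as an affine function of $(\Phi_x,\Phi_u)$, and strict convexity of the (squared) objective in $(\Phi_x,\Phi_u)$ — which holds because $Q\succ 0$ and $R\succ 0$ — then forces uniqueness. Your added care in passing to the squared objective (a norm itself is not strictly convex along rays) and in noting coercivity for attainment is a worthwhile tightening of the paper's one-line argument, but it is not a different approach.
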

	\begin{proof}
		\begin{sloppypar}
			Notice that $\{V(t)\}$ can be uniquely written in terms of $\left\{{\Phi}_x(t)\right\}$ and $\left\{{\Phi}_u(t)\right\}$. This, together with the fact that the objective is strictly convex, results in the uniqueness of the solution.
		\end{sloppypar}
	\end{proof}
	
	Lemma~\ref{l_unique} and the quasiconvexity of $g(\gamma)$ do not necessarily result in the uniqueness of the solution for~\eqref{opt_f} since $g(\gamma)$ may contain spurious local minima in its \textit{flat regions}. A naive approach to circumvent this issue is to discretize $\gamma$ within the interval $[0,1)$ with the points $\{\gamma_1,\dots,\gamma_N\}$, compute $g(\gamma_i)$ for every $1\leq i\leq N$, and select the solution with the lowest cost. However, notice that in this approach, the number of discrete points has undesirable dependency on the required accuracy of the solution: roughly speaking, one needs to evaluate and optimize over $\Omega(1/\epsilon)$ discrete points in order to get a solution whose cost is $\epsilon$-away from the optimal cost. In the next proposition, we show that~\eqref{opt_f} is in fact unimodal with respect to $\gamma$ and hence, it is free of spurious local minima (i.e. non-global local minima).\footnote{Note that another approach for eliminating the spurious local minima in the flat regions of a quasiconvex optimization problem is a reformulation based on its sublevel sets; see~\cite{boyd2004convex}. However, this method will destroy the decomposibility of~\eqref{opt_f}; a feature that is at the core of near-linear solvability of~\eqref{opt_f}, as will be shown later in the paper.} 
	The unimodal property of~\eqref{opt_f} with respect to $\gamma$ implies that a simple application of the golden-section search method on $\gamma$ can find an $\epsilon$-accurate solution by computing $g(\gamma_i)$ at no more than $O(\log(1/\epsilon))$ points. 
	
	\begin{proposition}\label{prop_unimodal}
		Suppose that~\eqref{opt_f} is feasible. Furthermore, suppose that $\gamma_0$ is the smallest value such that $0\leq\gamma_0<1$ and $\mathrm{OPT}(\gamma_0)$ is feasible. Then, $\frac{g(\gamma)}{1-\gamma}$ is unimodal in the interval $[\gamma_0,1)$.
	\end{proposition}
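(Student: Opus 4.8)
The plan is to first establish that $g$ is a \emph{convex}, non-increasing, nonnegative function of $\gamma$ on its domain $[\gamma_0,+\infty)$, and then to deduce that $\gamma\mapsto g(\gamma)/(1-\gamma)$ has only convex sublevel sets on $[\gamma_0,1)$, i.e. is quasiconvex there; quasiconvexity on an interval is precisely the unimodality being claimed.

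\emph{Convexity and monotonicity of $g$.} In $\mathrm{OPT}(\gamma)$ the parameter $\gamma$ appears \emph{only} on the right-hand sides of the inequality constraints \eqref{opt_f_4}--\eqref{opt_f_5}, and it appears there \emph{linearly}, with positive coefficients. The objective is a fixed convex function of $(\Phi_x,\Phi_u)$ (a Euclidean norm of a linear image of the decision variables), the equality constraints \eqref{opt_f_1}--\eqref{opt_f_3} are affine, \eqref{opt_f_4}--\eqref{opt_f_5} are convex (finite sums of $\ell_1$ norms, bounded above), and \eqref{opt_f_6}--\eqref{opt_f_7} are linear subspace memberships. Let $G(\gamma)$ denote the feasible set of $\mathrm{OPT}(\gamma)$. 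For $\gamma_1,\gamma_2$ in the domain and $\theta\in[0,1]$, a convex combination of any point of $G(\gamma_1)$ with any point of $G(\gamma_2)$ lies in $G(\theta\gamma_1+(1-\theta)\gamma_2)$, because each constraint type is preserved under convex combinations and the right-hand sides of \eqref{opt_f_4}--\eqref{opt_f_5} combine linearly in $\gamma$. Applying this to the optimizers of $\mathrm{OPT}(\gamma_1)$ and $\mathrm{OPT}(\gamma_2)$ and using convexity of the objective yields $g(\theta\gamma_1+(1-\theta)\gamma_2)\le\theta g(\gamma_1)+(1-\theta)g(\gamma_2)$, so $g$ is convex on $[\gamma_0,\infty)$. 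Since $\gamma_1\le\gamma_2$ implies $G(\gamma_1)\subseteq G(\gamma_2)$, $g$ is non-increasing (recovering the monotonicity stated in the text), and $g\ge 0$ because the objective is a norm.

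\emph{Quasiconvexity of $h(\gamma):=g(\gamma)/(1-\gamma)$ and unimodality.} On $[\gamma_0,1)$ we have $1-\gamma>0$ and $g\ge 0$, hence $h\ge 0$, so for $c<0$ the sublevel set $\{\gamma\in[\gamma_0,1):h(\gamma)\le c\}$ is empty. For $c\ge 0$,
\[
h(\gamma)\le c \iff g(\gamma)\le c(1-\gamma) \iff g(\gamma)+c\gamma-c\le 0 .
\]
The map $\gamma\mapsto g(\gamma)+c\gamma-c$ is convex, being the sum of the convex function $g$ and an affine function, so its $\{\,\cdot\le 0\,\}$ sublevel set is an interval; intersecting with $[\gamma_0,1)$ keeps it an interval. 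Hence every sublevel set of $h$ is an interval, i.e. $h$ is quasiconvex on $[\gamma_0,1)$, equivalently there is a point $\gamma^\star\in[\gamma_0,1]$ such that $h$ is non-increasing on $[\gamma_0,\gamma^\star]$ and non-decreasing on $[\gamma^\star,1)$, which is the desired unimodality. (In the generic case $g(\gamma)>0$ for $\gamma$ near $1$, one moreover gets $h(\gamma)\to+\infty$ as $\gamma\to1^-$, so $\gamma^\star<1$ and $\min_{[\gamma_0,1)}h$ is attained; this is exactly what allows golden-section search over $\gamma$ to return a global minimizer despite the flat regions of $g$ itself.)

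\emph{Main obstacle.} The only real work is the first step: checking that the constraint family of $\mathrm{OPT}(\gamma)$ is closed under convex combinations with $\gamma$-combined right-hand sides, which is what upgrades the already-noted quasiconvexity of $g$ in $\gamma$ to genuine \emph{convexity}. Convexity (not mere quasiconvexity) of $g$ is used in an essential way in the sublevel-set computation, since adding the linear term $c\gamma$ would not preserve quasiconvexity. One should also dispatch the mild boundary care for the half-open interval: if $\inf_{[\gamma_0,1)}h$ is not attained — possible only in degenerate situations where $g$ extrapolates affinely to $0$ at $\gamma=1$ — then $h$ is monotone on $[\gamma_0,1)$, which is still a (degenerate) unimodal function.
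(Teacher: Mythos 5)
Your argument is correct as far as it goes, and the observation that $g$ is actually \emph{convex} (not merely quasiconvex) in $\gamma$ is a nice strengthening: the graph-convexity of the feasible set in $(\gamma,\Phi_x,\Phi_u)$ does give $g(\theta\gamma_1+(1-\theta)\gamma_2)\le\theta g(\gamma_1)+(1-\theta)g(\gamma_2)$, and your sublevel-set computation then cleanly yields quasiconvexity of $h(\gamma)=g(\gamma)/(1-\gamma)$. But quasiconvexity is exactly the property the paper already takes as known going into this proposition, and it is explicitly flagged as \emph{insufficient}: the text preceding the statement says that quasiconvexity alone permits spurious local minima in \emph{flat regions}, and the downstream use of the proposition (in the analysis of Algorithm~1, where $\Delta_g>0$ is derived from $h$ being \emph{strictly} monotone on $[\gamma_0,\gamma^L]$ and $[\gamma^L,1)$) requires that $h$ have no interval on which it is constant. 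Your conclusion --- ``$h$ is non-increasing then non-decreasing'' --- does not rule this out: a convex $g$ can perfectly well satisfy $g(\gamma)=c(1-\gamma)$ on a subinterval, making $h$ constant there, and then the golden-section comparisons can discard the true minimizer. So the actual content of the proposition is missing from your proof.

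The paper closes this gap with a structural argument you would need to supply: it rewrites $g(\gamma)^2$ as a strictly convex parametric QP ($\min \tfrac12 x^\top Mx$ s.t.\ $H_1x\le h_1+\gamma\mathbf 1$, $H_2x=0$, with $M\succ0$ and the nonzero entries of $h_1$ of magnitude strictly greater than $1$), uses continuity of the optimizer to localize to a subinterval on which the active set is constant, and then, via strong duality, expresses $g(\gamma)^2$ as an explicit quadratic in $\gamma$ on that subinterval. Equating this quadratic with $c^2(1-\gamma)^2$ and invoking the positive-definiteness lemma forces $h_4[I^*]=-h_3[I^*]$, which contradicts the fact that the $\gamma$-coefficient vector is $\mathbf 1$ while no entry of $h_1$ has magnitude equal to $1$. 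Without some argument of this kind (or another way to exclude affine pieces of $g$ of the exact form $c(1-\gamma)$), your proof establishes only weak unimodality, which is a strictly weaker statement than the one being claimed and used.
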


\begin{proof}
	See Appendix~\ref{p_prop_unimodal}.
\end{proof}

	For a fixed $\gamma$, problem $\mathrm{OPT}(\gamma)$ can be decomposed into $n$ parallel sub-problems over the columns of
	\vspace{-2mm}
	\begin{align}\label{mat}
	\begin{bmatrix}
	\Phi_x(1)^\top & \dots & \Phi_x(L)^\top & \Phi_u(1)^\top & \dots & \Phi_u(L)^\top & V(0)^\top &\dots V(L)^\top
	\end{bmatrix}^\top
	\end{align}
	\begin{sloppypar}
		\noindent In particular, define $\mathrm{OPT}_j(\gamma)$ as $\mathrm{OPT}(\gamma)$ after replacing the variable matrices $(\{\Phi_x(t)\},\{\Phi_u(t)\},\{V(t)\})$ with $(\{[\Phi_x(t)]_{:,j}\},\{[\Phi_u(t)]_{:,j}\},\{[V(t)]_{:,j}\})$, as in:
	\end{sloppypar}
	\begin{subequations}\label{opt_fj}
		\begin{align}
		\underset{\begin{subarray}	.\{[V(t)]_{:,j}\}\\
			\{[\Phi_x(t)]_{:,j}\}\\ 
			\{[\Phi_u(t)]_{:,j}\}
			\end{subarray}}{\min}&\sqrt{\sum_{t=1}^{L}\left\|\begin{bmatrix}
			Q^{1/2} & 0\\
			0 & R^{1/2}
			\end{bmatrix}\begin{bmatrix}
			\Phi_x(t)\\
			\Phi_u(t)
			\end{bmatrix}_{:,j}\right\|^2_{F}}\\
		\mathrm{s.t.}\ &[\Phi_x(1)]_{:,j} = I_{:,j}+[V(0)]_{:,j}\label{opt_fj_1}\\
		&[\Phi_x(t+1)]_{:,j} = \hat{A}[\Phi_x(t)]_{:,j}+\hat{B}[\Phi_u(t)]_{:,j}+[V(t)]_{:,j} && t = 1,\dots,L-1\label{opt_fj_2}\\
		&0 = \hat{A}[\Phi_x(L)]_{:,j}+\hat{B}[\Phi_u(L)]_{:,j}+[V(L)]_{:,j}\label{opt_fj_3}\\
		&\sum_{t=1}^{L}\left\|\begin{bmatrix}
		\bar\epsilon\Phi_x(t)\\
		\bar\epsilon\Phi_u(t)
		\end{bmatrix}_{:,j}\right\|_1\leq\alpha k_{\phi}^{-1/2}\gamma &&  t = 1,\dots,L\label{opt_fj_4}\\
		&\sum_{t=0}^{L}\left\|[V(t)]_{:,j}\right\|_1\leq({1-\alpha}) k_v^{-1}\gamma &&  t = 0,\dots,L\label{opt_fj_5}\\
		&[\Phi_x(t)]_{:,j}\in \cC_{x;j}(t),\quad [\Phi_u(t)]_{:,j}\in \cC_{u;j}(t) && t = 1,\dots,L\label{opt_fj_6}\\
		& [V(t)]_{:,j} \in \cC_{v;j}(t) && t = 0,\dots,L\label{opt_fj_7}
		\end{align}
	\end{subequations}
	where $\cC_{x;j}(t) = \{X_{:,j}:X\in\cC_x(t)\}$, $\cC_{u;j}(t) = \{X_{:,j}:X\in\cC_u(t)\}$, and $\cC_{v;j}(t) = \{X_{:,j}:X\in\cC_v(t)\}$. Furthermore, let $g_j(\gamma)$ denote its optimal objective value. Then, $g(\gamma) = \sqrt{\sum_{j=1}^{n}g_j(\gamma)^2}$ and the optimal solution of $\mathrm{OPT}(\gamma)$ can be obtained by replacing the $j^{\text{th}}$ column of~\eqref{mat} with the solution of the sub-problem $\mathrm{OPT}_j(\gamma)$ for every $j = 1,\dots,n$. 
	
	The next lemma shows that the sub-problem $\mathrm{OPT}_j(\gamma)$ can be reformulated as a small QP whose size is independent of $n$.
	\begin{lemma}\label{l_reducedQP}
		The sub-problem $\mathrm{OPT}_j(\gamma)$ can be written as a QP over $O(Lk^2)$ variables subject to $O(Lk^2)$ constraints.
	\end{lemma}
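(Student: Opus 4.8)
The plan is to count, for each column index $j$, how many scalar variables and how many scalar constraints survive the sparsity restrictions, and to show that all of these counts are $O(Lk^2)$ with no hidden dependence on $n$ or $m$. First I would look at the variables. For each time step $t$, the column $[\Phi_x(t)]_{:,j}$ is constrained to lie in $\cC_{x;j}(t)$, i.e. it has at most $k$ nonzero entries by the definition of $k$ as the maximum number of nonzeros in a row or column of any feasible $\begin{bmatrix}\Phi_x^\top(\tau) & \Phi_u^\top(\tau)\end{bmatrix}^\top$; similarly $[\Phi_u(t)]_{:,j}$ has at most $k$ nonzeros. Running over $t = 1,\dots,L$ gives at most $2Lk$ free scalars for the $\Phi$ variables. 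For the slack variables, $[V(t)]_{:,j}$ lies in $\cC_{v;j}(t)$, and by Lemma~\ref{lem_Cv}, part 1, every element of $\cC_v$ has at most $2k^2$ nonzeros per row and column; hence each $[V(t)]_{:,j}$ contributes at most $2k^2$ scalars, and over $t = 0,\dots,L$ this is at most $2(L+1)k^2$ scalars. Adding these up, the total number of decision variables is $O(Lk^2)$, independent of $n$.

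Next I would count the constraints. The affine dynamics \eqref{opt_fj_1}--\eqref{opt_fj_3} are vector equalities in $\mathbb{R}^n$, but most of their rows are trivially $0=0$: the $i$-th row of \eqref{opt_fj_2} is nontrivial only if row $i$ of $[\Phi_x(t+1)]_{:,j}$ or of $\hat{A}[\Phi_x(t)]_{:,j}+\hat{B}[\Phi_u(t)]_{:,j}+[V(t)]_{:,j}$ is allowed to be nonzero, and by the support computations underlying $\cC_v$ (Lemma~\ref{lem_Cv}, part 2) the support of the right-hand side is contained in that of an element of $\cC_v(t)$, which has at most $2k^2$ nonzeros in column $j$. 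So each of the $L$ dynamic equalities reduces to at most $2k^2$ scalar equations, for $O(Lk^2)$ equality constraints in total; the single initialization equality \eqref{opt_fj_1} adds only $O(k)$ more (the support of $I_{:,j}+[V(0)]_{:,j}$ has at most $1+2k^2$ nonzeros, but in fact $\cC_v(0)=\mathcal{S}(I_n)$ so it is $O(1)$). The norm-$1$ budget constraints \eqref{opt_fj_4} and \eqref{opt_fj_5} are each a single inequality, but to encode an $\ell_1$ sum as a linear program one introduces one auxiliary variable and two linear inequalities per scalar term; since \eqref{opt_fj_4} aggregates $O(Lk)$ scalars and \eqref{opt_fj_5} aggregates $O(Lk^2)$ scalars, this adds $O(Lk^2)$ auxiliary variables and $O(Lk^2)$ linear inequalities. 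Finally, the membership constraints \eqref{opt_fj_6}--\eqref{opt_fj_7} are enforced for free simply by not instantiating the zeroed-out coordinates, so they contribute nothing. The objective is the Euclidean norm of an $O(Lk)$-dimensional vector, which is a standard second-order-cone term and can be handled within a QP by the usual epigraph reformulation. Collecting everything, the sub-problem $\mathrm{OPT}_j(\gamma)$ is a QP over $O(Lk^2)$ variables subject to $O(Lk^2)$ constraints.

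The one step that needs a little care — and where I expect the bookkeeping to be most delicate — is making sure the support of the dynamics right-hand side really is captured by $\cC_v(t)$ so that the "trivial rows" argument is valid, rather than merely by the union of $\mathrm{supp}(\hat A\,\Phi_x(t))$, $\mathrm{supp}(\hat B\,\Phi_u(t))$, and $\mathrm{supp}(V(t))$; this is exactly what Lemma~\ref{lem_Cv} is for, and the point is that because we enforce $V(t)\in\cC_v(t)$ with $\cC_v(t)$ chosen (via \eqref{Cv}) to contain $\mathrm{supp}(\hat A X_2 + \hat B X_4)$ for all admissible $X_2,X_4$, the equality \eqref{opt_fj_2} is consistent and its nontrivial rows are indexed by the support of $\cC_v(t)$ in column $j$, which is $O(k^2)$. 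Once that is in hand, every remaining count is a routine sum of $O(k)$ or $O(k^2)$ terms over $O(L)$ time steps, and the $n$-independence follows because $k$ bounds the per-column sparsity uniformly. I would therefore present the argument as: (i) variable count via per-column sparsity of $\Phi$ and $V$; (ii) equality-constraint count via the $\cC_v$ support lemma; (iii) inequality-constraint count via the standard $\ell_1$-to-LP reformulation; (iv) objective via the epigraph trick; then conclude.
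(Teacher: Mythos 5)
Your proposal is correct and follows essentially the same route as the paper: eliminate the zeroed-out coordinates to get $O(Lk^2)$ variables, use the sparsity of $\hat A$, $\hat B$, and $\cC_v$ to show that only $O(k^2)$ rows of each affine equality are nontrivial (the paper does this by counting the nonzero rows of the reduced matrix $\mathbf{M}_j$ via its column sparsity, which is the same bookkeeping as your support argument), and linearize the $\ell_1$ budgets with $O(Lk^2)$ slacks and inequalities. The only small correction is the treatment of the objective: an epigraph reformulation of the Euclidean norm yields a second-order-cone constraint, not a QP; the paper instead minimizes the \emph{squared} norm (cf.\ Lemma~\ref{l_qp}), which preserves the minimizer and makes the problem a genuine QP.
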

	\begin{proof}
		\begin{sloppypar}
			For every $t=0,\dots,L$, let $(\Phi^{n_j}_x(t), \Phi^{n_j}_u(t), V^{n_j}(t))$ correspond to $(\Phi_x(t), \Phi_u(t), V(t))$ after removing the elements that are set to zero via the sparsity constraints~\eqref{opt_f_6} and~\eqref{opt_f_7}. It is easy to see that $\mathrm{OPT}_j(\gamma)$ can be written in terms of $(\{\Phi^{n_j}_x(t)\}, \{\Phi^{n_j}_u(t)\}, \{V^{n_j}(t)\})$ with a total number of $O(Lk^2)$ variables. The rest of the proof is devoted to show how to reduce the number of constraints in $\mathrm{OPT}_j(\gamma)$ to $O(Lk^2)$. 
			Let $\bPhi^{n_j}_x$, $\bPhi^{n_j}_u$, and $\bV^{n_j}$ denote $\sum_{t=1}^{L}\Phi^{n_j}_x(t)z^{-t}$, $\sum_{t=1}^{L}\Phi^{n_j}_u(t)z^{-t}$, and $\sum_{t=0}^{L}V^{n_j}(t)z^{-t}$, respectively. The constraints~\eqref{opt_fj_1}-\eqref{opt_fj_3} can be written compactly as
		\end{sloppypar}
		\begin{equation}
		{\begin{bmatrix}
			zI-\hat{A} & -\hat{B} & -I
			\end{bmatrix}}\begin{bmatrix}
		\bPhi_x\\
		\bPhi_u\\
		\bV
		\end{bmatrix}_{:,j} = I_{:,j}\iff \mathbf{M}_j\begin{bmatrix}
		\bPhi^{n_j}_x\\
		\bPhi^{n_j}_u\\
		\bV^{n_j}
		\end{bmatrix} = I_{:,j}
		\end{equation} 
		\begin{sloppypar}
			Here, $\mathbf{M}_j$ is equal to $\sum_{t=0}^LM_j(t)z^{-t}$, where $M_j(t)$ is defined as $\begin{bmatrix}
			zI-\hat{A} & -\hat{B} & -I
			\end{bmatrix}$, after removing the columns that correspond to the zero elements of $\begin{bmatrix}
			\Phi_x(t)^\top & \Phi_u(t)^\top & V(t)^\top
			\end{bmatrix}_{j,:}^\top$ enforced by the sparsity constraints. The matrix $\mathbf{M}_j$ has at most $n$ rows and $2k^2+k$ columns. On the other hand, every column of $\begin{bmatrix}
			zI-\hat{A} & -\hat{B}
			\end{bmatrix}$ has at most $k+1$ number of nonzero elements. Similarly, every column of $-I$ has exactly one nonzero element. Therefore, a simple calculation yields that $\mathbf{M}_j$ can have at most $3k^2+k$ number of nonzero rows. This together with the definition of $\mathbf{M}_j$ implies that~\eqref{opt_fj_1}-\eqref{opt_fj_3} can be reduced to $O(Lk^2)$ linear constraints. Finally,~\eqref{opt_fj_4} and~\eqref{opt_fj_5} can be trivially written as a set of $O(Lk^2)$ linear inequalities by introducing $O(Lk^2)$ slack variables. This completes the proof.
		\end{sloppypar}
	\end{proof}

It is worthwhile to mention that the above lemma is a generalization to the dimension reduction algorithm introduced in~\cite{wang2017separable}.
	
	\begin{remark}
		{\it Note that for every index $j$, the aforementioned reduced QP can be efficiently constructed in an offline fashion before running Algorithm~\ref{alg:solve} detailed below, provided that the estimated system matrices $(\hat{A},\hat{B})$ and the sparsity constraints~\eqref{opt_fj_6} and~\eqref{opt_fj_7} are given in sparse matrix formats, such as Coordinate list~\cite{golub2012matrix}. While we do not discuss the structure of such representations, we note that the complexity of constructing these reduced QPs is dominated by that of Algorithm~\ref{alg:solve}.} 
	\end{remark}
	
	\begin{remark}
		Without loss of generality, we assume that the proposed optimization~\eqref{opt_f} is finitely-representable on a Turing machine. In other words, the total number of digits required to write (or accurately approximate) the input data for~\eqref{opt_f} is a finite number $D$. This is a common assumption made for the complexity analysis of optimization problems; see e.g.~\cite{Vavasis2001}.
	\end{remark}

	\begin{definition}
		An algorithm solves an optimization problem that is finitely-representable on a Turing machine to $\eta$-accuracy if the following statements hold:
		\begin{itemize}
			\item[-] It returns a feasible solution if and only if the problem is feasible,
			\item[-] Upon feasibility, it returns a feasible solution whose objective value is greater than the optimal objective value by no more than $\eta$.
		\end{itemize}
	\end{definition}
	
	\begin{sloppypar}
		Algorithm~\ref{alg:solve} delineates the proposed method for solving~\eqref{opt_f}. In particular, it uses a golden-section search method to optimize over the scalar variable $\gamma$, while solving multiple small QPs at each iteration to obtain $g(\gamma)$. At any iteration, $g(\gamma)$ is set to $+\infty$ if at least one of $\mathrm{OPT}_1(\gamma),\dots,\mathrm{OPT}_n(\gamma)$ is infeasible. Suppose $g(\gamma)$ has the domain $[\gamma_0,+\infty)$ for some $\gamma_0\geq 0$. It is easy to verify that a finite value for $\gamma_0$ always exists; however, $\gamma_0<1$ is required for~\eqref{opt_f} to be feasible.
	\end{sloppypar}
	\begin{algorithm}
		\caption{}
		\label{alg:solve}
		\begin{algorithmic}[1]
			\STATE{{\bf input:} Estimates $\hat{A}$, $\hat{B}$, estimation error $\bar{\epsilon}$, and accuracy parameters $\eta_1$, and $\eta_2$}
			\STATE{{\bf output:} $\{\Phi_x(t)\}$, $\{\Phi_u(t)\}$, $\{V(t)\}$, and $g(\gamma)$}
			\STATE{obtain $g(1)$ by solving $n$ sub-problems $\mathrm{OPT}_1(1),\dots,\mathrm{OPT}_n(1)$ to $\frac{\eta_2}{n}$-accuracy using interior point method.}
			\IF{$g(1) = +\infty$}
			\RETURN{Infeasible}
			\ELSE
			\STATE{{\bf set} $\gamma_a \leftarrow 0$, $\gamma_b \leftarrow 1$, $\gamma_c \leftarrow 1-\frac{2}{1+\sqrt{5}}$, and $\gamma_d \leftarrow \frac{2}{1+\sqrt{5}}$}
			\WHILE{$|\gamma_b-\gamma_a|>\eta_1$}
			\STATE{Solve $\mathrm{OPT}(\gamma_c)$ by solving $n$ sub-problems $\mathrm{OPT}_1(\gamma_c),\dots,\mathrm{OPT}_n(\gamma_c)$ to $\frac{\eta_2}{n}$-accuracy using interior point method. Let the corresponding objective value be denoted as $g_{\mathrm{ap}}(\gamma_c)$.}
			\STATE{Solve $\mathrm{OPT}(\gamma_d)$ by solving $n$ sub-problems $\mathrm{OPT}_1(\gamma_d),\dots,\mathrm{OPT}_n(\gamma_d)$ to $\frac{\eta_2}{n}$-accuracy using interior point method. Let the corresponding objective value be denoted as $g_{\mathrm{ap}}(\gamma_d)$.}
			\IF{$\frac{g_{\mathrm{ap}}(\gamma_c)}{1-\gamma_c}<\frac{g_{\mathrm{ap}}(\gamma_d)}{1-\gamma_d}$}
			\STATE{{\bf set} $\gamma_b \leftarrow \gamma_d$}
			\ELSE{}
			\STATE{{\bf set} $\gamma_a \leftarrow \gamma_c$}
			\ENDIF
			\STATE{$\gamma_c \leftarrow \gamma_b-\frac{2}{1+\sqrt{5}}(\gamma_b-\gamma_a)$ and $\gamma_d \leftarrow \gamma_a+\frac{2}{1+\sqrt{5}}(\gamma_b-\gamma_a)$}
			\ENDWHILE
			\STATE{$\bar\gamma\leftarrow(\gamma_a+\gamma_b)/2$}
			\STATE{obtain $(\{\bar\Phi_x(t)\},\{\bar\Phi_u(t)\},\{\bar V(t)\}, g(\bar\gamma))$ by solving $n$ sub-problems $\mathrm{OPT}_1(\bar\gamma),\dots,\mathrm{OPT}_n(\bar\gamma)$ to $\frac{\eta_2}{n}$-accuracy using interior point method. Let the corresponding objective value be denoted as $g_{\mathrm{ap}}(\bar\gamma)$.}
			\IF{$g_{\mathrm{ap}}(\bar\gamma)=+\infty$}
			\RETURN{Infeasible}
			\ELSE
			\RETURN{$(\{\bar\Phi_x(t)\},\{\bar\Phi_u(t)\},\{\bar V(t)\}, \bar\gamma)$}
			\ENDIF
			\ENDIF
		\end{algorithmic}
	\end{algorithm}
	
	Define $\underline{t}$ and $\overline{t}$ as the smallest and largest integers such that
	\begin{align}
	\underline{\eta_1} = \left(\frac{2}{1+\sqrt{5}}\right)^{\underline{t}} \leq \eta_1,\ \ \overline{\eta_1} = \left(\frac{2}{1+\sqrt{5}}\right)^{\overline{t}} > \eta_1
	\end{align}
	Furthermore, define 
	\begin{align}\label{const}
	\Delta_\gamma = \left(\frac{4}{1+\sqrt{5}}-1\right)\overline{\eta_1}
	\end{align}
	Let $g_{\mathrm{ap}}(\gamma_c)$ and  $g_{\mathrm{ap}}(\gamma_d)$ denote the objective values of the problems $\mathrm{OPT}(\gamma_c)$ and $\mathrm{OPT}(\gamma_c)$ when they are solved to $\eta_2$-accuracy. At each iteration, Algorithm~\eqref{alg:solve} shrinks the interval $[\gamma_a,\gamma_b]$ by comparing the values of $\frac{g_{\mathrm{ap}}(\gamma_c)}{1-\gamma_c}$ and $\frac{g_{\mathrm{ap}}(\gamma_d)}{1-\gamma_d}$, while ensuring that $\gamma^L\in [\gamma_a,\gamma_b]$. However, notice that $g_{\mathrm{ap}}(\gamma_c)$ and $g_{\mathrm{ap}}(\gamma_d)$ are the approximations of $g(\gamma_c)$ and $g(\gamma_d)$, where the possible approximation error is due to the limited accuracy of the interior point method. The incurred error in the computation of $g(\gamma_c)$ and $g(\gamma_d)$ may be aggregated and result in wrong comparisons between their actual values, thereby violating $\gamma^L\in [\gamma_a,\gamma_b]$. To avoid such wrong comparisons, one needs to ensure that the approximation errors $g_{\mathrm{ap}}(\gamma_c)-g(\gamma_c)$ and $g_{\mathrm{ap}}(\gamma_d)-g(\gamma_d)$ are appropriately controlled at every iteration of the algorithm; this will be shown in the next theorem. In particular, we will show how to control the accuracy of the used interior point method for solving the sub-problems $\mathrm{OPT}_j(\gamma_c)$ and $\mathrm{OPT}_j(\gamma_d)$ in order to ensure $\gamma^L\in [\gamma_a,\gamma_b]$ at every iteration of the algorithm. Define the quantity
	\begin{align}\label{min_diff}
	\Delta_g = \min_{\gamma\in[\gamma_0,\gamma^L-\Delta_\gamma]\cup[\gamma^L,1-\Delta_\gamma)}\left|\frac{g(\gamma+\Delta_\gamma)}{1-(\gamma+\Delta_\gamma)}-\frac{g(\gamma)}{1-\gamma}\right|.
	\end{align}
	According to the Proposition~\ref{prop_unimodal}, the function $\frac{g(\gamma)}{1-\gamma}$ is strictly monotone in the intervals $[\gamma_0,\gamma^L]$ and $[\gamma^L,1)$ which implies that $\Delta_g>0$.
	\begin{sloppypar}
		\begin{theorem}\label{thm:runtime}
			Suppose that the input data for~\eqref{opt_f} can be represented with $D$ digits, and that $\eta_2$ satisfies $D\leq C\log(1/\eta_2)$ for a universal constant $C$. Then, Algorithm~\ref{alg:solve} terminates in $O(L^{3.5}k^7\!n\log(n)\!\log(1/\eta_1)\!\log(1/\eta_2))$ time. In particular:
			\begin{itemize}
				\item[1.] If $\gamma_0\leq 1-\underline{\eta_1}/2$ and $\eta_2\leq \min\left\{\frac{2}{1+\sqrt{5}}\Delta_g\underline{\eta_1},\underline{\eta_1}^2\right\}$, then the algorithm returns a feasible solution with $|\bar{\gamma}-\gamma^L|\leq\underline{\eta_1}/2$. Furthermore,
				\begin{align}\label{upperbound}
				\frac{g_{\mathrm{approx}}(\bar\gamma)}{1-\bar{\gamma}}-\frac{g(\gamma^L)}{1-\gamma^L}\leq \left(\frac{g(\gamma_0)}{2(1-\gamma^L)^2\gamma^L}+2\right)\underline{\eta_1}
				\end{align}
				provided that $\underline{\eta_1}\leq 2(1-\gamma^L)^2$.
				\item[2.] If $\gamma_0>1-\underline{\eta_1}/2$, then the algorithm declares infeasibility.
			\end{itemize}
		\end{theorem}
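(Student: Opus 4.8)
I would organize the proof around the running-time count and a bracketing invariant for $\gamma^L$.

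\textbf{Running time.} Each pass of the \texttt{while}-loop of Algorithm~\ref{alg:solve} multiplies $\gamma_b-\gamma_a$ by the constant $\tfrac{2}{1+\sqrt5}$, starting from $1$, so the loop runs $\underline{t}=\Theta(\log(1/\eta_1))$ times; together with the single evaluations of $g(1)$ and $g(\bar\gamma)$ this is $O(n\log(1/\eta_1))$ calls to the column subproblems $\mathrm{OPT}_j(\cdot)$. By Lemma~\ref{l_reducedQP} each such call is a QP in $O(Lk^2)$ variables and constraints, and the assumption $D\le C\log(1/\eta_2)$ keeps the bit-length of its data at $O(\log(1/\eta_2))$, so an interior-point method reaches $\tfrac{\eta_2}{n}$-accuracy in $O\bigl((Lk^2)^{3.5}\log(n/\eta_2)\bigr)$ arithmetic operations (absorbing lower-order logarithmic factors). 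Multiplying and using $\log(n/\eta_2)\le\log(n)\log(1/\eta_2)$ yields the claimed $O(L^{3.5}k^7 n\log(n)\log(1/\eta_1)\log(1/\eta_2))$.

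\textbf{Bracketing invariant.} Extend $h(\gamma):=g(\gamma)/(1-\gamma)$ to $[0,1)$ by $h\equiv+\infty$ on $[0,\gamma_0)$. By Proposition~\ref{prop_unimodal} and monotonicity of $g$, $h$ equals $+\infty$ on $[0,\gamma_0)$, is strictly decreasing on $[\gamma_0,\gamma^L]$ and strictly increasing on $[\gamma^L,1)$, with unique minimizer $\gamma^L$ whenever \eqref{opt_f} is feasible (that is, $g(1)<+\infty$, exactly the test performed before the loop; if $g(1)=+\infty$ then $\gamma_0>1$ and the algorithm correctly declares infeasibility). I would prove by induction that $\gamma^L\in[\gamma_a,\gamma_b]$ after every pass. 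The golden-section geometry gives, at each pass, $\gamma_d-\gamma_c=\bigl(\tfrac{4}{1+\sqrt5}-1\bigr)(\gamma_b-\gamma_a)\ge\Delta_\gamma$ and $1-\gamma_d\ge\bigl(1-\tfrac{2}{1+\sqrt5}\bigr)\overline{\eta_1}=\tfrac{2}{1+\sqrt5}\underline{\eta_1}$. The $\ell_2$-aggregation $g_{\mathrm{ap}}(\gamma)=\sqrt{\sum_j g_{\mathrm{ap},j}(\gamma)^2}$ of $\tfrac{\eta_2}{n}$-accurate column solves obeys $0\le g_{\mathrm{ap}}(\gamma)-g(\gamma)\le\eta_2/\sqrt{n}$, so the probed values $g_{\mathrm{ap}}(\gamma)/(1-\gamma)$ deviate from $h(\gamma)$ by less than $\Delta_g/2$ once $\eta_2\le\tfrac{2}{1+\sqrt5}\Delta_g\underline{\eta_1}$. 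Hence: if $\gamma_c,\gamma_d$ lie on the same strictly monotone branch of $h$ they are $\ge\Delta_\gamma$ apart, so by the definition \eqref{min_diff} of $\Delta_g$ the true gap $|h(\gamma_c)-h(\gamma_d)|$ is $\ge\Delta_g$ and the approximate comparison agrees with the exact one, retaining the branch that contains $\gamma^L$; if $\gamma_c,\gamma_d$ straddle $\gamma^L$, either outcome keeps $\gamma^L$ bracketed; and if $\gamma_c<\gamma_0$ (so $h(\gamma_c)=+\infty$, and then also any infeasible $\gamma_d$), the comparison ``$+\infty<\cdot$'' deterministically sets $\gamma_a\leftarrow\gamma_c$, which is safe since $\gamma_c<\gamma_0\le\gamma^L\le\gamma_b$. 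At termination $\gamma_b-\gamma_a=\underline{\eta_1}$, so $|\bar\gamma-\gamma^L|\le\underline{\eta_1}/2$.

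\textbf{Feasibility dichotomy.} While every probed point is infeasible, the algorithm always sets $\gamma_a\leftarrow\gamma_c$, so $\gamma_a$ runs along the grid $\{1-(\tfrac{2}{1+\sqrt5})^i\}$ while $\gamma_b$ stays at $1$; the largest point ever probed is $\gamma_d=1-(\tfrac{2}{1+\sqrt5})^{\underline t+1}=1-\tfrac{2}{1+\sqrt5}\underline{\eta_1}$. If $\gamma_0>1-\underline{\eta_1}/2$, then since $\tfrac{2}{1+\sqrt5}>\tfrac12$ we get $\gamma_0>1-\tfrac{2}{1+\sqrt5}\underline{\eta_1}$, so every probe is infeasible, the loop ends with $[\gamma_a,\gamma_b]=[\,1-\underline{\eta_1},\,1\,]$, $\bar\gamma=1-\underline{\eta_1}/2<\gamma_0$, $\mathrm{OPT}(\bar\gamma)$ is infeasible, and the algorithm declares infeasibility; this is part~2. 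If $\gamma_0\le1-\underline{\eta_1}/2$, then either the same all-infeasible scenario occurs and $\bar\gamma=1-\underline{\eta_1}/2\ge\gamma_0$, or at some pass a feasible probe $\gamma_c\ge\gamma_0$ is encountered; in the latter case, using that the bracket always contains $\gamma^L>\gamma_0$ (so $\gamma_b$ can never drop below $\gamma^L$ while $\gamma_a$ is nondecreasing and collapses the bracket toward its right endpoint), one shows $\gamma_a$ is eventually pushed to a value $\ge\gamma_0$ and stays there (once $\gamma_a\ge\gamma_0$ the point $\gamma_c>\gamma_a$ is feasible too). Either way $\bar\gamma\ge\gamma_a\ge\gamma_0$, so $\mathrm{OPT}(\bar\gamma)$ is feasible, the returned point is the solution of $\mathrm{OPT}(\bar\gamma)$ (unique by Lemma~\ref{l_unique}), and $|\bar\gamma-\gamma^L|\le\underline{\eta_1}/2$; this is part~1.

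\textbf{Objective estimate and the main obstacle.} Write $\tfrac{g_{\mathrm{approx}}(\bar\gamma)}{1-\bar\gamma}-\tfrac{g(\gamma^L)}{1-\gamma^L}=\tfrac{g_{\mathrm{ap}}(\bar\gamma)-g(\bar\gamma)}{1-\bar\gamma}+\bigl(h(\bar\gamma)-h(\gamma^L)\bigr)$. From $\gamma_b\le1$ we get $1-\bar\gamma\ge\underline{\eta_1}/2$, and $g_{\mathrm{ap}}(\bar\gamma)-g(\bar\gamma)\le\eta_2\le\underline{\eta_1}^2$, so the first term is $\le2\underline{\eta_1}$. For the second, $\gamma^L$ minimizes $h$ over the feasible range so it is nonnegative; using that $g$ is nonincreasing with $g\le g(\gamma_0)$ on $[\gamma_0,1)$ and that $|\bar\gamma-\gamma^L|\le\underline{\eta_1}/2\le(1-\gamma^L)^2$ forces $1-\bar\gamma\ge(1-\gamma^L)-(1-\gamma^L)^2=\gamma^L(1-\gamma^L)$, a short computation (splitting into the cases $\bar\gamma\ge\gamma^L$ and $\bar\gamma\le\gamma^L$ and in each using monotonicity of $g$) gives $h(\bar\gamma)-h(\gamma^L)\le g(\gamma_0)\bigl|\tfrac{1}{1-\bar\gamma}-\tfrac{1}{1-\gamma^L}\bigr|\le\tfrac{g(\gamma_0)}{2\gamma^L(1-\gamma^L)^2}\underline{\eta_1}$; adding the two contributions is exactly \eqref{upperbound}. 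The hard part of the whole proof is the bracketing step together with locating $\bar\gamma$ relative to $\gamma_0$: one must show that truncating the interior-point solver to accuracy $\eta_2$ never flips a comparison that matters — precisely what $\Delta_g$ and the constraint $\eta_2\le\tfrac{2}{1+\sqrt5}\Delta_g\underline{\eta_1}$ buy — and then track the grid of probe points finely enough to see that the infeasibility threshold is \emph{exactly} $\gamma_0=1-\underline{\eta_1}/2$; the remainder reduces to elementary estimates on the convex, nonincreasing value function $g$ and on the golden-section recursion.
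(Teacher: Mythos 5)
Your proof follows essentially the same route as the paper's: the same iteration-count-times-QP-cost accounting for the runtime, the same bracketing invariant $\gamma^L\in[\gamma_a,\gamma_b]$ protected by $\Delta_\gamma$, $\Delta_g$ and the $\tfrac{\eta_2}{n}$-accurate column solves, and the same split of the final error into a $2\underline{\eta_1}$ solver term and a $g(\gamma_0)\left|\tfrac{1}{1-\bar\gamma}-\tfrac{1}{1-\gamma^L}\right|$ term (your explicit handling of infeasible probes inside the loop is a welcome addition the paper leaves implicit). The one quibble is your claim that the probed values of $g_{\mathrm{ap}}(\gamma)/(1-\gamma)$ deviate from the true values by less than $\Delta_g/2$ — the hypotheses only give $\eta_2/(1-\gamma_d)\le\Delta_g$ — but because the error is one-sided ($g_{\mathrm{ap}}\ge g$), a deviation of at most $\Delta_g$ on the lower probe alone suffices to prevent a harmful flip, exactly as in the paper, so the argument stands.
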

	\end{sloppypar}
\begin{proof}
	See Appendix~\ref{p_thm:runtime}.
\end{proof}

	\subsection{Bootstrapping:}\label{sec:boot}
	Recall that formulating the optimization problem~\eqref{opt_f} relies on the availability of the upper bound $\bar{\epsilon}$ on the actual estimation error $\epsilon = \max\{\|\hat{A}-\trueA\|_2, \|\hat{B}-\trueB\|_2\}$. It is evident from~\eqref{opt_f} that the performance (and even feasibility) of the proposed control design method heavily relies on the conservativeness of $\bar{\epsilon}$: a large value for $\bar{\epsilon}$ results in more restrictive constraints on the system responses. Although in some applications, an upper bound for $\epsilon$ may be readily available based on the domain knowledge, its value may be too conservative for practical purposes. A simple method to alleviate this issue is to resort to a bootstrap approach, where the goal is to \textit{estimate the estimation error}, merely based on the available data samples. In particular, given the estimates $\hat{A}$ and $\hat{B}$, we draw sample trajectories from the empirical distribution induced by $(\hat{A},\hat{B})$ in $N$ rounds. Using these synthetically generated sample trajectories at each round $i$, we re-estimate the system dynamics $\hat{A}^{(i)}$ and $\hat{B}^{(i)}$. Finally, an upper bound on the estimation error is obtained by setting $\bar{\epsilon}$ as $100\times (1-\delta)$ percentile of $\max\{\|\hat{A}^{(i)}-\hat{A}\|_2, \|\hat{B}^{(i)}-\hat{B}\|_2\}, i= 1,\dots, N$, for some parameter $\delta>0$. Roughly speaking, the obtained estimation error is an upper bound on the actual one with probability of at least $1-\delta$. Similar bootstrap methods are widely used for estimating various characteristics of estimators, such as their bias, variance, etc. A more detailed analysis on bootstrap methods can be found in~\cite{efron1994introduction, hall2013bootstrap, shao2012jackknife}. 
	
	Algorithm~\ref{alg:bootstrap} describes the proposed method for obtaining $\bar{\epsilon}$. In this algorithm, the matrix $M$ is defined as~\eqref{true_M}, where $P$ refers to the solution of the Lyapunov equation~\eqref{true_P} after replacing the true system matrices with the estimated ones.
	
	\begin{algorithm}
		\caption{}
		\label{alg:bootstrap}
		\begin{algorithmic}[1]
			\STATE{{\bf input:} Initial state $x_0$, estimates $\hat{A},\hat{B}$, initial controller $K_0$, distribution parameters $\eta_w$, $\eta_v$, ${M}$, confidence parameter $\delta$, and number of rounds $N$}
			\STATE{{\bf output:} upper bound on the estimation error $\bar{\epsilon}$}
			\FOR{$i$ in $\{1,\dots, N\}$}
			\STATE{$x(0)\sim \mathcal{N}(0,M)$}
			\FOR{$\tau$ in $\{0,\dots, T-1\}$}
			\STATE{$u(\tau)\leftarrow K_0x(\tau)+v(\tau)$, where $v(\tau)\sim\mathcal{N}(0,\eta_v^2I)$}
			\STATE{$x(\tau+1)\leftarrow \hat{A}x(\tau)+\hat{B}u(\tau)+w(\tau)$ where $w(\tau)\sim \mathcal{N}(0,\eta_w^2I)$}
			\ENDFOR
			\STATE{Obtain $(\hat{A}^{(i)},\hat{B}^{(i)})$ by solving $\texttt{LASSO}(1,1:T-1)$ with $\left(\{x(\tau)\}_{\tau = 0}^T, \{u(\tau)\}_{\tau = 0}^{T-1}\right)$ as input}
			\STATE{$\bar{\epsilon}^{(i)}\leftarrow \max\{\|\hat{A}^{(i)}-\hat{A}\|, \|\hat{B}^{(i)}-\hat{B}\|\}$}
			\ENDFOR
			\RETURN{$\bar{\epsilon}$ as the $100\times (1-\delta)$ percentile of $\left\{\bar{\epsilon}^{(i)}\right\}_{i=1}^N$.}
		\end{algorithmic}
	\end{algorithm}

\section{Numerical Results:}
\begin{sloppypar}
	To illustrate the effectiveness of the developed control design framework, we focus on a class of graph Laplacian systems with \textit{chain} structures. Let the scalars $x_i(t)$, $u_i(t)$, and $w_i(t)$ denote the state, input, and the disturbance corresponding to the subsystem $i$.
	Consider the following dynamics:
\end{sloppypar}
%
\vspace{-2mm}
\begin{align}\label{ss_chain}
x_i(t+1) &= (D_i\!+\!1\!-\!2a_i) x_{i}(t) \!+\! a_i(x_{i-1}(t)\!+\!x_{i+1}(t)) \!+\! b_i u_i(t) \!+\! w_i(t) && \text{if}\ 2\leq i\leq n-1\nonumber\\
x_i(t+1) &= (D_i+1-a_i) x_{i}(t) + a_i x_{i-1}(t) + b_i u_i(t) + w_i(t) && \text{if}\ i= n\nonumber\\
x_i(t+1) &= (D_i+1-a_i) x_{i}(t) + a_i x_{i+1}(t) + b_i u_i(t) + w_i(t) && \text{if}\ i= 1\nonumber\\
\end{align}
\begin{figure}
	\centering
	\includegraphics[width=.4\columnwidth]{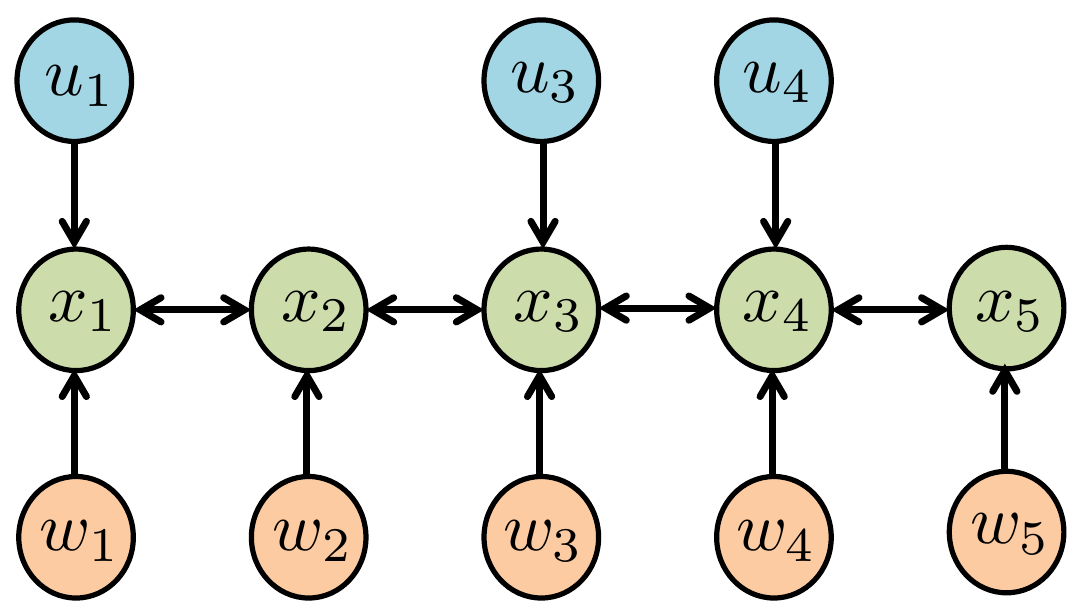}
	\caption{ \footnotesize A realization of the graph Laplacian systems with chain structures. The number of state and input signals are equal to 5 and 3, respectively.}
	\label{chain}
\end{figure}
where $D_i$ and $a_i$ are scalar numbers, and $b_i$ is a binary number taking the value 1 only if subsystem $i$ is directly controlled by an input signal; see Figure~\ref{chain} for a simple realization of this model. We assume that $w(t)\sim\mathcal{N}(0,I)$ in all of our experiments. Inspired by the exponential decay of the truncation error with respect to the FIR length $L$ in Theorem~\ref{thm_FIR}, we set the parameter $\alpha$ in~\eqref{opt_f} to $1.2^{-L}$ throughout our simulations. Similar to~\cite{wang2014localized}, we assume that the control structure is \textit{local} and subject to \textit{communication delays}, both of which can be translated to sparsity constraints on the system responses. In particular, given the locality parameter $d$, we are interested in designing a control structure with the property that the effect of a disturbance signal $w_i(t)$ hitting subsystem $i$ is localized to a region defined by its $d$-hop neighbors. Furthermore, given the communication speed parameter $c$, the sub-controllers can interact $c$ times faster than their corresponding subsystems. In particular, given the subsystems $i$ and $j$ with $b_i = b_j = 1$ and $|i-j| = k$, the control action $u_i(t)$ can use $x_j(\tau)$ and $u_j(\tau)$, provided that $\tau\leq (t-k)/c$. 
The local and communication constraints can be translated into sparsity constraints on the system responses. In particular, define
\begin{align}\label{struct}
&\cC_x(t) = \mathcal{S}\left(\mathrm{supp}(A)^{\min\{d-1,\max\{0,c(t-1)\}\}}\right)\\
&\cC_u(t) = \mathcal{S}\left(\mathrm{supp}(B)^\top\cdot\mathrm{supp}(A)^{\min\{d-1,\max\{0,c(t-1)\}\}}\right)
\end{align}  
for every $t\in\{1,\dots, L\}$. Then, the constraints $\Phi_x(t)\in\cC_x(t)$ and $\Phi_u(t)\in\cC_u(t)$ imply that the resulted controller satisfies the prescribed local and communication constraints. More details on these derivations can be found in~\cite{wang2014localized}. As an example, Figure~\ref{sysres} shows the sparsity patterns of the system responses for $d = 5$ and $c = 2$. 

All the simulations in this section are run on a laptop computer with an Intel Core i7 quad-core 2.50 GHz CPU and 16GB RAM. The reported results are for a serial implementation in MATLAB using the CVX framework and the MOSEK solver with default settings.

\begin{figure*}
	\centering
	\subfloat{\label{sysres1}
		\includegraphics[width=.22\columnwidth]{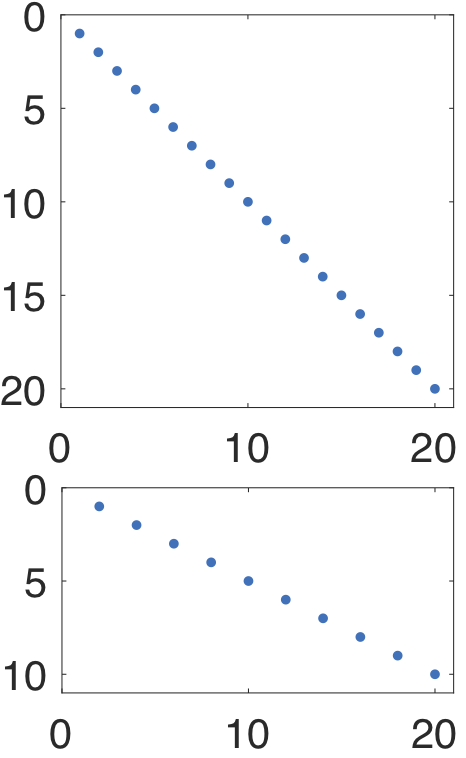}}
	\hspace{0cm}
	\subfloat{\label{sysres2}
		\includegraphics[width=.22\columnwidth]{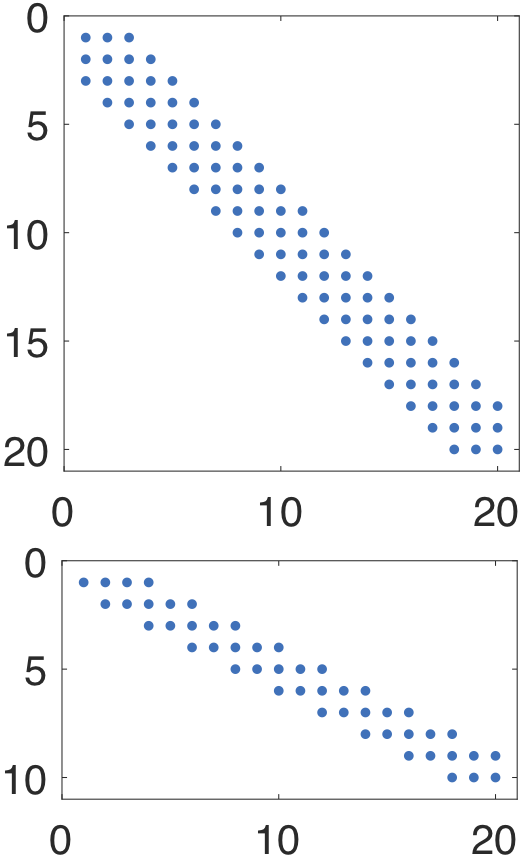}}
	\hspace{0cm}
	\subfloat{\label{sysres3}
		\includegraphics[width=.216\columnwidth]{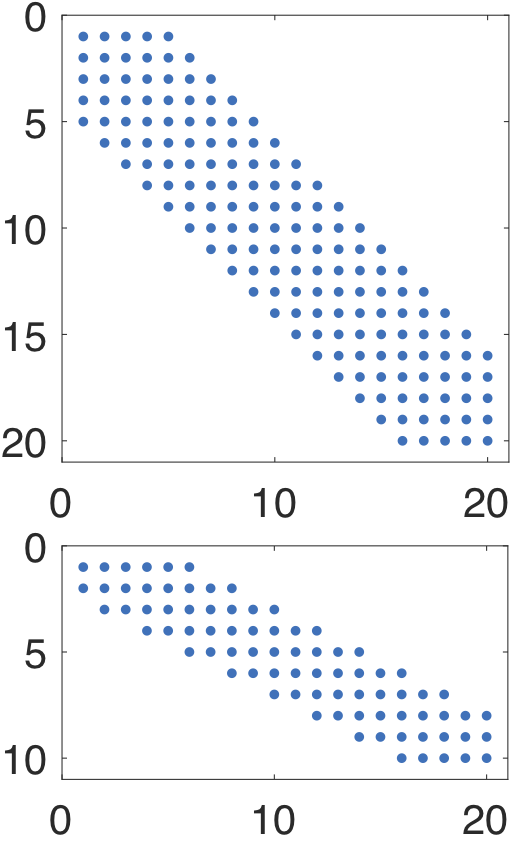}}
	\hspace{0cm}
	\subfloat{\label{sysres4}
		\includegraphics[width=.22\columnwidth]{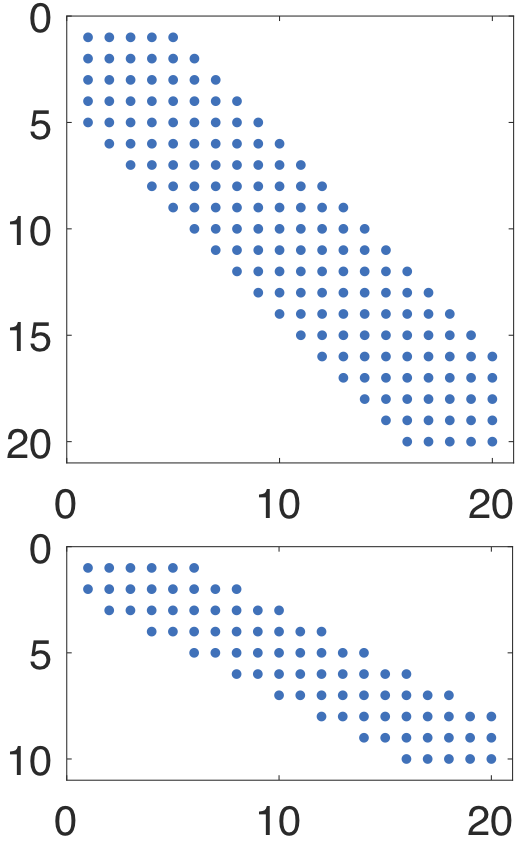}}
	
	\caption{ \footnotesize The sparsity pattern of the system responses $\{\Phi_x(t)\}_{t=1}^4$ and $\{\Phi_u(t)\}_{t=1}^4$ when $d = 5$ and $c = 4$. We assume that $n = 20$ and $b_i = 1$ for every other sub-system. The top row (from left to right) shows the sparsity patterns of $\Phi_x(1),\dots, \Phi_x(4)$. The bottom row (from left to right) shows the sparsity patterns of $\Phi_u(1),\dots, \Phi_u(4)$.}
	\label{sysres}
\end{figure*}

\subsection{Stability analysis} In the first experiment, we consider a small-scale instance of the problem and study the robustness of the designed controller with respect to the uncertainties in the model. In particular, the considered system has $8$ states, $m$ of which are randomly chosen and equipped with input signals, for $m \in\{5,6,7,8\}$. We choose $a_i = 1/3$ for every $i\in\{1,\dots, 8\}$. In order to make the open-loop system marginally unstable, we set $D_i = 0.05$ for $i\in\{2,\dots,7\}$ and $D_1 = D_8 = 0.05-1/3$. We also assume $10\%$ element-wise uncertainty in the estimated system matrices $\hat{A}$ and $\hat{B} $. In other words, $\hat{A}_{ij}$ is randomly chosen from the interval $[{\trueA}_{ij}-0.1|{\trueA}_{ij}|, {\trueA}_{ij}+0.1|{\trueA}_{ij}|]$  for every $(i,j)\in\{1,\dots,8\}^2$. Similarly, $\hat{B}_{kl}$ is randomly chosen from the interval $[{\trueB}_{kl}-0.1|{\trueB}_{kl}|, {\trueB}_{kl}+0.1|{\trueB}_{kl}|]$ for every $(k,l)\in\{1,\dots,8\}\times \{1,\dots, m\}$. Finally, assume that the estimation error $\epsilon=\max\{\|\hat{A}-A_\star\|_2, \|\hat{B}-B_\star\|_2\}$ is known. Later, we will relax these assumptions and estimate $\hat{A}$, $\hat{B}$, and $\epsilon$ directly from the sample trajectories, using the system identification and bootstrap methods that are introduced in Subsections~\ref{sec:sysID} and~\ref{sec:boot}.
The FIR length $L$ is set to $10$. Finally, we set the locality parameter $d$ and the communication speed parameter $c$ to $3$ and $2$, respectively.

The goal in this simulation is to illustrate the robustness of the introduced distributed controller, compared to the \textit{nominal} distributed (designed based on localized SLS approach in~\cite{wang2014localized}) and centralized controllers (designed using Ricatti equations) that treat $\hat{A}$ and $\hat{B}$ as the true parameters of the system without taking into account their estimation errors.\footnote{Note that the nominal controller is also known as \textit{certainty equivalent controller} in the literature; see~\cite{aastrom2013adaptive,mania2019certainty}.} For each input dimension $m\in\{5,6,7,8\}$, we generate 100 independent instances of the problem and design the robust distributed, nominal distributed, and nominal centralized controllers. Figure~\ref{fig_stab} shows the ratio of the instances for which each controller stabilizes the system. As can be seen, the proposed robust distributed controller outperforms the nominal distributed controller when $m$ is equal to 6,7, and 8. In particular, the nominal distributed controller either did not exist or failed to stabilize the true system for $100\%$ and $98\%$ of the instances when $m$ is equal to 6 and 7, significantly underperforming compared to the robust distributed controller. Furthermore, the decrease in $m$ deteriorated the performance of the nominal and robust distributed controllers. In particular, for $m = 5$, both controllers ceased to exist for all of the instances. This is indeed not a surprising observation: roughly speaking, designing a distributed controller with restrictive conditions on its locality and communication speed becomes harder as the input dimension decreases. On the other hand, the centralized controller stabilized the true system for $70\%$ of the instances. Notice that this controller is free of local and communication constraints and hence, its success rate is independent of the input dimension. Overall, the proposed robust distributed controller outperforms the nominal distributed and centralized controllers, provided that the input dimension is not too small. 

Another benefit of the proposed controller compared to its nominal counterparts is its ability to identify whether there is ``too much uncertainty'' in the model. In particular, the infeasibility of the proposed optimization problem~\eqref{opt_f} implies that the estimation error in the model is too large to be accommodated by a robust controller; indeed, such information cannot be inferred by a nominal controller since it is oblivious to the uncertainties in the model.

\begin{figure}
	\centering
	\includegraphics[width=.5\columnwidth]{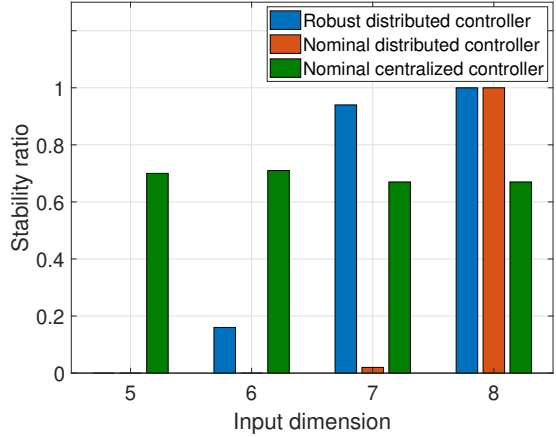}
	\caption{ \footnotesize The ratio of the robust distributed, nominal distributed, and nominal centralized controllers that stabilize the true system.}
	\label{fig_stab}
\end{figure}

\subsection{End-to-end performance}

\begin{sloppypar}
	Next, we showcase the end-to-end performance of the proposed robust distributed controller in larger systems. Given a graph Laplacian system, we assume that its dynamics are unknown and first identify the system matrices with a single sample trajectory using the proposed Lasso-based estimator~\eqref{lasso}. Then, we obtain an upper bound on the estimation error using the bootstrap method introduced in Algorithm~\ref{alg:bootstrap}. Finally, we design the robust distributed controller using Algorithm~\ref{alg:solve}. 
\end{sloppypar}

Consider the system dynamics~\eqref{ss_chain} with $n = 40$, where each subsystem is equipped with an input signal (i.e. $\trueB = I$). Assume that $D_i = 0$ and $a_i = 0.2$ for every $i\in\{1,\dots,n\}$. We further multiply the resulting matrix $\trueA$ by $0.99$ in order to make it marginally stable. To identify the dynamics, we excite the system with a sequence of randomly generated input signals $u(t)\sim \mathcal{N}(0, 0.1I)$ for $t = 0,1,\dots, T$. The initial controller $K_0$ is set to zero since the open-loop system is stable. After estimating the system dynamics, we obtain the bootstrapped estimation error using Algorithm~\ref{alg:bootstrap} with the confidence parameter $\delta = 0.05$ and the number of rounds $N = 500$. 

Figure~\ref{est} shows the true and bootstrapped estimation errors with respect to the learning time $T$. It can be seen that the bootstrapped error is a reliable upper bound on the true estimation error. Given the estimated system matrices and the bootstrapped error, we design the robust distributed controller using Algorithm~\ref{alg:solve}. Figure~\ref{perform} illustrates the end-to-end performance of the designed controller with respect to the learning time $T$ and for different FIR lengths $L$, compared to the oracle cost\footnote{To obtain the oracle cost, we solved the oracle optimization~\eqref{eq22} to near-optimality after restricting the system responses to FIR filters with length 100. We empirically observed that a further increase in the FIR length has little to no effect on the controller cost}. It can be seen that the designed distributed controller performs similarly to the oracle one, even when learning time $T$ is as short as $150$, which is approximately equal to the number of nonzero elements in $(\trueA,\trueB)$. Furthermore, the performance of the controller improves as the estimation error shrinks or, equivalently, the learning time increases. Furthermore, there is a non-negligible improvement in the performance of the designed controller if the FIR length is increased from 4 to 8. However, the improvement in performance is marginal if the FIR length is increased from 8 to 12, indicating that the $L = 8$ is a reasonable choice for the designed distributed controller.

\begin{figure*}
	\centering
	\subfloat[Estimation errors]{\label{est}
		\includegraphics[width=.48\columnwidth]{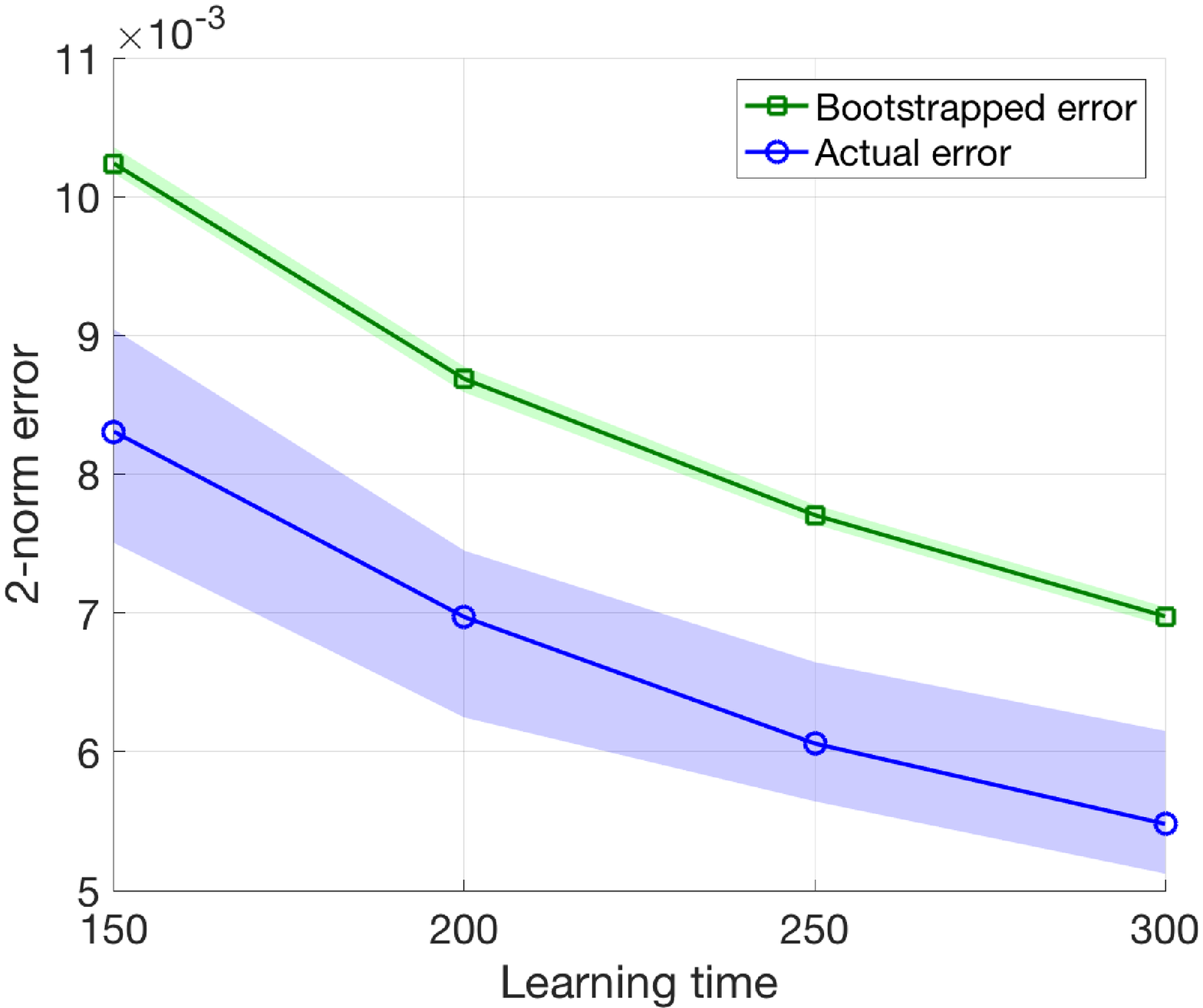}}
	\hspace{0cm}
	\subfloat[Performance]{\label{perform}
		\includegraphics[width=.48\columnwidth]{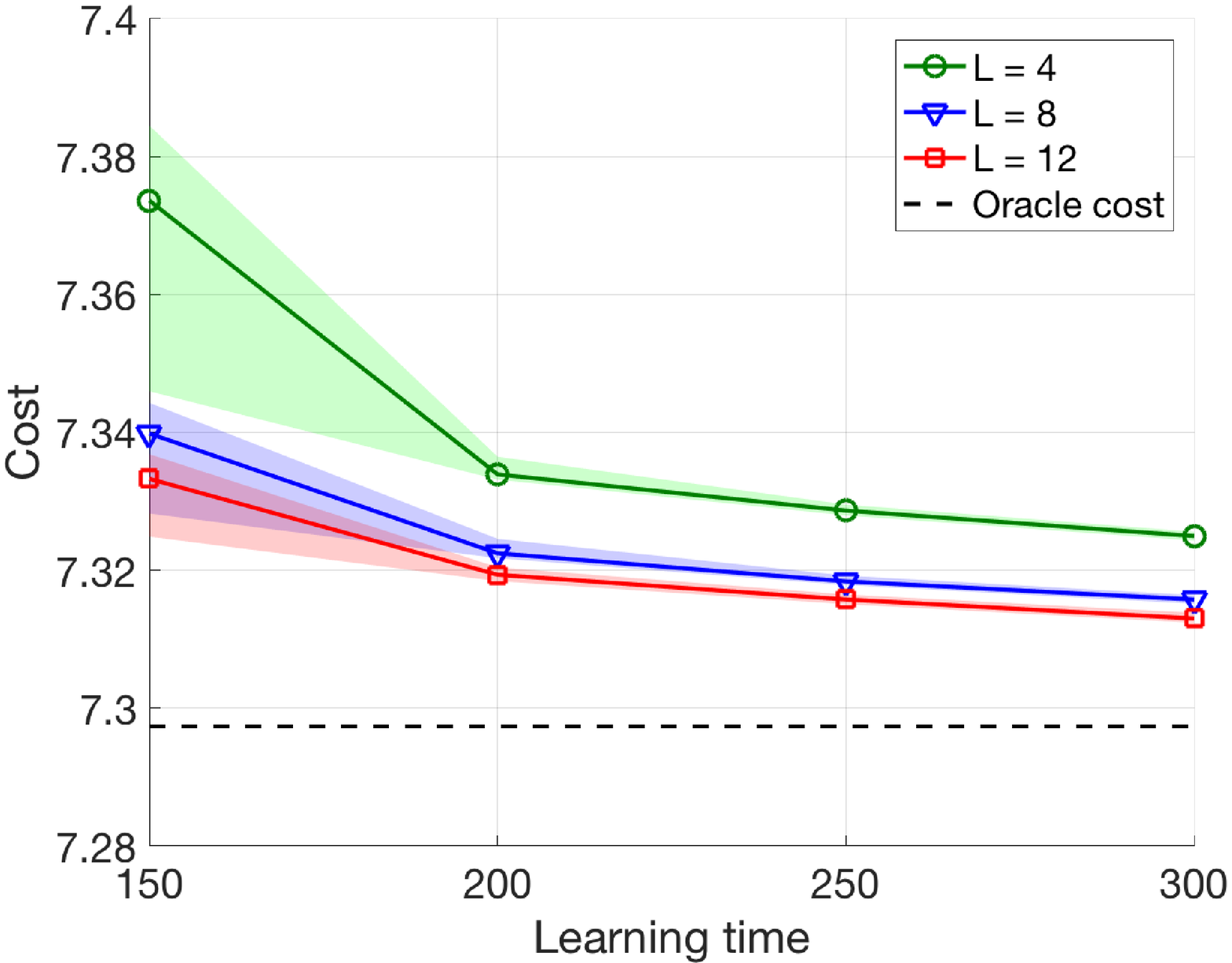}}
	
	\caption{ \footnotesize (a) The true and bootstrapped estimation errors with respect to the learning time. (b) The end-to-end performance of the designed robust distributed controller with respect to learning time and for different FIR lengths. The shaded areas show the quartiles.}
	\label{fig2}
\end{figure*}

Finally, we evaluate the runtime of Algorithm~\ref{alg:solve} for different system dimensions. Consider the same dynamics for the system as before, with $n$ changing from $20$ to $150$. Figure~\ref{fig_runtime} shows the empirical runtime of the proposed algorithm. A $\log$-$\log$ regression yields an empirical time complexity of $\mathcal{O}(n^{1.004})$ for the algorithm, being in line with the theoretical time complexity of the algorithm in Theorem~\ref{thm:runtime}. Finally, it is worthwhile to mention that Algorithm~\ref{alg:solve} is highly parallelizable. In particular, given a machine with $n$ cores, the sub-problems in Algorithm~\ref{alg:solve} can be solved in parallel and, consequently, the complexity of the proposed algorithm becomes \textit{independent} of the system dimension.

\begin{figure}
	\centering
	\includegraphics[width=.5\columnwidth]{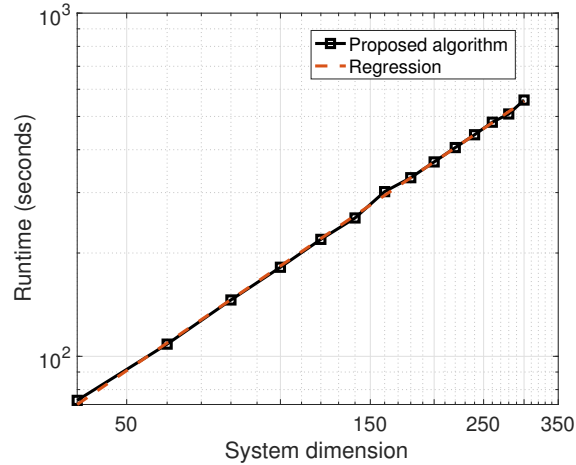}
	\caption{ \footnotesize The empirical runtime of the algorithm with respect to the system dimension (i.e., $n+m$), along with its $\log$-$\log$ regression.}
	\label{fig_runtime}
\end{figure}

\section{Conclusion}
We propose a two-step procedure for designing robust distributed controllers for systems with unknown linear and time-invariant dynamics. Our method first actively probes the system to \textit{learn} a model, and then designs a \textit{robust} distributed controller by taking into account the uncertainty of the learned model.  By taking advantage of recently-developed sparsity-promoting techniques in system identification, together with the localized System Level Synthesis (SLS) framework, we propose the first stabilizing and learning-based distributed controller with guaranteed sub-linear sample complexity and near-linear (constant order if we assume parallel computation) computational complexity. The graceful scalability of the proposed method makes it particularly useful for the control of large-scale and unknown systems with sparse interconnections.

\section*{Acknowledgments}
We are thankful to Javad Lavaei and Han Feng for their insightful comments. This work was supported by the ONR Award N00014-18-1-2526, NSF Award 1808859 and AFSOR Award FA9550-19-1-0055.

	\bibliographystyle{IEEEtran}
	\bibliography{learning-bib,distributed-bib,bib}

\begin{thebibliography}{10}
\providecommand{\url}[1]{#1}
\csname url@samestyle\endcsname
\providecommand{\newblock}{\relax}
\providecommand{\bibinfo}[2]{#2}
\providecommand{\BIBentrySTDinterwordspacing}{\spaceskip=0pt\relax}
\providecommand{\BIBentryALTinterwordstretchfactor}{4}
\providecommand{\BIBentryALTinterwordspacing}{\spaceskip=\fontdimen2\font plus
\BIBentryALTinterwordstretchfactor\fontdimen3\font minus
  \fontdimen4\font\relax}
\providecommand{\BIBforeignlanguage}[2]{{%
\expandafter\ifx\csname l@#1\endcsname\relax
\typeout{** WARNING: IEEEtran.bst: No hyphenation pattern has been}%
\typeout{** loaded for the language `#1'. Using the pattern for}%
\typeout{** the default language instead.}%
\else
\language=\csname l@#1\endcsname
\fi
#2}}
\providecommand{\BIBdecl}{\relax}
\BIBdecl

\bibitem{jordan2015machine}
M.~I. Jordan and T.~M. Mitchell, ``Machine learning: Trends, perspectives, and
  prospects,'' \emph{Science}, vol. 349, no. 6245, pp. 255--260, 2015.

\bibitem{krizhevsky2012imagenet}
A.~Krizhevsky, I.~Sutskever, and G.~E. Hinton, ``Imagenet classification with
  deep convolutional neural networks,'' in \emph{Advances in neural information
  processing systems}, 2012, pp. 1097--1105.

\bibitem{atari-nature}
V.~Mnih, K.~Kavukcuoglu, D.~Silver, A.~A. Rusu, J.~Veness, M.~G. Bellemare,
  A.~Graves, M.~Riedmiller, A.~K. Fidjeland, G.~Ostrovski, S.~Petersen,
  C.~Beattie, A.~Sadik, I.~Antonoglou, H.~King, D.~Kumaran, D.~Wierstra,
  S.~Legg, and D.~Hassabis, ``Human-level control through deep reinforcement
  learning,'' \emph{Nature}, vol. 518, pp. 529--533, 02 2015.

\bibitem{alphago}
D.~Silver, A.~Huang, C.~J. Maddison, A.~Guez, L.~Sifre, G.~van~den Driessche,
  J.~Schrittwieser, I.~Antonoglou, V.~Panneershelvam, M.~Lanctot, S.~Dieleman,
  D.~Grewe, J.~Nham, N.~Kalchbrenner, I.~Sutskever, T.~Lillicrap, M.~Leach,
  K.~Kavukcuoglu, T.~Graepel, and D.~Hassabis, ``Mastering the game of go with
  deep neural networks and tree search,'' \emph{Nature}, vol. 529, pp.
  484--489, 01 2016.

\bibitem{silver2018general}
D.~Silver, T.~Hubert, J.~Schrittwieser, I.~Antonoglou, M.~Lai, A.~Guez,
  M.~Lanctot, L.~Sifre, D.~Kumaran, T.~Graepel \emph{et~al.}, ``A general
  reinforcement learning algorithm that masters chess, shogi, and go through
  self-play,'' \emph{Science}, vol. 362, no. 6419, pp. 1140--1144, 2018.

\bibitem{duan2016benchmarking}
Y.~Duan, X.~Chen, R.~Houthooft, J.~Schulman, and P.~Abbeel, ``Benchmarking deep
  reinforcement learning for continuous control,'' in \emph{International
  Conference on Machine Learning}, 2016, pp. 1329--1338.

\bibitem{openAI}
\BIBentryALTinterwordspacing
OpenAI, M.~Andrychowicz, B.~Baker, M.~Chociej, R.~J{\'{o}}zefowicz, B.~McGrew,
  J.~W. Pachocki, J.~Pachocki, A.~Petron, M.~Plappert, G.~Powell, A.~Ray,
  J.~Schneider, S.~Sidor, J.~Tobin, P.~Welinder, L.~Weng, and W.~Zaremba,
  ``Learning dexterous in-hand manipulation,'' \emph{CoRR}, vol.
  abs/1808.00177, 2018. [Online]. Available:
  \url{http://arxiv.org/abs/1808.00177}
\BIBentrySTDinterwordspacing

\bibitem{Levine16}
S.~Levine, C.~Finn, T.~Darrell, and P.~Abbeel, ``End-to-end training of deep
  visuomotor policies,'' \emph{Journal of Machine Learning Research}, vol.~17,
  no.~1, pp. 1334--1373, Jan. 2016.

\bibitem{bojarski2016end}
M.~Bojarski, D.~Del~Testa, D.~Dworakowski, B.~Firner, B.~Flepp, P.~Goyal, L.~D.
  Jackel, M.~Monfort, U.~Muller, J.~Zhang \emph{et~al.}, ``End to end learning
  for self-driving cars,'' \emph{arXiv preprint arXiv:1604.07316}, 2016.

\bibitem{stulp2012reinforcement}
F.~Stulp, E.~A. Theodorou, and S.~Schaal, ``Reinforcement learning with
  sequences of motion primitives for robust manipulation,'' \emph{IEEE
  Transactions on robotics}, vol.~28, no.~6, pp. 1360--1370, 2012.

\bibitem{recht2019tour}
B.~Recht, ``A tour of reinforcement learning: The view from continuous
  control,'' \emph{Annual Review of Control, Robotics, and Autonomous Systems},
  vol.~2, pp. 253--279, 2019.

\bibitem{dean2017sample}
S.~Dean, H.~Mania, N.~Matni, B.~Recht, and S.~Tu, ``On the sample complexity of
  the linear quadratic regulator,'' \emph{arXiv preprint arXiv:1710.01688},
  2017.

\bibitem{wang2016system}
Y.-S. Wang, N.~Matni, and J.~C. Doyle, ``A system level approach to controller
  synthesis,'' \emph{arXiv preprint arXiv:1610.04815}, 2016.

\bibitem{2006_Rotkowitz_QI_TAC}
M.~Rotkowitz and S.~Lall, ``A characterization of convex problems in
  decentralized control,'' \emph{Automatic Control, IEEE Transactions on},
  vol.~51, no.~2, pp. 274--286, 2006.

\bibitem{wang2014localized}
Y.-S. Wang, N.~Matni, and J.~C. Doyle, ``Localized lqr optimal control,'' in
  \emph{IEEE 53rd Conference on Decision and Control}, 2014, pp. 1661--1668.

\bibitem{wang2017separable}
------, ``Separable and localized system level synthesis for large-scale
  systems,'' \emph{arXiv preprint arXiv:1701.05880}, 2017.

\bibitem{Salar18}
S.~Fattahi and S.~Sojoudi, ``Sample complexity of sparse system identification
  problem,'' \emph{arXiv preprint arXiv:1803.07753v2}, 2018.

\bibitem{fattahi2019learning}
S.~Fattahi, N.~Matni, and S.~Sojoudi, ``Learning sparse dynamical systems from
  a single sample trajectory,'' \emph{arXiv preprint arXiv:1904.09396}, 2019.

\bibitem{1972_Ho_info}
Y.-C. Ho and K.-C. Chu, ``Team decision theory and information structures in
  optimal control problems--part i,'' \emph{Automatic Control, IEEE
  Transactions on}, vol.~17, no.~1, pp. 15--22, 1972.

\bibitem{2012_Mahajan_Info_survey}
A.~Mahajan, N.~Martins, M.~Rotkowitz, and S.~Yuksel, ``Information structures
  in optimal decentralized control,'' in \emph{Decision and Control (CDC), 2012
  IEEE 51st Annual Conference on}, 2012, pp. 1291--1306.

\bibitem{2002_Bamieh_spatially_invariant}
B.~Bamieh, F.~Paganini, and M.~A. Dahleh, ``Distributed control of spatially
  invariant systems,'' \emph{Automatic Control, IEEE Transactions on}, vol.~47,
  no.~7, pp. 1091--1107, 2002.

\bibitem{2005_Bamieh_spatially_invariant}
B.~Bamieh and P.~G. Voulgaris, ``A convex characterization of distributed
  control problems in spatially invariant systems with communication
  constraints,'' \emph{Systems \& Control Letters}, vol.~54, no.~6, pp.
  575--583, 2005.

\bibitem{2013_Nayyar_common_info}
A.~Nayyar, A.~Mahajan, and D.~Teneketzis, ``Decentralized stochastic control
  with partial history sharing: A common information approach,'' \emph{IEEE
  Transactions on Automatic Control}, vol.~58, no.~7, pp. 1644--1658, July
  2013.

\bibitem{1968_Witsenhausen_counterexample}
H.~S. Witsenhausen, ``A counterexample in stochastic optimum control,''
  \emph{SIAM Journal of Control}, vol.~6, no.~1, 1968.

\bibitem{1984_Tsitsiklis_NP_hard}
J.~N. Tsitsiklis and M.~Athans, ``On the complexity of decentralized decision
  making and detection problems,'' in \emph{IEEE Conference on Decision and
  Control (CDC)}, 1984.

\bibitem{2004_QI}
X.~Qi, M.~V. Salapaka, P.~G. Voulgaris, and M.~Khammash, ``Structured optimal
  and robust control with multiple criteria: A convex solution,''
  \emph{Automatic Control, IEEE Transactions on}, vol.~49, no.~10, pp.
  1623--1640, 2004.

\bibitem{2004_Dullerud}
G.~E. Dullerud and R.~D'Andrea, ``Distributed control of heterogeneous
  systems,'' \emph{Automatic Control, IEEE Transactions on}, vol.~49, no.~12,
  pp. 2113--2128, 2004.

\bibitem{2012_Lessard_two_player}
L.~Lessard and S.~Lall, ``Optimal controller synthesis for the decentralized
  two-player problem with output feedback,'' in \emph{2012 IEEE American
  Control Conference (ACC)}, June 2012.

\bibitem{2010_Shah_H2_poset}
P.~Shah and P.~A. Parrilo, ``$\mathcal{H}_2$-optimal decentralized control over
  posets: A state space solution for state-feedback,'' in \emph{Decision and
  Control (CDC), 2010 49th IEEE Conference on}, 2010.

\bibitem{2013_Lamperski_H2}
A.~Lamperski and J.~C. Doyle, ``Output feedback $\mathcal{H}_2$ model matching
  for decentralized systems with delays,'' in \emph{2013 IEEE American Control
  Conference (ACC)}, June 2013.

\bibitem{2013_Lessard_structure}
L.~Lessard, M.~Kristalny, and A.~Rantzer, ``On structured realizability and
  stabilizability of linear systems,'' in \emph{American Control Conference
  (ACC), 2013}, June 2013, pp. 5784--5790.

\bibitem{2013_Scherer_Hinf}
C.~W. Scherer, ``Structured $\mathcal{H}_{\infty}$-optimal control for nested
  interconnections: A state-space solution,'' \emph{Systems and Control
  Letters}, vol.~62, pp. 1105--1113, 2013.

\bibitem{2014_Lessard_Hinf}
\BIBentryALTinterwordspacing
L.~Lessard, ``State-space solution to a minimum-entropy
  $\mathcal{H}_{\infty}$-optimal control problem with a nested information
  constraint,'' in \emph{2014 53rd IEEE Conference on Decision and Control
  (CDC)}, 2014. [Online]. Available: \url{http://arxiv.org/pdf/1403.5020v2.pdf}
\BIBentrySTDinterwordspacing

\bibitem{2014_Matni_Hinf}
\BIBentryALTinterwordspacing
N.~Matni, ``Distributed control subject to delays satisfying an
  $\mathcal{H}_{\infty}$ norm bound,'' in \emph{2014 53rd IEEE Conference on
  Decision and Control (CDC)}, 2014. [Online]. Available:
  \url{http://arxiv.org/pdf/1402.1559.pdf}
\BIBentrySTDinterwordspacing

\bibitem{2014_Tanaka_Triangular}
T.~Tanaka and P.~A. Parrilo, ``Optimal output feedback architecture for
  triangular {LQG} problems,'' in \emph{2014 IEEE American Control Conference
  (ACC)}, June 2014.

\bibitem{2014_Lamperski_state}
A.~Lamperski and L.~Lessard, ``Optimal decentralized state-feedback control
  with sparsity and delays,'' \emph{Automatica}, vol.~58, pp. 143--151, 2015.

\bibitem{fattahi2019transformation}
S.~Fattahi, G.~Fazelnia, J.~Lavaei, and M.~Arcak, ``Transformation of optimal
  centralized controllers into near-globally optimal static distributed
  controllers,'' \emph{IEEE Transactions on Automatic Control}, vol.~64, no.~1,
  pp. 63--77, 2019.

\bibitem{aastrom1971system}
K.~J. {\AA}str{\"o}m and P.~Eykhoff, ``System identification—a survey,''
  \emph{Automatica}, vol.~7, no.~2, pp. 123--162, 1971.

\bibitem{ljung1999system}
L.~Ljung, ``System identification,'' \emph{Wiley Encyclopedia of Electrical and
  Electronics Engineering}, pp. 1--19, 1999.

\bibitem{chen2012identification}
H.-F. Chen and L.~Guo, \emph{Identification and stochastic adaptive
  control}.\hskip 1em plus 0.5em minus 0.4em\relax Springer Science \& Business
  Media, 2012, original work published 1991.

\bibitem{goodwin1977dynamic}
G.~C. Goodwin and R.~L. Payne, \emph{Dynamic system identification: experiment
  design and data analysis}.\hskip 1em plus 0.5em minus 0.4em\relax Academic
  press, 1977.

\bibitem{simchowitz2018learning}
M.~Simchowitz, H.~Mania, S.~Tu, M.~I. Jordan, and B.~Recht, ``Learning without
  mixing: Towards a sharp analysis of linear system identification,'' in
  \emph{Conference On Learning Theory}, 2018, pp. 439--473.

\bibitem{sarkar2018fast}
T.~Sarkar and A.~Rakhlin, ``How fast can linear dynamical systems be learned?''
  \emph{arXiv preprint arXiv:1812.01251}, 2018.

\bibitem{oymak2018non}
S.~Oymak and N.~Ozay, ``Non-asymptotic identification of lti systems from a
  single trajectory,'' \emph{arXiv preprint arXiv:1806.05722}, 2018.

\bibitem{sarkar2019finite}
T.~Sarkar, A.~Rakhlin, and M.~A. Dahleh, ``Finite-time system identification
  for partially observed lti systems of unknown order,'' \emph{arXiv preprint
  arXiv:1902.01848}, 2019.

\bibitem{tsiamis2019finite}
A.~Tsiamis and G.~J. Pappas, ``Finite sample analysis of stochastic system
  identification,'' \emph{arXiv preprint arXiv:1903.09122}, 2019.

\bibitem{simchowitz2019learning}
M.~Simchowitz, R.~Boczar, and B.~Recht, ``Learning linear dynamical systems
  with semi-parametric least squares,'' \emph{arXiv preprint arXiv:1902.00768},
  2019.

\bibitem{fattahi2018data}
S.~Fattahi and S.~Sojoudi, ``Data-driven sparse system identification,'' in
  \emph{2018 56th Annual Allerton Conference on Communication, Control, and
  Computing (Allerton)}.\hskip 1em plus 0.5em minus 0.4em\relax IEEE, 2018, pp.
  462--469.

\bibitem{pereira2010learning}
J.~Pereira, M.~Ibrahimi, and A.~Montanari, ``Learning networks of stochastic
  differential equations,'' in \emph{Advances in Neural Information Processing
  Systems}, 2010, pp. 172--180.

\bibitem{kalman1958design}
R.~E. Kalman, ``Design of self-optimizing control system,'' \emph{Trans. ASME},
  vol.~80, pp. 468--478, 1958.

\bibitem{aastrom1973self}
K.~J. {\AA}str{\"o}m and B.~Wittenmark, ``On self tuning regulators,''
  \emph{Automatica}, vol.~9, no.~2, pp. 185--199, 1973.

\bibitem{abbasi2011regret}
Y.~Abbasi-Yadkori and C.~Szepesv{\'a}ri, ``Regret bounds for the adaptive
  control of linear quadratic systems,'' in \emph{Proceedings of the 24th
  Annual Conference on Learning Theory}, 2011, pp. 1--26.

\bibitem{russo17}
D.~J. Russo, B.~Van~Roy, A.~Kazerouni, I.~Osband, and Z.~Wen, ``A tutorial on
  thompson sampling,'' \emph{Foundations and Trends on Machine Learning},
  vol.~11, no.~1, pp. 1--96, Jul. 2018.

\bibitem{abeille18}
M.~Abeille and A.~Lazaric, ``Improved regret bounds for thompson sampling in
  linear quadratic control problems,'' in \emph{International Conference on
  Machine Learning}, 2018, pp. 1--9.

\bibitem{ouyang17}
Y.~{Ouyang}, M.~{Gagrani}, and R.~{Jain}, ``Control of unknown linear systems
  with thompson sampling,'' in \emph{2017 55th Annual Allerton Conference on
  Communication, Control, and Computing (Allerton)}, Oct 2017, pp. 1198--1205.

\bibitem{abbasi2019model}
Y.~Abbasi-Yadkori, N.~Lazic, and C.~Szepesv{\'a}ri, ``Model-free linear
  quadratic control via reduction to expert prediction,'' in \emph{The 22nd
  International Conference on Artificial Intelligence and Statistics}, 2019,
  pp. 3108--3117.

\bibitem{dean2018regret}
S.~Dean, H.~Mania, N.~Matni, B.~Recht, and S.~Tu, ``Regret bounds for robust
  adaptive control of the linear quadratic regulator,'' in \emph{Advances in
  Neural Information Processing Systems}, 2018, pp. 4188--4197.

\bibitem{mania2019certainty}
H.~Mania, S.~Tu, and B.~Recht, ``Certainty equivalent control of lqr is
  efficient,'' \emph{arXiv preprint arXiv:1902.07826}, 2019.

\bibitem{rantzer2018concentration}
A.~Rantzer, ``Concentration bounds for single parameter adaptive control,'' in
  \emph{2018 Annual American Control Conference (ACC)}.\hskip 1em plus 0.5em
  minus 0.4em\relax IEEE, 2018, pp. 1862--1866.

\bibitem{aswani2013provably}
A.~Aswani, H.~Gonzalez, S.~S. Sastry, and C.~Tomlin, ``Provably safe and robust
  learning-based model predictive control,'' \emph{Automatica}, vol.~49, no.~5,
  pp. 1216--1226, 2013.

\bibitem{bertsekas1995dynamic}
D.~P. Bertsekas, D.~P. Bertsekas, D.~P. Bertsekas, and D.~P. Bertsekas,
  \emph{Dynamic programming and optimal control}.\hskip 1em plus 0.5em minus
  0.4em\relax Athena scientific Belmont, MA, 1995, vol.~1, no.~2.

\bibitem{tsitsiklis1985complexity}
J.~Tsitsiklis and M.~Athans, ``On the complexity of decentralized decision
  making and detection problems,'' \emph{IEEE Transactions on Automatic
  Control}, vol.~30, no.~5, pp. 440--446, 1985.

\bibitem{virtual}
N.~Matni, Y.-S. Wang, and J.~Anderson, ``Scalable system level synthesis for
  virtually localizable systems,'' in \emph{IEEE Conference on Decision and
  Control}, 2017.

\bibitem{nesterov1994interior}
Y.~Nesterov and A.~Nemirovskii, \emph{Interior-point polynomial algorithms in
  convex programming}.\hskip 1em plus 0.5em minus 0.4em\relax Siam, 1994,
  vol.~13.

\bibitem{tibshirani1996regression}
R.~Tibshirani, ``Regression shrinkage and selection via the lasso,''
  \emph{Journal of the Royal Statistical Society: Series B (Methodological)},
  vol.~58, no.~1, pp. 267--288, 1996.

\bibitem{donoho2006compressed}
D.~L. Donoho \emph{et~al.}, ``Compressed sensing,'' \emph{IEEE Transactions on
  information theory}, vol.~52, no.~4, pp. 1289--1306, 2006.

\bibitem{elad2010sparse}
M.~Elad, \emph{Sparse and redundant representations: from theory to
  applications in signal and image processing}.\hskip 1em plus 0.5em minus
  0.4em\relax Springer Science \& Business Media, 2010.

\bibitem{wainwright2009sharp}
M.~J. Wainwright, ``Sharp thresholds for high-dimensional and noisy sparsity
  recovery using $\ell _ {1} $-constrained quadratic programming (lasso),''
  \emph{IEEE transactions on information theory}, vol.~55, no.~5, pp.
  2183--2202, 2009.

\bibitem{motee2008optimal}
N.~Motee and A.~Jadbabaie, ``Optimal control of spatially distributed
  systems,'' \emph{IEEE Transactions on Automatic Control}, vol.~53, no.~7, pp.
  1616--1629, 2008.

\bibitem{motee2014sparsity}
N.~Motee and Q.~Sun, ``Sparsity measures for spatially decaying systems,'' in
  \emph{2014 American Control Conference}.\hskip 1em plus 0.5em minus
  0.4em\relax IEEE, 2014, pp. 5459--5464.

\bibitem{boyd2004convex}
S.~Boyd and L.~Vandenberghe, \emph{Convex optimization}.\hskip 1em plus 0.5em
  minus 0.4em\relax Cambridge university press, 2004.

\bibitem{Vavasis2001}
\BIBentryALTinterwordspacing
S.~A. Vavasis, \emph{Complexity theory: quadratic programming}.\hskip 1em plus
  0.5em minus 0.4em\relax Boston, MA: Springer US, 2001, pp. 304--307.
  [Online]. Available: \url{https://doi.org/10.1007/0-306-48332-7_65}
\BIBentrySTDinterwordspacing

\bibitem{golub2012matrix}
G.~H. Golub and C.~F. Van~Loan, \emph{Matrix computations}.\hskip 1em plus
  0.5em minus 0.4em\relax JHU press, 2012, vol.~3.

\bibitem{efron1994introduction}
B.~Efron and R.~J. Tibshirani, \emph{An introduction to the bootstrap}.\hskip
  1em plus 0.5em minus 0.4em\relax CRC press, 1994.

\bibitem{hall2013bootstrap}
P.~Hall, \emph{The Bootstrap and Edgeworth Expansion}.\hskip 1em plus 0.5em
  minus 0.4em\relax Springer Science \& Business Media, 2013.

\bibitem{shao2012jackknife}
J.~Shao and D.~Tu, \emph{{The Jackknife and Bootstrap}}.\hskip 1em plus 0.5em
  minus 0.4em\relax Springer Science \& Business Media, 2012.

\bibitem{aastrom2013adaptive}
K.~J. {\AA}str{\"o}m and B.~Wittenmark, \emph{Adaptive control}.\hskip 1em plus
  0.5em minus 0.4em\relax Courier Corporation, 2013, original work published
  1989.

\bibitem{berkelaar1997optimal}
A.~B. Berkelaar, K.~Roos, and T.~Terlaky, ``The optimal set and optimal
  partition approach to linear and quadratic programming,'' in \emph{Advances
  in Sensitivity Analysis and Parametic Programming}.\hskip 1em plus 0.5em
  minus 0.4em\relax Springer, 1997, pp. 159--202.

\end{thebibliography}
	
	\appendix
	
	\section{Proof of Theorem~\ref{thm_FIR}}\label{p_thm_FIR}
	To prove Theorem~\ref{thm_FIR}, we consider the following operator
	\begin{equation}
	\|\tf G\|_{\E1} = \sup_{z \in \mathbb{T}} \: \norm{\tf G(z)}_1 \:
	\end{equation}
	for every $\tf G\in \mathcal{RH}_\infty$.  
	The next lemma describes useful properties of the above operator.
	
	
	\begin{lemma}\label{lem_useful}
		The following statements hold:
		\begin{itemize}
			\item[1.] (Semi-norm property) The operator $\|\cdot\|_{\E1}$ is a well-defined semi-norm on $\mathcal{RH}_{\infty}$.
			\item[2.] (Sub-multiplicativity) For $\tf G, \tf H \in \mathcal{RH}_{\infty}$, we have $\|\tf G\tf H\|_{\E1}\leq \|\tf G\|_{\E1}\|\tf H\|_{\E1}$.
			\item[3.] (H\"older's Inequality) For $\tf G\in \mathcal{RH}_{\infty}$, we have $\|\tf G\|_{\hinf}\leq\sqrt{\|\tf G\|_{\E1}\|\tf G^\top\|_{\E1}}$.
			\item[4.] For $\tf G\in \mathcal{RH}_{\infty}$, we have $\|\tf G\|_{\hinf}\leq\sqrt{k}{\|\tf G\|_{\E1}}$, where $k$ is the maximum number of nonzero elements in different rows of $\tf G$.
			\item[5.] For $\tf G\in \mathcal{RH}_{\infty}$, we have $\|\tf G\|_{\E1}\leq\sum_{t=0}^{\infty}\|G(t)\|_{1}$.
		\end{itemize}
	\end{lemma}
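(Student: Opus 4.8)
The plan is to derive all five items from \emph{pointwise} inequalities about the matrices $\tf G(z)$ at a fixed $z\in\mathbb{T}$, and then pass to the supremum over the unit circle. The elementary facts I will invoke are: $\|\cdot\|_1$ (the induced $1$-norm) is a submultiplicative matrix norm; the interpolation bound $\|M\|_2\le\sqrt{\|M\|_1\,\|M^{\top}\|_1}$ holds for every matrix $M$; since $\tf G\in\mathcal{RH}_\infty$ the map $z\mapsto\tf G(z)$ is continuous on the compact set $\mathbb{T}$, so $\sup_{z\in\mathbb{T}}\|\tf G(z)\|_1$ is finite and attained; $\|\tf G\|_{\hinf}=\sup_{z\in\mathbb{T}}\|\tf G(z)\|_2$; and $\tf G^{\top}(z)=\tf G(z)^{\top}$ because the scalar $z^{-t}$ commutes with transposition. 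Everything else is bookkeeping of when the supremum may be pulled past an inequality.

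For item~1, well-definedness (finiteness of the operator) is the continuity/compactness remark, and the three semi-norm axioms — non-negativity, absolute homogeneity, subadditivity — hold for $\|\tf G(z)\|_1$ at each $z$ since $\|\cdot\|_1$ is a matrix norm, and they survive the supremum; for subadditivity one writes $\|\tf G(z)+\tf H(z)\|_1\le\|\tf G(z)\|_1+\|\tf H(z)\|_1\le\|\tf G\|_{\E1}+\|\tf H\|_{\E1}$ and then takes the sup over $z$ on the left. (It is in fact a norm, by analytic continuation from $\mathbb{T}$, but only the semi-norm property is needed.) Item~2 is the same device applied to submultiplicativity of $\|\cdot\|_1$: $\|\tf G(z)\tf H(z)\|_1\le\|\tf G(z)\|_1\|\tf H(z)\|_1\le\|\tf G\|_{\E1}\|\tf H\|_{\E1}$ for every $z$, then sup.

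Items~3 and~4 hinge on the interpolation bound, which I will record with a one-line proof: $\|M\|_2^2=\rho(M^{*}M)\le\|M^{*}M\|_1\le\|M^{*}\|_1\|M\|_1=\|M^{\top}\|_1\|M\|_1$, using that the spectral radius is dominated by any induced norm, submultiplicativity, and $\|M^{*}\|_1=\|M^{\top}\|_1$ (since $\|\cdot\|_1$ sees only the moduli $|M_{ij}|$). Taking $M=\tf G(z)$, bounding $\|\tf G(z)\|_1\le\|\tf G\|_{\E1}$ and $\|\tf G(z)^{\top}\|_1=\|\tf G^{\top}(z)\|_1\le\|\tf G^{\top}\|_{\E1}$, and then passing to the supremum over $z$ gives item~3. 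For item~4, the per-row sparsity of $\tf G$ is inherited by every $\tf G(z)$, so $\|\tf G(z)^{\top}\|_1=\max_i\sum_j|\tf G(z)_{ij}|\le k\,\|\tf G(z)\|_\infty\le k\,\|\tf G(z)\|_1$ (each entry is at most the sum down its column, hence at most $\|\tf G(z)\|_1$; here $\|\cdot\|_\infty$ denotes the maximum-modulus norm as in the paper's notation); combining with the interpolation bound gives $\|\tf G(z)\|_2\le\sqrt{k}\,\|\tf G(z)\|_1$, and a final sup over $z$ yields $\|\tf G\|_{\hinf}\le\sqrt{k}\,\|\tf G\|_{\E1}$.

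Item~5 is immediate from the triangle inequality: for $z\in\mathbb{T}$, $\|\tf G(z)\|_1=\bigl\|\sum_{t\ge0}G(t)z^{-t}\bigr\|_1\le\sum_{t\ge0}\|G(t)z^{-t}\|_1=\sum_{t\ge0}\|G(t)\|_1$ since $|z^{-t}|=1$, and this bound is uniform in $z$, so it holds for the supremum as well (the statement being vacuous if the right-hand side diverges, though $\tf G\in\mathcal{RH}_\infty$ forces geometric decay of $G(t)$ and hence convergence). I do not anticipate a genuine obstacle here: the only non-routine ingredient is the operator-norm interpolation inequality $\|M\|_2\le\sqrt{\|M\|_1\|M^{\top}\|_1}$ behind items~3--4, and the single point requiring care is keeping the transfer matrix $\tf G^{\top}$ identified with the pointwise transpose $\tf G(z)^{\top}$ and observing that the sparsity pattern of $\tf G$ is inherited by each evaluation $\tf G(z)$.
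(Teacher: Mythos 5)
Your proof is correct and follows essentially the same route as the paper's: reduce each item to a pointwise matrix-norm inequality for $\tf G(z)$ at fixed $z\in\mathbb{T}$ (submultiplicativity of the induced $1$-norm, the interpolation bound $\|M\|_2\le\sqrt{\|M\|_1\|M^{\top}\|_1}$, and the sparsity bound relating row and column sums) and then pass to the supremum over the unit circle. You additionally supply the short spectral-radius proof of the interpolation inequality that the paper only asserts, and your sparsity estimate $\|\tf G(z)^{\top}\|_1\le k\,\|\tf G(z)\|_1$ is stated in the direction actually needed to conclude item~4 (the paper's intermediate fact $(iii)$ has the roles of $\tf G$ and $\tf G^{\top}$ written the other way around), so no changes are needed.
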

	\begin{proof}
		The first statement follows immediately from the definition of $\|\cdot\|_{\E1}$. Consider the following properties of the induced norms for matrices:
		\begin{itemize}
			\item[$i.$] $\|\tf G(z)\tf H(z)\|_1\leq \|\tf G(z)\|_1\|\tf H(z)\|_1$ for every $z\in\mathbb{T}$.
			\item[$ii.$] $\|\tf G(z)\|_2\leq \sqrt{\|\tf G(z)\|_1\|\tf G(z)^\top\|_1}$ for every $z\in\mathbb{T}$.
			\item[$iii.$] $\|\tf G(z)\|_1\leq k\|\tf G(z)^\top\|_1$ for every $z\in\mathbb{T}$.
		\end{itemize}
		The second, third, and forth statements of the lemma are followed respectively from $(i)$, $(ii)$, and $(iii)$ combined with $(ii)$, respectively. To show the validity of the last statement, note that
		\begin{equation}
		\|\tf G\|_{\E1}\leq\sup_{z \in \mathbb{T}}\left\|\sum_{t=0}^{\infty}G(t)z^{-t}\right\|_1\leq\sup_{z \in \mathbb{T}}\sum_{t=0}^{\infty}\left\|G(t)z^{-t}\right\|_1\leq \sum_{t=0}^{\infty}\left\|G(t)\right\|_1
		\end{equation}
	\end{proof}
	We provide the proof for Theorem~\ref{thm_FIR} in two steps:
	\begin{itemize}
		\item[1.] We derive conditions under which a feasible solution to~\eqref{opt_f} can be constructed based on the optimal solution of the oracle optimization.
		\item[3.] We derive the gap between the cost of the designed feasible solution and the oracle cost in terms of $\bar{\epsilon}$ and $L$. The obtained gap will be used to derive an upper bound on the optimality gap of the synthesized distributed controller. 
	\end{itemize}
	The following Lemma characterizes a feasible solution to~\eqref{opt_f} based on the system responses of the oracle controller.
	\begin{lemma}\label{lem_feas_FIR}
		Suppose that
		\begin{equation}\label{lower}
		\bar\epsilon<\frac{(1-\rho_\star)\min\{\alpha,1-\alpha\}}{16C_\star\rho_\star} k^{-2}, \qquad L> \frac{2\log(k)+\log\left(\frac{2\sqrt{2}(\|\trueA\|_\infty+\|\trueB\|_\infty)}{1-\alpha}\right)}{1-\rho_\star}
		\end{equation}
		and that $(\hat{A}, \hat{B})$ has the same sparsity as $(A, B)$. Then,
		\begin{subequations}
			\begin{align}
			&\tilde{\Phi}_x(t) = \Phi^\star_x(t),\qquad t=1,\dots,L\\
			&\tilde{\Phi}_u(t) = \Phi^\star_u(t),\qquad t=1,\dots,L\\
			&\tilde{V}(t) = \left\{
			\begin{array}{ll}
			0 & \text{if}\quad t=0\\
			-\Delta_A\Phi_x^\star(t)-\Delta_B\Phi_u^\star(t) & \text{if}\quad t=1,\dots,L-1\\
			-\hat{A}\Phi^\star_x(L)-\hat{B}\Phi^\star_u(L) & \text{if}\quad t = L
			\end{array} 
			\right.\\
			&\tilde{\gamma} = \frac{2C_\star\rho_\star}{1-\rho_\star}\left(\frac{1}{\alpha} k^{3/2}+\frac{2\sqrt{2}}{1-\alpha}k^2\right)\bar\epsilon+\frac{\sqrt{2}}{1-\alpha}\cdot (\|\trueA\|_\infty+\|\trueB\|_\infty)C_\star k^2\rho_\star^L,
			\end{align}
		\end{subequations}
		is feasible for~\eqref{opt_f}.
	\end{lemma}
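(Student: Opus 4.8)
The plan is to verify directly that the tuple $(\tilde\Phi_x,\tilde\Phi_u,\tilde V,\tilde\gamma)$ satisfies every constraint of~\eqref{opt_f}, leaning on three properties of the oracle solution: it obeys the recursion $\Phi^\star_x(t+1)=\trueA\Phi^\star_x(t)+\trueB\Phi^\star_u(t)$ with $\Phi^\star_x(1)=I$ (from~\eqref{eq22} and~\eqref{eq:time-achievability}), it respects the structural constraints $\Phi^\star_x(t)\in\cC_x(t)$, $\Phi^\star_u(t)\in\cC_u(t)$, and its spectral components decay as in~\eqref{stab_opt}.

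The affine and structural constraints require only substitution. Since $\hat A-\Delta_A=\trueA$ and $\hat B-\Delta_B=\trueB$, for $1\le t\le L-1$ we have $\hat A\tilde\Phi_x(t)+\hat B\tilde\Phi_u(t)+\tilde V(t)=(\hat A-\Delta_A)\Phi^\star_x(t)+(\hat B-\Delta_B)\Phi^\star_u(t)=\Phi^\star_x(t+1)=\tilde\Phi_x(t+1)$, which is~\eqref{opt_f_2}; $\tilde V(0)=0$ together with $\Phi^\star_x(1)=I$ gives~\eqref{opt_f_1}; and the definition of $\tilde V(L)$ makes~\eqref{opt_f_3} hold. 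Constraint~\eqref{opt_f_6} is immediate because $\tilde\Phi_x(t)=\Phi^\star_x(t)$ and $\tilde\Phi_u(t)=\Phi^\star_u(t)$ are oracle-feasible. For~\eqref{opt_f_7} I would invoke Lemma~\ref{lem_Cv}: as $(\hat A,\hat B)$ and $(\trueA,\trueB)$ share a sparsity pattern, $\mathrm{supp}(\Delta_A)\subseteq\mathrm{supp}(\hat A)$ and $\mathrm{supp}(\Delta_B)\subseteq\mathrm{supp}(\hat B)$, so for $1\le t\le L-1$ the two summands of $\tilde V(t)=-\Delta_A\Phi^\star_x(t)-\Delta_B\Phi^\star_u(t)$ have supports contained in $\mathrm{supp}(P_1P_2)$ and $\mathrm{supp}(P_3P_4)$ respectively, hence $\tilde V(t)\in\cC_v(t)$; the same argument applies to $\tilde V(L)=-\hat A\Phi^\star_x(L)-\hat B\Phi^\star_u(L)$, and $\tilde V(0)=0$ satisfies~\eqref{opt_f_7} trivially.

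The substantive work is verifying the two induced-norm constraints for the stated $\tilde\gamma$. For~\eqref{opt_f_4}, each column of $[\Phi^\star_x(t)^\top\ \Phi^\star_u(t)^\top]^\top$ has at most $k_\phi\le k$ nonzeros, each of magnitude at most $C_\star\rho_\star^t$ by~\eqref{stab_opt}, so its $\ell_1$-norm is at most $k_\phi C_\star\rho_\star^t$; summing the geometric series, $\sum_{t=1}^{L}\|[\bar\epsilon\Phi^\star_x(t);\bar\epsilon\Phi^\star_u(t)]_{:,j}\|_1\le\bar\epsilon k_\phi C_\star\rho_\star/(1-\rho_\star)$, which is at most $\alpha k_\phi^{-1/2}\tilde\gamma$ since the first summand of $\tilde\gamma$ is $\tfrac{2}{\alpha}\tfrac{C_\star\rho_\star}{1-\rho_\star}k^{3/2}\bar\epsilon$. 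For~\eqref{opt_f_5}, $\tilde V(0)=0$ contributes nothing; for $1\le t\le L-1$ I would bound $\|[\tilde V(t)]_{:,j}\|_1\le\|\Delta_A\|_1\|[\Phi^\star_x(t)]_{:,j}\|_1+\|\Delta_B\|_1\|[\Phi^\star_u(t)]_{:,j}\|_1$, using that $\Delta_A,\Delta_B$ are sparse with spectral norm at most $\bar\epsilon$ (so their induced $1$-norms are bounded by a constant multiple of $k\bar\epsilon$) and $\|[\Phi^\star_x(t)]_{:,j}\|_1\le kC_\star\rho_\star^t$, producing a per-column bound of order $k^2\bar\epsilon C_\star\rho_\star^t$; for $t=L$ I would write $\tilde V(L)=-[\hat A\ \hat B][\Phi^\star_x(L);\Phi^\star_u(L)]$ and bound $\|[\hat A\ \hat B]\|_1$ by a constant multiple of $k(\|\trueA\|_\infty+\|\trueB\|_\infty)$ (up to an $O(\bar\epsilon)$ correction), giving a bound of order $k^2(\|\trueA\|_\infty+\|\trueB\|_\infty)C_\star\rho_\star^L$. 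Summing over $t$ and accounting for the $k_v$-normalization on the right-hand side of~\eqref{opt_f_5}, these two contributions are absorbed by the $\tfrac{2\sqrt2}{1-\alpha}\tfrac{C_\star\rho_\star}{1-\rho_\star}k^2\bar\epsilon$ and $\tfrac{\sqrt2}{1-\alpha}(\|\trueA\|_\infty+\|\trueB\|_\infty)C_\star k^2\rho_\star^L$ summands of $\tilde\gamma$ respectively; this is the step that pins down the precise constants. Finally, to see $\tilde\gamma\in[0,1)$ I would substitute~\eqref{lower}: the bound on $\bar\epsilon$ makes the two $\bar\epsilon$-linear summands of $\tilde\gamma$ strictly smaller than fixed absolute constants, and the lower bound on $L$ (via $\rho_\star^L\le e^{-L(1-\rho_\star)}$) makes $k^2\rho_\star^L$ small enough that the truncation summand is controlled, so the total stays below $1$.

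I expect the main obstacle to be exactly this bookkeeping of sparsity constants in the third step: keeping $k$, $k_\phi$, $k_v$ and the $2k^2$ support bound of Lemma~\ref{lem_Cv} in their correct places, and choosing the right chain of norm inequalities (column-wise $\ell_1$ versus spectral versus entrywise) so that the geometric-series estimates land precisely on the numerical constants hard-wired into $\tilde\gamma$ — and in particular handling the $t=L$ boundary term cleanly, since it is the ``truncation'' contribution (scaling with $\|\trueA\|_\infty+\|\trueB\|_\infty$ and $\rho_\star^L$ rather than with $\bar\epsilon$) and must be estimated without introducing any dependence on the ambient dimension $n$.
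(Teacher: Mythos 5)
Your overall strategy is the one the paper itself follows: substitute the truncated oracle responses, verify the affine, structural, and norm constraints one at a time, and control everything through the decay bound~\eqref{stab_opt}, geometric-series summation, and the sparsity counts $k$, $k_\phi$, $k_v$. Your treatment of~\eqref{opt_f_1}--\eqref{opt_f_3}, \eqref{opt_f_6}, \eqref{opt_f_7}, of the constraint~\eqref{opt_f_4}, and of $\tilde\gamma<1$ is correct and matches the paper's argument.

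The gap sits exactly where you predicted it, and you did not close it: constraint~\eqref{opt_f_5}. The chain you propose, $\|[\tilde V(t)]_{:,j}\|_1\le\|\Delta_A\|_1\,\|[\Phi^\star_x(t)]_{:,j}\|_1+\|\Delta_B\|_1\,\|[\Phi^\star_u(t)]_{:,j}\|_1$ with $\|\Delta_A\|_1=O(k\bar\epsilon)$ and $\|[\Phi^\star_x(t)]_{:,j}\|_1\le kC_\star\rho_\star^t$, yields a per-column bound of order $k^2\bar\epsilon C_\star\rho_\star^t$; after summing the geometric series and accounting for the $k_v=\Theta(k^2)$ normalization on the right-hand side, you would need $\tilde\gamma$ to contain a term of order $k^3\bar\epsilon C_\star\rho_\star/(1-\rho_\star)$, whereas the stated $\tilde\gamma$ only supplies $\tfrac{4\sqrt2}{1-\alpha}\tfrac{C_\star\rho_\star}{1-\rho_\star}k^2\bar\epsilon$ --- short by a factor of $k$. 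The same happens at the $t=L$ boundary term, where your $\|[\hat A\ \hat B]\|_1=O\bigl(k(\|\trueA\|_\infty+\|\trueB\|_\infty)\bigr)$ produces $k^3\rho_\star^L$ against the available $k^2\rho_\star^L$. The paper makes the constants land by using the one-factor-of-$k$-cheaper estimates $\|\Delta_A[\Phi^\star_x(t)]_{:,j}\|_1\le\epsilon\,\|[\Phi^\star_x(t)]_{:,j}\|_1$ and $\|\hat A[\Phi^\star_x(L)]_{:,j}\|_1\le\|\hat A\|_\infty\|[\Phi^\star_x(L)]_{:,j}\|_1$, which keep every per-column bound at order $k$ rather than $k^2$, and by measuring~\eqref{opt_f_5} against $(1-\alpha)k_v^{-1/2}\tilde\gamma$ (the form actually used later in~\eqref{hatDelta}) rather than the $k_v^{-1}$ printed in the constraint. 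So while you correctly identified the decisive bookkeeping step, the specific norm chain you sketched does not establish feasibility for the stated $\tilde\gamma$; the proof needs the sharper per-column inequalities above (or a re-scaled $\tilde\gamma$) to go through.
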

	
	\begin{proof}
		To show the feasibility of the proposed solution, first note that~\eqref{lower} results in
		\begin{align}
		\frac{2C_\star\rho_\star}{1-\rho_\star}\left(\frac{1}{\alpha} k^{3/2}+\frac{2\sqrt{2}}{1-\alpha}k^2\right)\bar\epsilon<1/2,\quad \frac{\sqrt{2}}{1-\alpha}\cdot (\|\trueA\|_\infty+\|\trueB\|_\infty)C_\star k^2\rho_\star^L<1/2
		\end{align}
		where, in the second inequality, we used the relation $-\log(\rho_*)\geq 1-\rho_\star$. This implies that $\tilde{\gamma}<1$. Furthermore, the definition of $(\tilde{\Phi}_x(t), \tilde{\Phi}_u(t),\tilde{V}(t))$ can be used to show that the constraints~\eqref{opt_f_1},~\eqref{opt_f_2},~\eqref{opt_f_3},~\eqref{opt_f_6},~\eqref{opt_f_7} are satisfied. It remains to show the feasibility of~\eqref{opt_f_4} and~\eqref{opt_f_5}. One can write
		\begin{sloppypar}
			\begin{align}\nonumber
			\max_{j}\sum_{t=0}^{L}\|\tilde V_{:,j}(t)\|_1 \leq& (\|\hat{A}\|_\infty\|\Phi_x^\star(L)\|_1\!+\!\|\hat{B}\|_\infty\|\Phi_u^\star(L)\|_1)\!+\!\sum_{t=1}^{L-1}\epsilon\left(\|\Phi_x^\star(t)\|_1+\|\Phi_x^\star(t)\|_1\right)\nonumber\\
			\leq&(\|\trueA\|_\infty+\|\trueB\|_\infty+2\bar\epsilon)kC_\star\rho_\star^L+\frac{2C_\star\rho_\star}{1-\rho_\star}k\bar\epsilon\nonumber\\
			\leq& (\|\trueA\|_\infty+\|\trueB\|_\infty)kC_\star\rho_\star^L+\frac{4C_\star\rho_\star}{1-\rho_\star}k\bar\epsilon\nonumber\\
			\leq&\frac{1-\alpha}{\sqrt{2}}k^{-1}\tilde\gamma\nonumber\\
			\leq& (1-\alpha)k_v^{-1/2}\tilde{\gamma}
			\end{align}
		\end{sloppypar}
		where, in the last inequality, we used the fact that $k_v\leq 2k^2$. Similarly, we have
		\begin{align}
		\sum_{t=1}^{L}\left\|\begin{bmatrix}
		\bar\epsilon\Phi_x(t)\\
		\bar\epsilon\Phi_u(t)
		\end{bmatrix}_{:,j}\right\|_1&\leq\left(\sum_{t=1}^{L}\|\Phi^\star_x(t)\|_1+\|\Phi^\star_u(t)\|_1\right)\bar\epsilon\nonumber\\
		&\leq\frac{2C_\star\rho_\star}{1-\rho_\star}k\bar\epsilon\nonumber\\
		&\leq \alpha k^{-1/2}\tilde{\gamma}\nonumber\\
		&\leq \alpha k_\phi^{-1/2}\tilde{\gamma}
		\end{align}
		where we used the fact that $k_\phi\leq k$. This completes the proof.
	\end{proof}
	Now we are ready to present the proof of Theorem~\ref{thm_FIR}.
	
	\begin{sloppypar}
		\textit{Proof of Theorem~\ref{thm_FIR}:}  Let $\left(\gamma^L, \left\{{\Phi}^L_x(t)\right\}, \left\{{\Phi}^L_u(t)\right\}, \left\{V^L(t)\right\}\right)$ be the optimal solution of~\eqref{opt_f}. Consider the transfer functions $\bPhi^L_x = \sum_{t=1}^{L}\Phi^L_x(t)z^{-t}$, $\bPhi^L_u = \sum_{t=1}^{L}\Phi^L_u(t)z^{-t}$, and $\bV^L = \sum_{t=0}^{L}V^L(t)z^{-t}$.
		Define ${\bDelta}^{L} = \Delta_A{\bPhi}^{L}_x+\Delta_B{\bPhi}^{L}_u+{\bV}^{L}$. One can easily verify that
	\end{sloppypar}
	\begin{align}
	\begin{bmatrix}
	zI-\trueA & -\trueB
	\end{bmatrix}\begin{bmatrix}
	{\bPhi}^{L}_x\\
	{\bPhi}^{L}_u
	\end{bmatrix} = I+{\bDelta}^{L}
	\end{align}
	Now, we show that $\|{\bDelta}^{L}\|_{\Hinf}<1$. To this end, we write
	\begin{align}\label{hatDelta}
	\|{\bDelta}^{L}\|_{\Hinf}&\leq \|\Delta_A{\bPhi}^{L}_x+\Delta_B{\bPhi}^{L}_u\|_{\Hinf}+\|{\bV}^{L}\|_{\Hinf}\nonumber\\
	&\leq \left\|\begin{bmatrix}
	\frac{\Delta_A}{\bar\epsilon} & \frac{\Delta_B}{\bar\epsilon}
	\end{bmatrix}\right\|_{2}\left\|\begin{bmatrix}
	\bar\epsilon{\bPhi}^{L}_x\\
	\bar\epsilon{\bPhi}^{L}_u
	\end{bmatrix}\right\|_{\Hinf}+\|{\bV}^{L}\|_{\Hinf}\nonumber\\
	&\overset{(a)}{\leq}\!\left({\left\|\begin{bmatrix}
		\bar\epsilon{\bPhi}^{L}_x\\
		\bar\epsilon{\bPhi}^{L}_u
		\end{bmatrix}\right\|_{\E1}\!\left\|\begin{bmatrix}
		\bar\epsilon{\bPhi}^{L}_x\\
		\bar\epsilon{\bPhi}^{L}_u
		\end{bmatrix}^\top\right\|_{\E1}}\right)^{1/2}+\!\left({\|{\bV}^{L}\|_{\E1}\!\|{\bV^{L}}^\top\|_{\E1}}\right)^{1/2}\nonumber\\
	&\overset{(b)}{\leq} k_{\phi}^{1/2}\left\|\begin{bmatrix}
	\bar\epsilon{\bPhi}^{L}_x\\
	\bar\epsilon{\bPhi}^{L}_u
	\end{bmatrix}\right\|_{\E1}+k_v^{1/2}\|{\bV^{L}}\|_{\E1}\nonumber\\
	&\overset{(c)}{\leq} k_{\phi}^{1/2}\max_{j}\left\{\sum_{t=1}^{L}\left\|\begin{bmatrix}
	\epsilon\Phi^L_x(t)\\
	\epsilon\Phi^L_x(t)
	\end{bmatrix}_{:,j}\right\|_1\right\}+k_v^{1/2}\max_{j}\left\{\|V^L_{:,j}(t)\|_1\right\}\nonumber\\
	&\leq\alpha{\gamma}^L+(1-\alpha){\gamma}^L\nonumber\\
	&={\gamma}^L<1
	\end{align} 
	where $(a)$, $(b)$, and $(c)$ are due to Lemma~\ref{lem_useful} and the fact that the maximum number of nonzero elements in different rows of $\begin{bmatrix}
	\Phi_x^L(t)^\top & \Phi_u^L(t)^\top
	\end{bmatrix}^\top$ and $V^L(t)$ is upper bounded by $k_{\phi}$ and $k_v$, respectively. Together with Theorem~\ref{thm:robust}, this implies that the derived controller $\bK^L = \bPhi^L_u{\bPhi^L_x}^{-1}$ stabilizes the true system. The rest of the proof is devoted to verifying the optimality gap for the designed controller $\bK^L$. Based on~\eqref{hatDelta} and Lemma~\ref{lem_cost}, one can write
	\begin{align}
	J(\trueA,\trueB,{\bK}^{L}) &= \left\|\begin{bmatrix}
	Q^{1/2} & 0\\
	0 & R^{1/2}
	\end{bmatrix}\begin{bmatrix}
	{\bPhi}^{L}_x\\
	{\bPhi}^{L}_u
	\end{bmatrix}(I+\bDelta^{L})^{-1}\right\|_{\H2}\nonumber\\
	&\leq\frac{1}{1-\|{\bDelta}^{L}\|_{\Hinf}}\left\|\begin{bmatrix}
	Q^{1/2} & 0\\
	0 & R^{1/2}
	\end{bmatrix}\begin{bmatrix}
	{\bPhi}^{L}_x\\
	{\bPhi}^{L}_u
	\end{bmatrix}\right\|_{\H2}\nonumber\\
	&\leq\frac{1}{1-{\gamma}^{L}}\left\|\begin{bmatrix}
	Q^{1/2} & 0\\
	0 & R^{1/2}
	\end{bmatrix}\begin{bmatrix}
	{\bPhi}^{L}_x\\
	{\bPhi}^{L}_u
	\end{bmatrix}\right\|_{\H2}
	\end{align}
	Now, consider the transfer functions $\tilde \bPhi_x = \sum_{t=1}^{L}\tilde \Phi_x(t)z^{-t}$ and $\tilde \bPhi_u = \sum_{t=1}^{L}\tilde \Phi_u(t)z^{-t}$, where $\tilde\Phi_x(t)$ and $\tilde\Phi_u(t)$ are defined in Lemma~\ref{lem_feas_FIR}. 
	One can write
	\begin{align}
	\frac{1}{1-{\gamma}^{L}}\left\|\begin{bmatrix}
	Q^{1/2} & 0\\
	0 & R^{1/2}
	\end{bmatrix}\begin{bmatrix}
	{\bPhi}^{L}_x\\
	{\bPhi}^{L}_u
	\end{bmatrix}\right\|_{\H2}&\leq \frac{1}{1-\tilde{\gamma}}\left\|\begin{bmatrix}
	Q^{1/2} & 0\\
	0 & R^{1/2}
	\end{bmatrix}\begin{bmatrix}
	\tilde{\bPhi}_x\\
	\tilde{\bPhi}_u
	\end{bmatrix}\right\|_{\H2}\nonumber\\
	&\leq \frac{1}{1-\tilde{\gamma}}J_\star
	\end{align}
	The first inequality is due to the feasibility of $(\tilde\gamma, \tilde{\bPhi}_x, \tilde{\bPhi}_u, \tilde{\bV})$. The second equality is due to the fact that $(\tilde{\bPhi}_x, \tilde{\bPhi}_u)$ are the truncations of the system responses when $\trueK$ acts on the true system to their first $L$ time steps. This implies that
	\begin{equation}
	\frac{J(A,B,{\bK}^{L})-J_\star}{J_\star}\leq \frac{1}{1-\tilde{\gamma}}-1
	\end{equation}
	It remains to obtain an upper bound on the right hand side of the above inequality. We have
	\begin{align}\label{final_bound}
	\frac{1}{1-\tilde{\gamma}}\!-\!1&\!\leq\! \frac{1}{1\!-\!\left(\underbrace{\frac{2C_\star\rho_\star}{1-\rho_\star}\left(\frac{1}{\alpha} k^{3/2}+\frac{2\sqrt{2}}{1-\alpha}k^2\right)\bar\epsilon}_{e_1}\!+\!\underbrace{\frac{\sqrt{2}}{1-\alpha} (\|\trueA\|_\infty\!+\!\|\trueB\|_\infty)C_\star k^2\!\rho_\star^L}_{e_2}\right)}\!-\!1\nonumber\\
	&= \frac{e_1+e_2}{1-e_1-e_2}
	\end{align}
	Using~\eqref{cond}, it is easy to verify that
	we have $e_1\leq 1/4$ and $e_2\leq 1/4$. This implies that 
	\begin{align}
	\frac{J(A,B,{\bK}^{L})-J_\star}{J_\star}\leq 2(e_1+e_2)
	\end{align}
	Plugging back the definitions of $e_1$ and $e_2$, together with some simple algebra completes the proof.$\hfill\square$
	
	\section{Proof of Proposition~\ref{prop_unimodal}}\label{p_prop_unimodal}
	We need a number of lemmas in order to prove this proposition.
	
	\begin{lemma}\label{l_pd}
		Given vectors $a$, $b$, and a positive definite matrix $M$, suppose that $a^\top M a = -a^\top M b = b^\top M b$. Then, we have $a = -b$.
	\end{lemma}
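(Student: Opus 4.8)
The plan is to exploit the fact that a positive definite matrix $M$ induces a genuine inner product $\langle x,y\rangle_M := x^\top M y$, and hence a norm $\|x\|_M := \sqrt{x^\top M x}$ that vanishes only at $x=0$. The three hypothesized equalities say precisely that $a^\top M a$, $-a^\top M b$, and $b^\top M b$ all equal a common scalar, call it $c$. So the natural move is to compute $\|a+b\|_M^2$ directly and watch the terms telescope.

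Concretely, I would expand
\begin{align}
(a+b)^\top M (a+b) &= a^\top M a + a^\top M b + b^\top M a + b^\top M b \nonumber\\
&= a^\top M a + 2\,a^\top M b + b^\top M b,\nonumber
\end{align}
using the symmetry of $M$ to combine the cross terms. Substituting $a^\top M a = c$, $a^\top M b = -c$, and $b^\top M b = c$ from the hypothesis gives $(a+b)^\top M (a+b) = c - 2c + c = 0$.

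The final step is to invoke positive definiteness of $M$: the only vector $x$ with $x^\top M x = 0$ is $x = 0$, so $a + b = 0$, i.e., $a = -b$, as claimed. There is essentially no obstacle here — the only thing to be slightly careful about is writing the cross terms correctly as $2\,a^\top M b$ (which uses $M = M^\top$), after which the cancellation is immediate and the argument closes.
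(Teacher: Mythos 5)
Your proof is correct and follows essentially the same route as the paper: both arguments reduce the hypotheses to the single identity $(a+b)^\top M (a+b) = 0$ and then invoke positive definiteness to conclude $a+b=0$. The only cosmetic difference is that you expand the quadratic form directly and substitute, whereas the paper first derives $a^\top M(a+b)=0$ and $b^\top M(a+b)=0$ and adds them; both steps rely on the (standard) symmetry of $M$ in the same way.
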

	\begin{proof}
		$a^\top M a = -a^\top M b$ and $b^\top M b = -b^\top M a$ imply $a^\top M (a+b) = 0$ and $b^\top M(a+b) = 0$. Combining these equations leads to $(a+b)^\top M (a+b) = 0$. Due to the positive definiteness of $M$, we have $a = -b$. 
	\end{proof}
	
	\begin{lemma}\label{l_qp}
		For every feasible $\gamma$, $g(\gamma)^2$ can be reformulated as the optimal solution of the following QP:
		\begin{subequations}\label{opt_f2}
			\begin{align}
			\min_{x} &\ \frac{1}{2}x^\top M x\\
			\mathrm{s.t.} &\ H_1x \leq h_1 + \gamma \mathbf{1}\\
			&\ H_2x = 0
			\end{align}
		\end{subequations}
		where 
		\begin{itemize}
			\item[-] $x$ is the vectorized concatenation of $\left( \left\{{\Phi}_x(t)\right\}, \left\{{\Phi}_u(t)\right\}\right)$.
			\item[-] $M$ is a positive definite matrix,
			\item[-] $H_1$ and $H_2$ are matrices that only depend on $(\hat{A}, \hat{B}, \alpha, k)$ and $\cC_v$.
			\item[-] $h_1$ is a vector whose nonzero elements have absolute value greater than 1.
			\item[-] $\mathbf{1}$ is a vector whose elements are equal to 1.
		\end{itemize}
	\end{lemma}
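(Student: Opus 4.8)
The plan is to start from $\mathrm{OPT}(\gamma)$, square the objective, eliminate the slack variables $V(0),\dots,V(L)$, and rewrite the $\ell_1$-type budget constraints as a finite family of linear inequalities, thereby bringing the problem into the asserted QP form. Since the objective of $\mathrm{OPT}(\gamma)$ is nonnegative, $g(\gamma)^2$ is the optimal value of the problem with the same feasible set and objective $\sum_{t=1}^{L}\big(\mathrm{tr}(\Phi_x(t)^\top Q\Phi_x(t))+\mathrm{tr}(\Phi_u(t)^\top R\Phi_u(t))\big)$. Constraints \eqref{opt_f_1}--\eqref{opt_f_3} express each $V(t)$ uniquely as an affine function of $(\Phi_x(1\!:\!L),\Phi_u(1\!:\!L))$, namely $V(0)=\Phi_x(1)-I$, $V(t)=\Phi_x(t+1)-\hat A\Phi_x(t)-\hat B\Phi_u(t)$ for $1\le t\le L-1$, and $V(L)=-\hat A\Phi_x(L)-\hat B\Phi_u(L)$; I would substitute these in. Let $x$ collect the (non-zero-forced) entries of $(\Phi_x(1),\dots,\Phi_x(L),\Phi_u(1),\dots,\Phi_u(L))$. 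The objective then becomes $\tfrac12 x^\top M x$ with $M$ block-diagonal, assembled from Kronecker products of the identity with $Q$ and $R$; since $Q,R\succ0$ this is positive definite, and restricting to the coordinate subspace cut out by the sparsity constraints preserves positive definiteness because every retained coordinate still appears in the objective.

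Next I would handle the equalities. The sparsity memberships $\Phi_x(t)\in\cC_x(t)$, $\Phi_u(t)\in\cC_u(t)$, and $V(t)\in\cC_v(t)$, once the affine forms of the $V(t)$ are plugged in, are linear constraints on $x$. The only candidate for a nonhomogeneous term is $V(0)\in\cC_v(0)=\mathcal S(I_n)$, i.e.\ $(\Phi_x(1)-I)_{ij}=0$ for $i\ne j$; but $I_{ij}=0$ off the diagonal, so this reads $\Phi_x(1)_{ij}=0$, which is homogeneous. Hence all equalities take the form $H_2x=0$, with $H_2$ depending only on $\hat A,\hat B$ (through the expressions for the $V(t)$) and on the imposed sparsity structure (in particular $\cC_v$).

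Then I would convert the inequalities \eqref{opt_f_4}--\eqref{opt_f_5} using $\|v\|_1=\max_{s\in\{\pm1\}^{d}}s^\top v$ and $\sum_t\|v_t\|_1=\max_{(s_t)}\sum_t s_t^\top v_t$. For each column index $j$, \eqref{opt_f_4} is equivalent to the finite family $\bar\epsilon\sum_{t=1}^{L}s_t^\top[\Phi_x(t);\Phi_u(t)]_{:,j}\le \alpha k_{\phi}^{-1/2}\gamma$, one inequality per sign tuple; dividing by $\alpha k_{\phi}^{-1/2}$ normalizes the coefficient of $\gamma$ to $1$ and leaves zero on the constant side. For \eqref{opt_f_5}, substituting the $V(t)$ and expanding gives, for each $j$ and each sign tuple, $\big(\text{linear in }x\big)-\sigma\le(1-\alpha)k_v^{-1}\gamma$, where $-\sigma\in\{\pm1\}$ arises solely from the $-I$ in $V(0)=\Phi_x(1)-I$; here $V(0)\in\cC_v(0)=\mathcal S(I_n)$ forces $V_{:,j}(0)$ to be supported on $\{j\}$, so the relevant sign is a single scalar $\sigma=\pm1$. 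Moving $\sigma$ to the right and dividing by $(1-\alpha)k_v^{-1}$, the constant there becomes $\pm\frac{k_v}{1-\alpha}$, whose absolute value exceeds $1$ because $k_v\ge1$ and $0<\alpha<1$. Stacking all these inequalities over $j$ and over sign tuples yields $H_1x\le h_1+\gamma\mathbf1$, where $H_1$ depends only on $\hat A,\hat B,\bar\epsilon,\alpha,k_{\phi},k_v$ (hence on $(\hat A,\hat B,\alpha,k)$) and the sparsity structure, and where every nonzero entry of $h_1$ has absolute value strictly larger than $1$; combining with the quadratic objective and $H_2x=0$ produces exactly \eqref{opt_f2}.

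The argument is essentially a bookkeeping exercise, and the one step that requires genuine care is the claim that the nonzero entries of $h_1$ have absolute value greater than $1$. This forces me to track precisely where constants can enter after the elimination of $V$ --- they come only through the $-I$ in $V(0)=\Phi_x(1)-I$ --- to invoke $\cC_v(0)=\mathcal S(I_n)$ so that the associated sign reduces to a single $\pm1$, and to carry the normalization factor $k_v/(1-\alpha)>1$ through the scaling correctly. A secondary point I would be careful about is verifying $M\succ0$ rather than merely $M\succeq0$ after dropping the sparsity-forced coordinates, which holds since each surviving coordinate still contributes to the positive definite objective.
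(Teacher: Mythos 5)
Your proposal is correct and follows exactly the route the paper sketches (the paper's proof is a one-line remark: eliminate the slack variables $\{V(t)\}$ via the affine constraints and linearize the $\ell_1$ norms), with the added value that you actually verify the one delicate claim --- that the nonzero entries of $h_1$ arise solely from the $-I$ in $V(0)=\Phi_x(1)-I$ and, after normalizing the $\gamma$-coefficient, equal $\pm k_v/(1-\alpha)$ with absolute value exceeding $1$ --- which is precisely the property invoked later in the proof of Proposition~\ref{prop_unimodal}. The only cosmetic discrepancy is that $H_1$ also depends on $\bar\epsilon$, $Q$, and $R$ enter $M$, but that is an imprecision of the lemma statement rather than of your argument.
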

	
	
	\begin{proof}
		The proof follows after writing the slack variables $\{V(t)\}_{t = 0}^L$ in terms of $\{\Phi_x(t)\}_{t=1}^L$ and $\{\Phi_u(t)\}_{t=1}^L$ and linearizing $\ell_1$ norm. The details are omitted for brevity.
	\end{proof}

	\textit{Proof of Proposition~\ref{prop_unimodal}.} According to Lemma~\ref{l_qp}, $g(\gamma)^2$ is equivalent to~\eqref{opt_f2} which is a strictly convex QP. Therefore, based on the result of~\cite{berkelaar1997optimal}, the optimal solution of~\eqref{opt_f2} is a continuous function of $\gamma$ when it is feasible. Therefore, $g(\gamma)^2$ (and hence $g(\gamma)$) is continuous over the interval $[\gamma_0,1)$. By contradiction, suppose that $\frac{g(\gamma)}{1-\gamma}$ is not unimodal. Then, the quasiconvexity of $\frac{g(\gamma)}{1-\gamma}$ in the interval $[\gamma_0,1)$ implies that there must exist $\underline{\gamma}$ and $\bar{\gamma}$ such that $\gamma_0\leq\underline{\gamma}<\bar{\gamma}<1$ and $\frac{g(\gamma)}{1-\gamma}$ is constant in the interval $[\underline{\gamma},\bar{\gamma}]$. This implies that $g(\gamma) = c(1-\gamma)$ and $g(\gamma)^2 = c^2(1-\gamma)^2$ for some $c$ and every $\gamma\in [\underline{\gamma},\bar{\gamma}]$. Define the active set $I(\gamma)$ as the set of the row indices of $H_1$ corresponding to the active inequalities, i.e., the set of indices $i$ for which we have $(H_1)_{i,:}x = (h_1)_i+\gamma$. Let $H_1[I(\gamma)]$ be the submatrix of $H_1$ after removing the rows not belonging to $I(\gamma)$. Without loss of generality, we assume that the matrix $H[I(\gamma)] = \begin{bmatrix}
	H_2^\top & H_1[I(\gamma)]^\top
	\end{bmatrix}^\top$ is full row rank; otherwise, one can remove the dependent rows of $H[I(\gamma)]$ to reduce it to a full row rank matrix. Now, due to the continuity of $x(\gamma)$, there must exist $\underline{\underline{\gamma}}$ and $\bar{\bar{\gamma}}$ such that $\underline{\gamma}\leq \underline{\underline{\gamma}}< \overline{\overline{\gamma}}\leq {\bar{\gamma}}$ and $I(\gamma)$ remains the same for every $\gamma\in[\underline{\underline{\gamma}}, \overline{\overline{\gamma}}]$. Let $I(\gamma)$ be denoted as $I^*$ within this interval. Then,~\eqref{opt_f2} is reduced to
	\begin{align}
	\min_{x} &\ \frac{1}{2}x^\top M x\label{opt_f3}\\
	\mathrm{s.t.} &\ H[I^*]x = h_3[I^*] + \gamma h_4[I^*]
	\end{align}
	for every $\gamma\in[\underline{\underline{\gamma}}, \overline{\overline{\gamma}}]$, where $h_3[I^*] = \begin{bmatrix}
	0 & h_1[I^*]^\top
	\end{bmatrix}^\top$ and $h_4[I^*] = \begin{bmatrix}
	0 & \mathbf{1}[I^*]^\top
	\end{bmatrix}^\top$. We consider two cases:
	
	\noindent{\bf case 1:} Suppose that $I^*$ is empty. This implies that $h_4[I^*] = 0$ and therefore, $g(\gamma)$ is constant over the interval $[\underline{\underline{\gamma}}, \bar{\bar{\gamma}}]$ which is a contradiction. 
	
	\noindent{\bf case 2:} Suppose that $I^*$ is non-empty and hence, $h_4[I^*] \not= 0$. Due to the feasibility of the affine constraints, strong duality holds. Therefore, by solving the dual of~\eqref{opt_f3}, one can explicitly write the optimal value of~\eqref{opt_f3} in the form of 
	\begin{align}
	g(\gamma)^2 =& \frac{1}{2} (h_3[I^*]+\gamma h_4[I^*])^\top \left(H[I^*]M^{-1}H[I^*]^\top\right)^{-1}(h_3[I^*]+\gamma h_4[I^*])\nonumber\\
	=&\frac{1}{2}\left(h_4[I^*]^\top\left(H[I^*]M^{-1}H[I^*]^\top\right)^{-1} h_4[I^*]\right)\gamma^2\nonumber\\
	&+ \left(h_3[I^*]^\top\left(H[I^*]M^{-1}H[I^*]^\top\right)^{-1} h_4[I^*]\right)\gamma\nonumber\\
	&+\frac{1}{2}\left(h_3[I^*]^\top\left(H[I^*]M^{-1}H[I^*]^\top\right)^{-1} h_3[I^*]\right)
	\end{align}
	Since we assumed that $g(\gamma)^2 = c^2(1-\gamma)^2$ for every $[\underline{\underline{\gamma}}, \bar{\bar{\gamma}}]$, the following equalities must be satisfied:
	\begin{align}\nonumber
	h_4[I^*]^\top\left(H[I^*]M^{-1}H[I^*]^\top\right)^{-1} h_4[I^*] =& -h_3[I^*]^\top\left(H[I^*]M^{-1}H[I^*]^\top\right)^{-1} h_4[I^*]\nonumber\\
	=&\ h_3[I^*]^\top\left(H[I^*]M^{-1}H[I^*]^\top\right)^{-1} h_3[I^*]
	\end{align}
	\begin{sloppypar}
		\noindent Note that $\left(H[I^*]M^{-1}H[I^*]^\top\right)^{-1}$ is positive definite due to the fact that $H[I^*]$ is full row rank. Therefore, Lemma~\ref{l_pd} implies that $h_4[I^*] = -h_3[I^*]$. On the other hand, $h_4[I^*]$ has an element with value 1 due to the assumption that $I^*$ is non-empty. Furthermore, according to Lemma~\ref{l_qp}, none of the elements of $h_4$ have magnitude equal to 1. This contradicts with $h_4[I^*] = -h_3[I^*]$ and completes the proof.$\hfill\square$
	\end{sloppypar}

\section{Proof of Theorem~\ref{thm:runtime}}\label{p_thm:runtime}

	\begin{sloppypar}
		First, we show that the algorithm terminates in $O(L^{3.5}k^7n\log(n)\log(1/\eta_1)\log(1/\eta_2))$ time. Without loss of generality, suppose that $g(1)<+\infty$. Then, the \texttt{while} loop will take at most $\lceil\log(1/\eta_1)\rceil$ iterations to satisfy $|\gamma_c-\gamma_d|\leq \eta_1$ and terminate. On the other hand, at each iteration, one needs to solve $\mathrm{OPT}_1(\gamma_c),\dots,\mathrm{OPT}_n(\gamma_c)$ and $\mathrm{OPT}_1(\gamma_d),\dots,\mathrm{OPT}_n(\gamma_d)$ by solving $2n$ instances of the reduced-QPs introduced in Lemma~\ref{l_reducedQP}. Classical results on the interior methods show that each QP can be solved to $\frac{\eta_2}{n}$-accuracy in $O(L^{3.5}k^7\log(n)\log(1/\mathrm{\eta_2}))$~\cite{boyd2004convex, nesterov1994interior}. Combining these time complexities, one can verify that the algorithm terminates in $O(L^{3.5}k^7n\log(n)\log(1/\eta_1)\log(1/\eta_2))$. 
	\end{sloppypar}
	
	Next, we prove the statements 1 and 2 of the theorem.
	
	\noindent{\underline{Proof of statement 2:}} Suppose that $\gamma_0>1-\underline{\eta_1}/2$. Then, it is easy to verify that $\gamma_a$ and $\gamma_b$ will obtain the following values at the end of the \texttt{while} loop:
	\begin{align}
	\gamma_a = 1-\underline{\eta_1},\quad \gamma_b  =1
	\end{align}
	Therefore, $1-\underline{\eta_1}/2$ will be assigned to $\bar\gamma$ after the line 18 of the algorithm. This implies that $\gamma_0>\bar\gamma$ and $g(\gamma) = +\infty$ due to the definition of $\gamma_0$.
	
	\noindent{\underline{Proof of statement 1:}} An argument similar to the proof of the first statement can be used to show that $g(\bar{\gamma})<+\infty$ at the termination of the algorithm. Next, we show that  we have $\gamma^L\in[\gamma_a,\gamma_b]$ at the end of the \texttt{while} loop. This trivially holds if the interior point method that is used to solve $\mathrm{OPT}_i(\gamma_c)$ and $\mathrm{OPT}_i(\gamma_d)$ could achieve zero optimality gap, i.e., $g_{\mathrm{ap}}(\gamma) = g(\gamma)$ at every iteration. As mentioned before, this may not be the case since the values of $g(\gamma)$ are available only up to a nonzero approximation error. 
	By contradiction, suppose $\gamma^L\not\in[\gamma_a,\gamma_b]$ at the end of the \texttt{while} loop. Together with the unimodal property of $\frac{g(\gamma)}{1-\gamma}$, this implies that one of the following events happens before the line 11 of the algorithm in at least one iteration of the \texttt{while} loop:
	\begin{itemize}
		\item[-] $g(\gamma_c)$ and $g(\gamma_d)$ are finite, $\gamma^L\in[\gamma_d,\gamma_b]$, $\frac{g(\gamma_c)}{1-\gamma_c}\geq\frac{g(\gamma_d)}{1-\gamma_d}$, and $\frac{g_{\mathrm{ap}}(\gamma_c)}{1-\gamma_c}<\frac{g_{\mathrm{ap}}(\gamma_d)}{1-\gamma_d}$
		\item[-] $g(\gamma_c)$ and $g(\gamma_d)$ are finite, $\gamma^L\in[\gamma_a,\gamma_c]$, $\frac{g(\gamma_c)}{1-\gamma_c}<\frac{g(\gamma_d)}{1-\gamma_d}$, and $\frac{g_{\mathrm{ap}}(\gamma_c)}{1-\gamma_c}\geq\frac{g_{\mathrm{ap}}(\gamma_d)}{1-\gamma_d}$
	\end{itemize}
	Suppose the first event occurs. In particular, assume that $g(\gamma_c)$ and $g(\gamma_d)$ are finite, $\gamma^L\in[\gamma_d,\gamma_b]$, and $\frac{g(\gamma_c)}{1-\gamma_c}\geq\frac{g(\gamma_d)}{1-\gamma_d}$. It is easy to see that $\gamma_d-\gamma_c>\Delta_\gamma$ due to the definition of $\Delta_\gamma$ in~\eqref{const}. On the other hand, notice that $[\gamma_c,\gamma_d]\subseteq [\gamma_0,\gamma^L]$ and hence, $\frac{g(\gamma)}{1-\gamma}$ is decreasing in $[\gamma_0,\gamma^L]$. Therefore, we have $\frac{g(\gamma_c)}{1-\gamma_c}\geq \frac{g(\gamma_d)}{1-\gamma_d}+\Delta_g$ due to the definition of $\Delta_g$ in~\eqref{min_diff}. This leads to the following series of inequalities:
	\begin{align}\label{ineq}
	\frac{g_{\mathrm{ap}}(\gamma_c)}{1-\gamma_c}\geq\frac{g(\gamma_c)}{1-\gamma_c}\geq \frac{g(\gamma_d)}{1-\gamma_d}+\Delta_g\geq \frac{g_{\mathrm{ap}}(\gamma_d)}{1-\gamma_d}+\left(\Delta_g-\frac{\eta_2}{1-\gamma_d}\right)
	\end{align}
	\begin{sloppypar}
		\noindent where the first and last inequalities are due to the fact that $g_{\mathrm{ap}}\geq g(\gamma_c)$ and $g_{\mathrm{ap}}(\gamma_d)\leq g(\gamma_d)+\eta_2$, respectively. Furthermore, it is easy to verify that $\gamma_d\leq\left(1-\frac{2}{1+\sqrt{5}}\right)\underline{\eta_1}$. Combining this inequality with the assumption $\eta_2\leq \frac{2}{1+\sqrt{5}}\Delta_g\underline{\eta_1}$ leads to
		\begin{align}
		\Delta_g-\frac{\eta_2}{1-\gamma_d}\geq \Delta_g - \frac{1+\sqrt{5}}{2}\frac{{\eta_2}}{\underline{\eta_1}}\geq 0
		\end{align}
	\end{sloppypar}
	\noindent Together with~\eqref{ineq}, these inequalities result in $\frac{g_{\mathrm{ap}}(\gamma_c)}{1-\gamma_c}\geq \frac{g_{\mathrm{ap}}(\gamma_d)}{1-\gamma_d}$ which is a contradiction. A similar argument can be made to show that the second event does not occur. Therefore, we have $\gamma^L\in[\gamma_a,\gamma_b]$ at the end of the \texttt{while} loop and therefore, $|\bar{\gamma}-\gamma^L|\leq \underline{\eta_1}/2$. It remains to show that~\eqref{upperbound} is valid, provided that $\underline{\eta_1}\leq (1-\gamma^L)^2$. One can write
	\begin{align}
	\frac{g_{\mathrm{ap}}(\bar\gamma)}{1-\bar{\gamma}}-\frac{g(\gamma^L)}{1-\gamma^L}\leq \underbrace{\frac{g(\bar\gamma)}{1-\bar{\gamma}}-\frac{g(\gamma^L)}{1-\gamma^L}}_{(a)}+\underbrace{\frac{\eta_2}{1-\bar{\gamma}}}_{(b)}
	\end{align} 
	We provide separate upper bounds for $(a)$ and $(b)$. One can verify that the following relation holds for $(b)$:
	\begin{align}\label{ineq_b}
	\frac{\eta_2}{1-\bar{\gamma}}\leq \frac{2\eta_2}{\underline{\eta_1}}\leq 2\underline{\eta_1}
	\end{align}
	where the first and second inequalities are due to $\bar{\gamma}\leq 1-\underline{\eta_1}/2$ and the assumption $\eta_2\leq \underline{\eta_1}^2$. Next, we provide an upper bound for $(a)$. One can write
	
	\begin{align}
	\frac{g(\bar\gamma)}{1-\bar{\gamma}}-\frac{g(\gamma^L)}{1-\gamma^L}&\leq g(\gamma_0)\left|\frac{1}{1-\gamma^L+(\gamma^L-\bar{\gamma})}-\frac{1}{1-\gamma^L}\right|\nonumber\\
	&\leq g(\gamma_0)\frac{|\gamma^L-\bar{\gamma}|}{(1-\gamma^L+(\gamma^L-\bar{\gamma}))(1-\gamma^L)}\nonumber\\
	&\leq g(\gamma_0)\frac{\underline{\eta_1}/2}{(1-\gamma^L-\underline{\eta_1}/2)(1-\gamma^L)}\label{ineq2}
	\end{align}
	where $\underline{\eta_1}\leq 2(1-\gamma^L)^2$ is used in the second inequality to ensure that the denominator is positive. On the other hand, we have
	\begin{align}
	1-\gamma^L-\underline{\eta_1}/2\geq 1-\gamma^L - (1-\gamma^L)^2\geq (1-\gamma^L)\gamma^L
	\end{align}
	Combining this inequality with~\eqref{ineq2} results in
	\begin{align}
	\frac{g(\bar\gamma)}{1-\bar{\gamma}}-\frac{g(\gamma^L)}{1-\gamma^L}\leq \frac{g(\gamma_0)}{2(1-\gamma^L)^2\gamma^L}\underline{\eta_1}
	\end{align}
	This completes the proof.$\hfill\square$
	
\end{document}